\documentclass{amsart}
\usepackage[T1]{fontenc}        
\usepackage[utf8]{inputenc}     
\usepackage[english]{babel}
\usepackage{layout}

\usepackage[dvipsnames]{xcolor}

\usepackage{lmodern}            % Latin Modern
\usepackage{microtype}          % améliore la lisibilité du texte
\usepackage{mathptmx}           % Times pour texte et maths, un peu plus gras

\usepackage{amsmath,amssymb,amsfonts,amsthm}
\usepackage{mathrsfs}

\usepackage{graphicx}
\usepackage{layout}

\usepackage[dvipsnames]{xcolor}

\usepackage{amssymb}
\usepackage{fancybox}
\usepackage{fancyhdr}
\usepackage{times}

\usepackage{verbatim}
\usepackage[pdftex,bookmarks,colorlinks,linkcolor=blue]{hyperref}

\usepackage{setspace}
\usepackage{lmodern}
\usepackage{colortbl}

\numberwithin{equation}{section}
\newcommand{\N}{\mathbb{N}}
\newcommand{\R}{\mathbb{R}}
\newcommand{\C}{\mathbb{C}}

\newcommand{\ccA}{{\widetilde{\mathcal{A}}}}

\newcommand{\cF}{{\mathcal{F}}}

\newcommand{\cC}{{\mathcal{C}}}

\newcommand{\cD}{{\mathcal{D}}}

\newcommand{\cK}{{\mathcal{K}}}

\newcommand{\dist}{\mathrm{dist}}

\newcommand{\beqa}{\begin{eqnarray*}}
\newcommand{\bea}{\begin{eqnarray}}
\newcommand{\eeqa}{\end{eqnarray*}}
\newcommand{\eea}{\end{eqnarray}}

\newtheorem{lem}{Lemma}[section]

\newtheorem{cor}{Corollary}[section]

\newtheorem{theo}{Theorem}[section]

\title[Complete interpolating sequences for  Fock type spaces]{Complete interpolating sequences for Fock type spaces}

\author[]{Karim Kellay}
\address{{K. Kellay:} Univ. Bordeaux, CNRS, Bordeaux INP, IMB, UMR 5251, F-33400 Talence, France}
\email{kkellay@math.u-bordeaux.fr}

\author[]{Youssef Omari}
\address{{Y. Omari:} LAGA, Faculty of Sciences, Ibn Tofail University, Kenitra, Morocco}
\email{youssef.omari@uit.ac.ma \& omariysf@gmail.com}

\subjclass{30H20, 30H05, 46E22, 42C15}
\keywords{weighted Fock spaces, reproducing kernels, uniqueness sets, sampling and interpolating sequences, complete interpolating sequences}
\begin{document}
\maketitle
\begin{abstract}
We obtain a characterization of complete interpolating sequences in a class of Fock-type spaces with radial weights for which such sequences exist. 
Our criterion is formulated in terms of logarithmic separation and controlled perturbations of a reference sequence satisfying an Avdonin-type condition. 
This provides a geometric description of complete interpolating sequences and extends previous results of Borichev--Lyubarskii and Baranov--Belov--Borichev on Riesz bases of reproducing kernels in Fock-type spaces. 
It also yields explicit density criteria for sampling and interpolating sequences.
\end{abstract}
\section{Introduction}

Sampling and interpolation problems play a central role in the theory of spaces of analytic functions. 
Given a Hilbert space of analytic functions with reproducing kernel, a natural question is to determine which discrete sequences allow one to reconstruct functions from their values on the sequence and which sequences allow arbitrary interpolation. 
Sequences possessing the first property are called \emph{sampling sequences}, while those possessing the second are called \emph{interpolating sequences}. 
Sequences enjoying both properties are known as \emph{complete interpolating sequences}. 
The existence and structure of such sequences reflect a delicate interaction between the geometry of the space and the growth of its reproducing kernels.

A classical example is provided by the Paley--Wiener space of entire functions of exponential type. 
In this setting the Kadets--Ingham theorem describes the stability of exponential Riesz bases under small perturbations of the integer lattice (see \cite{K,Young1980}). 
More generally, in de Branges spaces complete interpolating sequences correspond exactly to Riesz bases of reproducing kernels \cite{deBranges1968}. 
These ideas have strongly influenced the modern theory of interpolation in spaces of analytic functions.

Weighted Fock spaces provide another natural framework in which sampling and interpolation problems arise. 
For the classical Gaussian Fock space, Seip and Wallstén obtained a complete description of sampling and interpolating sequences in terms of density conditions \cite{Seip92,SW92}. 
In particular, although sampling sequences and interpolating sequences both exist in this space, no sequence can possess both properties simultaneously.  Further results in \cite{BS93,BHKM} show that there are no sequences which are simultaneously sampling and interpolating, even when {unbounded} multiplicities are allowed.

A different phenomenon appears in Fock-type spaces associated with weights of slower growth. 
In particular, Borichev and Lyubarskii showed that certain Fock-type spaces admit Riesz bases of reproducing kernels \cite{BL}. 
Later, Baranov, Belov, and Borichev established a deep connection between these spaces and de Branges spaces \cite{BBB17}, providing a structural explanation for the existence of Riesz bases in this setting. More general weights, whose Laplacian is a doubling measure, were considered by Berndtsson and Ortega-Cerdà \cite{BO} and Marco, Massaneda, and Ortega-Cerdà \cite{NMO}. Related questions for rapidly growing radial weights were studied by Lyubarskii and Seip \cite{LS1} and in \cite{BDK06}. In this case, complete interpolating sequences do not exist.   For small Fock spaces with slowly growing radial (logarithmic) weights, complete descriptions of complete interpolating sequences are available; see \cite{BB,BBG,BDHK,BM,KO20,Mi,O21} for various settings and extensions.

The purpose of this paper is to give a complete characterization of complete interpolating sequences in a class of weighted Fock spaces with slowly growing radial weights.
We show that, in this setting, the existence of such sequences is governed by a critical density condition.
In particular, the role of the reference lattice is played by a sequence whose geometry naturally reflects the underlying Fock-space weight. \\

We now introduce the Fock-type spaces and the associated notions of sampling and interpolation that will be used throughout the paper. Let $\varphi:[0,\infty)\rightarrow (0,\infty)$,  and extend $\varphi$ to the complex plane $\C$ by putting $\varphi(z)=\varphi(|z|)$. For every $0<p\leq \infty$ and every weight $\varphi$ we associate the Fock spaces $\cF^p_\varphi$ of all entire functions $f$ satisfying 
\[
\begin{aligned}
\|f\|_{\varphi,\infty} 
&:= \sup_{z\in\mathbb{C}} |f(z)|e^{-\varphi(z)} 
< \infty, 
\quad p=\infty, \\[6pt]
\|f\|_{\varphi,p}^p 
&:= \int_{\mathbb{C}} |f(z)|^p e^{-p\varphi(z)}\, dm(z) 
< \infty, 
\quad 0<p<\infty,
\end{aligned}
\]
{where $dm$ is the} area Lebesgue measure.  We assume that $\log |z| = o(\varphi(z))$ as $|z|\to \infty$, ensuring that the space is not limited to polynomials.

For any $z\in\C$, we consider the linear maps 
$$L_z\ :\ f\in\cF^p_\varphi\longmapsto f(z)\in\C.$$ 
In the case {$p=2$, the boundedness of $L_z$ and the} Riesz representation theorem ensure that there exists a unique function $K_z\in\cF^2_\varphi$, called the reproducing kernel at $z$,  such that 
\[
L_z(f) = \langle f,K_z\rangle,\quad f\in\cF^2_\varphi.
\]
These reproducing kernels are central to the study of Fock-space structure; for sufficiently regular weights $ \varphi(r) \gg (\log r)^2$, \(\cF^2_\varphi\) does not admit a Riesz basis of normalized kernels \cite{BDK06,BL}.  The classical Fock space $\varphi(r) = r^2$ had been treated earlier by Seip \cite{Seip92,SW92}. By contrast, when $\varphi(r) = (\log r)^\alpha$ with $1 < \alpha \le 2$, $\cF^2_\varphi$ admits Riesz bases of normalized reproducing kernels supported on the real line \cite{BL}, making these spaces de Branges spaces.

We now introduce the notions of sampling and interpolation sequences. Let $\Lambda$ be a sequence of $\C$.
For $0<p<\infty$, the sequence $\Lambda$ is called a \emph{sampling sequence} for $\cF^p_\varphi$ if there exist constants $0<c_\Lambda \le C_\Lambda$ such that
\[
c_\Lambda \sum_{\lambda\in \Lambda} 
\frac{|f(\lambda)|^p}{\|L_\lambda\|^p}
\le 
\|f\|_{\varphi,p}^p
\le 
C_\Lambda \sum_{\lambda\in \Lambda} 
\frac{|f(\lambda)|^p}{\|L_\lambda\|^p},
\quad f\in\cF^p_\varphi.
\]
The sequence $\Lambda$ is called an \emph{interpolating sequence} for $\cF^p_\varphi$ if for every sequence 
$v=(v_\lambda)_{\lambda\in\Lambda}$ such that 
$\left(v_\lambda/\|L_\lambda\|\right)_{\lambda\in\Lambda}\in \ell^p$,
there exists $f\in\cF^p_\varphi$ satisfying
\[
f(\lambda)=v_\lambda, \quad \lambda\in\Lambda .
\]
{For $p=\infty$}, the sequence $\Lambda$ is called \emph{sampling} for $\cF^\infty_\varphi$ if there exists $C_\Lambda\ge1$ such that
\[
\|f\|_{\varphi,\infty}
\le
C_\Lambda \sup_{\lambda\in\Lambda}
|f(\lambda)|e^{-\varphi(\lambda)},
\quad f\in\cF^\infty_\varphi.
\]
It is called \emph{interpolating} for $\cF^\infty_\varphi$ if for every sequence
$v=(v_\lambda)_{\lambda\in\Lambda}$ satisfying
\[
\sup_{\lambda\in\Lambda}|v_\lambda|e^{-\varphi(\lambda)}<\infty,
\]
there exists $f\in\cF^\infty_\varphi$ such that
\[
f(\lambda)=v_\lambda, \quad \lambda\in\Lambda .
\]
{Finally,} $\Lambda$ is called a \emph{complete interpolating sequence} for $\cF^p_\varphi$, $0<p\leq \infty$,   if it is both sampling and interpolating.\\

When $p=2$, the Hilbert space structure allows one to interpret these notions in terms of the reproducing kernels  $K_\gamma$. Indeed, a complete interpolating sequence $\Gamma$ corresponds precisely to a Riesz basis of normalized reproducing kernels 
$\mathcal{K}_{\Gamma}:=\{K_\gamma/ \|K_\gamma\|_{\varphi,2}\}_{\gamma \in \Gamma} \subset \mathcal{F}^2_\varphi$.\\

In the setting of general sufficiently regular weights, one can describe conditions ensuring the existence of Riesz bases of reproducing kernels.  For a weight $\varphi$, set $\psi(t)=2\varphi(e^t).$
{Baranov, Belov and Borichev} \cite{BBB17} proved that  if $\psi$ satisfies suitable regularity conditions (see \eqref{conditions} below),
then $\mathcal{F}^2_\varphi$ admits a Riesz basis of normalized reproducing kernels supported on the real line. The basis points are given by
\[
\sigma_n = e^{y_n}, \quad \psi'(y_n)=2n+2.
\]
 They also showed that if $\psi''$ tends to infinity, then $\mathcal{F}^2_\varphi$ does not admit a Riesz basis of normalized reproducing kernels. See also the related works of Isaev and Yulmukhametov, where conditions on $\varphi$ are established to study the existence of Riesz bases of normalized reproducing kernels in weighted Fock spaces \cite{IY} and the references therein.
 
 \subsection{Reference sequence and complete interpolation {sequences} for $\cF^p_\varphi$}
Under these assumptions, the geometry of the space is determined by a radial reference sequence. Throughout the paper, we assume that
\[
\psi(t) = p \, \varphi(e^t)
\]
satisfies the above regularity conditions:
{\begin{equation}\label{conditions}
    \psi'(t) \to \infty, \quad \psi''(t) > 0 \text{ is non-increasing, and } 
|\psi'''(t)| = O\bigl((\psi''(t))^{5/3}\bigr), \quad t \to \infty.
\end{equation}

Before stating the main theorem, we introduce the reference sequence $\Sigma_{\varphi,p}$, which represents the ideal distribution of points in $\cF^p_\varphi$, for $0<p<\infty$. Namely, $\Sigma_{\varphi,p}$ is given by
\begin{equation}\label{sigma}
\Sigma_{\varphi,p}=\{\sigma_n=e^{y_n}: \ \psi'(y_n)=pn+2,\ n\ge0\}.
\end{equation}
This sequence} will serve as a model geometry for the space $\mathcal F^p_\varphi$.

{
We also define a logarithmic distance
\[
d_{\log} (z,w) = \frac{|z - w|}{1 + \min(|z|,|w|)}, \qquad z,w \in \mathbb{C}, 
\]
and   say that a sequence $\Gamma$ is \emph{logarithmically separated}, if there exists $d_\Gamma > 0$ such that
\[
\inf \{ d_{\log}(\gamma, \gamma^*) : \gamma, \gamma^* \in \Gamma,  \ \gamma \neq \gamma^* \} \ge d_\Gamma.
\]
}

For a sequence $\Gamma \subset \mathbb{C}$ {and $N$ a positive integer}, we define $\Delta_N(\Gamma)$ in \eqref{deltan} to measure the average radial deviation from the reference sequence over $N$ consecutive points. 
The following theorem characterizes complete interpolating sequences in terms of logarithmic separation and bounded radial perturbation relative to $\Sigma_{\varphi,p}$ given by an Avdonin-type condition.

\begin{theo}\label{thm1}
Let $\Gamma=\{\gamma_n\}_{n\ge0}$ be a sequence of $\C$ such that $|\gamma_n|\le |\gamma_{n+1}|$ and write $\gamma_n=e^{y_n}e^{\delta_n}e^{i\theta_n}$, for some real sequences $(\delta_n)$ and $(\theta_n)$. Then $\Gamma$ is a complete interpolating sequence for $\cF^p_\varphi$, $0<p<\infty$,  if and only if 
\begin{enumerate}
    \item $\Gamma$ is  logarithmically separated.
    \item $\left(\psi''(y_n)\delta_n\right)$ is bounded.
    \item  There exists a positive integer $N$ such that
     \begin{equation}\label{deltan}
        \Delta_N(\Gamma) := \limsup_{n\rightarrow\infty} \frac{1}{y_{n+N}-y_n} \left|\sum_{k=n+1}^{n+N}\delta_k\right|<\frac{1}{2}.
    \end{equation}
\end{enumerate}
\end{theo}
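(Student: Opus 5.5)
We will follow the standard scheme for complete interpolating sequences in small Fock spaces (as in Borichev--Lyubarskii, Baranov--Dumont--Hartmann--Kellay and Kellay--Omari). The key object is a generating function
\[
G_\Gamma(z)=\prod_{n\ge0}\Bigl(1-\frac{z}{\gamma_n}\Bigr),
\]
with the product suitably normalized if $0\in\Gamma$. We use the following criterion. A sequence $\Gamma$ is complete interpolating for $\cF^p_\varphi$ if and only if $\Gamma$ is logarithmically separated and $G_\Gamma$ satisfies the two-sided estimate
\[
|G_\Gamma(z)|\asymp e^{\varphi(z)}\,\frac{\dist(z,\Gamma)}{1+|z|}\,\frac{1}{(1+|z|)^{2/p}}\,\omega(z)
\]
off the zero set. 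Here $\omega$ is a weight on circles satisfying a Muckenhoupt $(A_p)$-type condition in the variable $t=\log|z|$, relative to the local scale $1/\sqrt{\psi''}$ and the sequence $\Sigma_{\varphi,p}$.

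Sufficiency then follows from Lagrange interpolation $f=\sum_n f(\gamma_n)\,G_\Gamma/(G_\Gamma'(\gamma_n)(z-\gamma_n))$. The estimate is reduced to boundedness of a discrete Hilbert transform in weighted $\ell^p$, which holds exactly under the discrete $(A_p)$ condition. Necessity uses the standard argument: for a complete interpolating sequence, the functions vanishing on $\Gamma\setminus\{\gamma_n\}$ and equal to $1$ at $\gamma_n$ are unique. They are $G_\Gamma/((z-\gamma_n)G_\Gamma'(\gamma_n))$, and their norm control forces the $(A_p)$ condition.

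The first step is the model case $\Gamma=\Sigma_{\varphi,p}$. Using the conditions \eqref{conditions}, we show that $\log|G_\Sigma(e^{t+i\theta})|$ equals $\psi(t)/p$ minus $\frac2p t$, up to a bounded error plus the local term $\log\dist$. We do this by comparing the counting function $n(e^t)\approx(\psi'(t)-2)/p$ with the Jensen-type identity $\log|G(re^{i\theta})|\approx\int_0^{\log r}n(e^s)\,ds$. The spacing is $y_{n+1}-y_n\asymp p/\psi''(y_n)$, and the bound $|\psi'''|=O((\psi'')^{5/3})$ keeps the second-order Taylor errors bounded. This recovers the \cite{BBB17} basis for $p=2$.

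For a general $\Gamma$ we write
\[
\log\Bigl|\frac{G_\Gamma(z)}{G_\Sigma(z)}\Bigr|=\sum_n\log\Bigl|\frac{1-z/\gamma_n}{1-z/\sigma_n}\Bigr|.
\]
Away from the zeros this behaves like $-\sum_{|\sigma_n|<|z|}\delta_n$, plus bounded terms, plus a local contribution. Condition (2), $\psi''(y_n)\delta_n=O(1)$, says that each $\gamma_n$ stays within a bounded number of local spacings of $\sigma_n$, so the local term and the angular parts $\theta_n$ contribute only bounded errors. Hence $\omega(e^t)\asymp\exp\bigl(-\sum_{y_n<t}\delta_n\bigr)$. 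Here (1) is needed for the lower bound near zeros, and its necessity is standard: point evaluations at nearby points are not independent.

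The main obstacle is identifying the discrete $(A_p)$ condition for $\omega$ with the Avdonin-type condition (3). We would first show that, with the local scale normalized, the weight $w_n=\exp(-p\sum_{k\le n}\delta_k)$ on $\mathbb{N}$ is $(A_p)$ iff $e^{h}$-type weights with $h$ of "slope" less than $\tfrac12$ in $t$-units over blocks are admissible. This is the analogue of the Lyubarskii--Seip characterization that $|x|^{\alpha}$ is $(A_2)$ iff $|\alpha|<1$.

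Concretely, averaging over blocks of $N$ consecutive indices, $\sum\delta_k$ grows at most like $\beta\,(y_{n+N}-y_n)$ with $\beta<\tfrac12$. Feeding this into the $(A_p)$ averages gives the sufficiency of (3), and bounded fluctuations between blocks are absorbed using (2).

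Conversely, if $\Delta_N\ge\tfrac12$ for every $N$, we would construct blocks on which $\omega$ behaves like a power weight with critical exponent. On such blocks the $(A_p)$ constant, or equivalently the norm of the Lagrange interpolation operator tested on finitely supported sequences, blows up. Alternatively, the functions $G_\Gamma/(z-\gamma_n)$ then fail to lie in, or to be uniformly controlled in, $\cF^p_\varphi$. We expect this necessity direction, especially the dependence of the threshold $\tfrac12$ on $p$ through the normalization $\psi=p\varphi(e^t)$, to need the most care.
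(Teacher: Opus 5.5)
Your first reduction (the Lagrange series built from $G_\Gamma/(G_\Gamma'(\gamma_n)(z-\gamma_n))$, and the uniqueness of these functions in the necessity part) is also the paper's. After that, however, the proposal has a genuine gap exactly where the theorem is decided.

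The criterion you invoke is: complete interpolation iff separation plus $|G_\Gamma|\asymp e^{\varphi}\frac{\dist(z,\Gamma)}{1+|z|}(1+|z|)^{-2/p}\omega$ with $\omega$ of ``$(A_p)$-type relative to the local scale''. It is neither formulated precisely nor proved for $\cF^p_\varphi$. Its equivalence with (3), which is the whole content of the statement, is explicitly left open.

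Moreover, the Muckenhoupt mechanism (boundedness of a singular integral through cancellation) is not what operates here. It is also unavailable for $0<p\le 1$, which the theorem covers: the discrete Hilbert transform $\sum_{n\ne m}a_n/(m-n)$ is unbounded on every weighted $\ell^p$ with $p\le 1$.

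The paper proceeds as follows:
\begin{itemize}
\item It measures $f_v$ in the equivalent norm $\|\cdot\|_{\Sigma_{\varphi,p}}$ coming from the reference complete interpolating sequence (Theorem~\ref{theoRef}), so everything reduces to the matrix $A_{n,m}$.
\item It uses (2) to get $n_m=m+j$ with $|j|$ bounded. The canonical product estimates then give, up to $(1+o(1))$ factors in the exponents, $|A_{n,m}|\lesssim\exp\bigl(-\tfrac{1}{2}|y_n-y_{m+j}|+|\sum_k\delta_k|\bigr)$, the sum running over the indices between $n$ and $m+j$.
\item There is no singular part. Condition (3) turns this bound into $\exp\bigl(-(\tfrac{1}{2}-\Delta_N-\varepsilon)|y_n-y_{m+j}|\bigr)$, which is bounded on every $\ell^p$ by a Schur-type argument (for $p\le 1$ via $|\sum a_k|^p\le\sum|a_k|^p$).
\end{itemize}
This exponential decay is why complete interpolating sequences exist here for every $0<p<\infty$. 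Necessity of (3) comes from large entries of the same matrix. Necessity of (2) comes from testing $G_\Gamma/(z-\gamma_n)$ at $\sigma_m$ against $\|L_{\gamma_n}\|\ge|\gamma_n|^m/\|z^m\|_{\varphi,p}$. You give no argument for the necessity of (2), and your formula $\omega\asymp\exp(-\sum_{y_n<t}\delta_n)$ already presupposes it.

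Second, your model computation discards exactly the quantity that produces the threshold. Writing $n(e^t)\approx(\psi'(t)-2)/p$ drops the $+1$ from indexing from $n=0$ and the mean $\tfrac{1}{2}$ of the fractional part; together they integrate to $\tfrac{1}{2}t$. This is the half-term in the Euler--Maclaurin step of Lemma~\ref{lem22}:
\[
p\sum_{k\le m}y_k=\psi'(y_m)y_m-\psi(y_m)+\tfrac{p}{2}y_m(1+o(1))+O(1).
\]
It gives $|G_{\Sigma_p}(z)|^pe^{-p\varphi(z)}=\dist(z,\Sigma_p)^p(1+|z|)^{-2-\frac{p}{2}+o(1)}$ (Lemma~\ref{lem3.3} with $\Delta_N=0$). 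So your formula for $\log|G_{\Sigma_p}|$ is off by the unbounded amount $\tfrac{1}{2}\log|z|$. This $\tfrac{1}{2}$ is precisely what makes the entries of $A_{n,m}$ decay symmetrically, like $e^{-\frac{1}{2}|y_n-y_m|}$ on both sides, and hence yields the critical value $\tfrac{1}{2}$ in (3) for every $p$. In your account its origin is deferred to an unspecified normalization of $\omega$.

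Finally, your claim that the Taylor errors stay bounded fails whenever $\psi''\to 0$, which \eqref{conditions} allows (e.g.\ $\varphi=(\log^+|z|)^\alpha$, $1<\alpha<2$). Consecutive $y_n$ are $\asymp p/\psi''(y_n)$ apart, so $\psi(t)-\ell(t)-\tfrac{p}{2}t$, with $\ell$ the piecewise linear function of \eqref{fonctionl}, oscillates with amplitude $\asymp 1/\psi''(t)$ between them, whatever the bound on $\psi'''$. This is why the paper compares $|G_{\Sigma_p}|^p$ with $e^{\ell(t)-2t}\dist(z,\Sigma_p)^p$ rather than with a smooth expression in $\psi$.
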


The following result characterizes complete interpolating sequences in $\mathcal F^\infty_\varphi$. This result reveals the deep link between complete interpolation in $\cF^\infty_\varphi$ and the Hilbertian structure of $\cF^2_\varphi$.

\begin{theo}\label{inftyThm}
Let $\Gamma \subset \C$ be a sequence and let $\gamma^* \in \C \setminus \Gamma$. Then $\Gamma \cup \{\gamma^*\}$ is a complete interpolating sequence for $\mathcal F^\infty_\varphi$ if and only if the family of reproducing kernels $\mathcal K_\Gamma=\{K_\gamma / \|K_\gamma\|_{\varphi,2}\}_{\gamma \in \Gamma}$ forms a Riesz basis {for} $\mathcal F^2_\varphi$.
\end{theo}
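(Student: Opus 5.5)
The plan is to work with generating functions. Both conditions, complete interpolation in $\cF^\infty_\varphi$ for $\Gamma\cup\{\gamma^*\}$ and the Riesz basis property of $\mathcal K_\Gamma$ in $\cF^2_\varphi$, will be reduced to statements about a canonical product vanishing exactly on $\Gamma$. The tool for this reduction is Theorem~\ref{thm1}, applied with $p=2$ and with $p=\infty$ treated as a limiting case of the same machinery.

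\textbf{Step 1: generating function for $p=2$.} Assume first that $\mathcal K_\Gamma$ is a Riesz basis of $\cF^2_\varphi$, i.e.\ $\Gamma$ is complete interpolating for $\cF^2_\varphi$. By Theorem~\ref{thm1} with $p=2$, $\Gamma$ is logarithmically separated and is a controlled perturbation of $\Sigma_{\varphi,2}$. Form the generating function $G(z)=\prod_n(1-z/\gamma_n)$. From the reference product for $\Sigma_{\varphi,2}$ we derive the estimate
\[
|G(z)|\asymp e^{\varphi(z)}\,\frac{\dist(z,\Gamma)}{1+|z|}\,\rho(z),
\]
where $\rho$ is a weight comparable to $1$ on average, controlled by the Avdonin-type quantity $\Delta_N$. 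The standard Lagrange-type argument then shows that interpolation in $\cF^2_\varphi$ amounts to boundedness in $L^2$ of the discrete Hilbert-type operator built from $G/(G'(\gamma)(z-\gamma))$. For $\Gamma\cup\{\gamma^*\}$ the relevant function is $G_*(z)=(z-\gamma^*)G(z)$, which gains one extra factor $|z|$.

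\textbf{Step 2: estimates at $p=\infty$.} Using the estimate for $G$ from Step 1, I would show that $|G_*(z)|e^{-\varphi(z)}\asymp \dist_{\log}(z,\Gamma\cup\{\gamma^*\})$. Here $\dist_{\log}$ denotes the distance to the set measured in the metric $d_{\log}$ defined above, i.e.\ $\inf_{w\in\Gamma\cup\{\gamma^*\}}d_{\log}(z,w)$; this notation is introduced for this proof only. The comparability holds up to a multiplicative factor, which I would verify is bounded above and below: the extra factor $z-\gamma^*$ is what compensates the $1/(1+|z|)$ loss.

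\emph{Interpolation.} Once this two-sided bound is in place, interpolation in $\cF^\infty_\varphi$ follows from the Lagrange series
\[
f(z)=\sum_{\lambda} v_\lambda \frac{G_*(z)}{G_*'(\lambda)(z-\lambda)}.
\]
For this sum to converge in $\cF^\infty_\varphi$ we need $\sum_\lambda \frac{|z|}{|z-\lambda|}\cdot\frac{1}{|\lambda|}\lesssim 1$ in the logarithmic geometry. That holds by the log-separation and the lacunary-type growth of $|\sigma_n|$ (since $\psi''$ is non-increasing, $y_{n+1}-y_n\gtrsim 1/\psi''$).

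\emph{Sampling.} Sampling follows by uniqueness: if $f\in\cF^\infty_\varphi$ vanishes on $\Gamma\cup\{\gamma^*\}$, then $f/G_*$ is entire and bounded, hence constant. Since $G_*\notin\cF^\infty_\varphi$ fails only marginally, one more adjustment is needed; then a normal-families/weak-limit argument over rotated and translated copies gives the sampling inequality.

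\textbf{Step 3: converse.} For the converse, a complete interpolating sequence for $\cF^\infty_\varphi$ has a generating function $F\in\cF^\infty_\varphi$ with $|F|e^{-\varphi}\asymp\dist_{\log}(\cdot,\Gamma\cup\{\gamma^*\})$. Dividing by $z-\gamma^*$ gives $G$, and from it we read off the three conditions of Theorem~\ref{thm1} at $p=2$: separation is inherited, and the bounds on $\psi''(y_n)\delta_n$ and on $\Delta_N$ come from comparing $\log|G|$ with $\varphi$ via Jensen's formula and the counting function. This yields the Riesz basis property.

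\textbf{Main obstacle.} The delicate part is matching the indexing of the reference sequences. $\Sigma_{\varphi,2}$ uses $\psi'(y_n)=2n+2$ with $\psi=2\varphi(e^t)$, and the $p=\infty$ case must correspond to a shift by exactly one point, $\psi'=n+1$ versus $n+2$ after normalization. The one-point shift must be absorbed precisely by $\gamma^*$ while keeping the strict inequality $\Delta_N<1/2$. I would verify this first through a precise Jensen-counting comparison.
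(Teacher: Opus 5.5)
\textbf{Your central estimate is wrong, and Steps 2 and 3 depend on it.} By Lemma~\ref{lem3.3} with $p=2$, $|G_\Gamma(z)|e^{-\varphi(z)}$ is $\dist(z,\Gamma)(1+|z|)^{-3/2}$ times a factor between $(1+|z|)^{-\Delta_N-\varepsilon}$ and $(1+|z|)^{\Delta_N+\varepsilon}$. For $\varphi=(\log^+|z|)^2$ and $\Gamma=\Sigma_{\varphi,2}=\{e^{(n+1)/2}:n\ge0\}$, a direct computation gives exactly $\dist(z,\Gamma)|z|^{-3/2}$. Your exponent $1$ is off by $|z|^{1/2}$. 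With it and $\rho\approx1$, the biorthogonal functions $G_\Gamma(z)/(z-\gamma)$ would not lie in $\cF^2_\varphi$, contradicting your own hypothesis in Step 1. Hence $|G_*(z)|e^{-\varphi(z)}$ is $\dist_{\log}(z,\Gamma\cup\{\gamma^*\})$ times a factor between $|z|^{1/2-\Delta_N-\varepsilon}$ and $|z|^{1/2+\Delta_N+\varepsilon}$, not $\asymp\dist_{\log}$.

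Your two-sided bound actually refutes itself: it would put $G_*$ in $\cF^\infty_\varphi$, so $\Gamma\cup\{\gamma^*\}$ could not be a uniqueness set. The ``one more adjustment'' you postpone is exactly the missing proof. The growth factor is what gives uniqueness, and this is the paper's argument: if $F=hG_*\in\cF^\infty_\varphi$, then $|h(z)|\lesssim|z|^{-(1/2-\Delta_N-\varepsilon)}$ wherever $\dist_{\log}(z,\Gamma\cup\{\gamma^*\})\asymp1$, so $h\equiv0$. This is where the strict inequality $\Delta_N<\frac12$ is used.

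The same factor is what makes the Lagrange series converge. With your bound, the terms have size about $\dist_{\log}(z,\cdot)\,(1+|\lambda|)/|z-\lambda|$, which do not decay as $|\lambda|\to\infty$; the sum you write does not follow from your estimate. With the correct bound they decay like $\exp\bigl(-(\frac12-\Delta_N-\varepsilon)\,|\log|z|-\log|\lambda||\bigr)$. The paper organizes this by writing $\|F_v\|_{\varphi,\infty}\asymp\sup_m|\sum_n v_nB_{n,m}|$, using sampling on the reference sequence of Theorem~\ref{theoRef}. It then shows $|B_{n,m}|\asymp|A_{n,m}|e^{o(1)|y_n-y_m|}$, where $A$ is the $\ell^2$ matrix from the proof of Theorem~\ref{thm1}, whose exponential off-diagonal decay comes from $\Delta_N<\frac12$. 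Your interpolation step never uses $\Delta_N<\frac12$, which cannot be right.

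Sampling needs no weak limits, and ``translated copies'' have no meaning for a radial, non-translation-invariant weight. Once uniqueness and the bounded Lagrange operator are in hand, every $f\in\cF^\infty_\varphi$ equals the Lagrange series of its own values. That is the sampling inequality; equivalently, apply the open mapping theorem to the restriction map.

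\textbf{The converse starts from an impossible premise.} A complete interpolating sequence for $\cF^\infty_\varphi$ is a uniqueness set, so no nonzero $F\in\cF^\infty_\varphi$ vanishes on it, let alone one with $|F|e^{-\varphi}\asymp\dist_{\log}$. Moreover, a Jensen or counting-function comparison only controls averaged quantities. It cannot give the strict inequality $\Delta_N<\frac12$ or the boundedness of $(\psi''(y_n)\delta_n)$.

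What works is the paper's route. Log-separation comes from Corollary~\ref{cor22}. Conditions (2) and (3) of Theorem~\ref{thm1} are obtained by contradiction: their failure makes entries of $B_{n,m}$ unbounded, which is incompatible with bounded interpolation in $\ell^\infty$. Here $B_{n,m}$ dominates $(\psi''(y_m)/\psi''(x_n))^{1/2}|A_{n,m}|$, with $x_n=y_n+\delta_n$. Theorem~\ref{thm1} at $p=2$ then gives the Riesz basis.

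The index shift you single out as the main obstacle is not where the difficulty lies. It is built into $\Sigma_{\varphi,\infty}=\Sigma_{\varphi,2}\cup\{\sigma^*\}$ and is handled automatically by the factor $z-\gamma^*$.
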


To illustrate these results, we now consider the special case 
$\varphi_2(z) = (\log^+ |z|)^2$, 
for which a complete description of complete interpolating sequences for 
$\cF^2_{\varphi_2}$ and $\cF^\infty_{\varphi_2}$ was obtained in \cite{BDHK}. 
For general $p \ge 1$, this was extended in \cite{O21}, while the case $0 < p < 1$ was treated by Mironov in \cite{Mi}. 
Belov and Mironov \cite{BM} also provided a complete characterization of shift-invariant sampling in terms of lower logarithmic density. 
The case {$\varphi_\alpha(z) = (\log^+ |z|)^\alpha$, where $1 < \alpha < 2$,} was studied in \cite{KO20} for $p=2$ and $p=\infty$.  
Baranov, Belov, and Gr\"ochenig \cite{BBG}, and more recently Baranov and Belov \cite{BB}, established a connection between Gabor systems and the space $\cF^2_{\varphi_2}$.

Finally, if a Fock-type space admits a Riesz basis of normalized reproducing kernels
$\mathcal{K}_\Gamma:=\{K_\gamma / \|K_\gamma\|_{\varphi,2}\}_{\gamma \in \Gamma}$ with $\Gamma\subset \R$, then it coincides (with equivalence of norms) with a de Branges space (see \cite{BMS}). All de Branges spaces that arise as Fock spaces were explicitly described in \cite{BBB2}.

\subsection{Density results}
{
Let $\cC(r, R)$ be the annulus centered at the origin with inner and outer radii $r$ and $R$, $
\cC(r, R) := \{ z \in \mathbb{C} : r \le |z| < R \}$.  For a given sequence $\Gamma \subset \mathbb C$, we denote by $n_\Gamma(B)$ the number of points of $\Gamma$ contained in the set $B$. Let $\eta(t) = (\psi')^{-1}(t)$. We then define the lower and upper $\varphi$-densities of $\Gamma$ as
\begin{equation}
\cD_\varphi^-(\Gamma) := \liminf_{R\to\infty} \inf_{r\ge 0} \frac{n_\Gamma\bigl(\cC(e^{\eta(r)},e^{\eta(r+R)})\bigr)}{R},
\end{equation}
and
\begin{equation}
\cD_\varphi^+(\Gamma) := \limsup_{R\to\infty} \sup_{r\ge 0} \frac{n_\Gamma\bigl(\cC(e^{\eta(r)},e^{\eta(r+R)})\bigr)}{R}. 
\end{equation}
}
\begin{theo}\label{densitythm}
Let  $0<p<\infty$ and $\Gamma$  be a logarithmically separated sequence.
\begin{enumerate}
\item\label{a} If $\cD_\varphi^+(\Gamma)<\frac{1}{p}$ then $\Gamma$ can be completed in such a way to become a complete interpolating sequence for $\cF^p_\varphi$.
\item\label{b} If $\cD_\varphi^-(\Gamma)>\frac{1}{p}$ then $\Gamma$ contains a complete interpolating sequence for $\cF^p_\varphi$.
\item\label{c} If $\Gamma$ is an interpolating sequence for $\cF^p_\varphi$ then  $\cD_\varphi^+(\Gamma)\leq \frac{1}{p}$.
\item\label{d} If $\Gamma$ is a sampling set for $\cF^p_\varphi$, then $\cD_\varphi^-(\Gamma)\geq \frac{1}{p}$.
\end{enumerate}
\end{theo}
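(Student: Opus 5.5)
We would derive all four items of Theorem~\ref{densitythm} from Theorem~\ref{thm1}, working in the logarithmic coordinate $s=\psi'(\log|z|)$. In this coordinate the annulus $\cC(e^{\eta(r)},e^{\eta(r+R)})$ becomes the interval $[r,r+R)$, and the reference sequence $\Sigma_{\varphi,p}$ sits at $s=pn+2$. So $\Sigma_{\varphi,p}$ has density exactly $1/p$, and $\cD^\pm_\varphi$ are ordinary one-dimensional Beurling densities of the projected sequence $\{\psi'(\log|\gamma|)\}$. A point $\gamma=e^{y_n+\delta_n}e^{i\theta}$ has coordinate $s=\psi'(y_n+\delta_n)\approx pn+2+\psi''(y_n)\delta_n$. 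Hence condition (2) of Theorem~\ref{thm1} says the projected points are a bounded perturbation of $p\N$. The ratio in \eqref{deltan} compares the averaged shift $\sum\delta_k$ with the spacing $y_{n+N}-y_n$. Since $\psi''$ is non-increasing and $|\psi'''|=O((\psi'')^{5/3})$, $\psi''$ is essentially constant on blocks of bounded $s$-length, and this spacing is $\approx pN/\psi''(y_n)$. We will also use that logarithmic separation bounds the number of points in $s$-windows of length $O(1)$ only through $\psi''$, i.e.\ it controls the angular crowding.

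For (a), assume $\cD^+_\varphi(\Gamma)<1/p$. We build a complete interpolating sequence $\Gamma'\supset\Gamma$ by filling the remaining slots.
\begin{enumerate}
\item Take $R$ large with $n_\Gamma([r,r+R))\le (1/p-\epsilon)R$ for every $r$, and cut the $s$-line into consecutive blocks of length $R$.
\item Inside each block, assign to each point of $\Gamma$ a distinct index $n$ with $pn+2$ in the block, matched in increasing order.
\item Place new points on the free indices at positions $e^{y_n}e^{i\theta}$, with $\delta_n=0$ and angles chosen to keep logarithmic separation from the points of $\Gamma$.
\end{enumerate}
The deviations $\psi''(y_n)\delta_n$ are then $O(R)$, so (2) holds. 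For (3), take $N$ equal to a block length, i.e.\ about $R/p$ indices. Using the freedom in the positions of the added points, we choose them so that the signed sum of deviations over each block is nearly zero. Then $\Delta_N$ is $O(1)/N$, which is $<1/2$ for $R$ large. Part (b) is the dual: with $\cD^-_\varphi>1/p$, every block of length $R$ contains at least $(1/p+\epsilon)R$ points. We select one point per index slot, matched in order, so deviations are $O(R)$, and the surplus gives enough choice to balance the block sums so that $\Delta_N<1/2$. Logarithmic separation is inherited from $\Gamma$.

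For (c) and (d) we use a perturbation and comparison argument. Suppose $\Gamma$ is interpolating with $\cD^+_\varphi(\Gamma)>1/p$. Interpolating sequences are logarithmically separated, by standard estimates on $\|L_z\|$ and Bernstein-type inequalities. We dilate slightly in the $s$-variable: map a point with coordinate $s$ to one with coordinate $(1-\epsilon)s$. Up to uniformly bounded distortion, interpolation in $\cF^p_\varphi$ transfers to a space $\cF^p_{\tilde\varphi}$ with $\tilde\psi'$ rescaled by $1-\epsilon$. This space has a complete interpolating sequence $\Sigma$ of density $1/(p(1-\epsilon))$ that is still below the density of $\Gamma$. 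We then derive a contradiction with a Landau-type counting argument. The product $G$ over $\Sigma$ (the generating function of the complete interpolating sequence) satisfies $|G(z)|\asymp e^{\tilde\varphi(z)}\dist_{\log}(z,\Sigma)$. Functions interpolating $\delta$-data on $\Gamma$ have controlled norms, and a Jensen-formula count in annuli shows they cannot vanish on too many points of $\Gamma$. Part (d) is symmetric: if a sampling $\Gamma$ had $\cD^-_\varphi<1/p$, we would construct a function peaking on a large gap annulus from a dilated reference product, contradicting the sampling inequality.

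The main obstacle is the necessity parts (c) and (d). The reduction to densities is cheap for (a) and (b) once Theorem~\ref{thm1} is available. For (c) and (d), however, we must control $\log|G|$ on annuli through Jensen's formula while only varying the weight at the level of $\psi'$. Our first attempt there is the explicit product $\prod(1-z/\tilde\sigma_n)$ together with the asymptotics of $\tilde\psi$ under \eqref{conditions}.
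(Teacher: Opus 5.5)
\textbf{Overall.} Parts (a) and (b) follow the paper's route but rest on an unproved balancing claim; parts (c) and (d) have a genuine gap, because the key comparison tool is missing.

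\textbf{Parts (a) and (b).} Like the paper, you reduce to Theorem~\ref{thm1} and match points to index slots inside $\psi'$-blocks, so that $\psi''(y_n)\delta_n$ stays bounded. You then use the free slots (resp.\ the surplus points) to cancel the block sums of $\delta_n$. That cancellation is only asserted, and it does not follow from the single-scale bound you invoke.

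Knowing only $n_\Gamma([r,r+R))\le(1/p-\epsilon)R$ for every $r$, the $K=(1/p-\epsilon)R$ points of $\Gamma$ in a block may spread over its right half only. In $s$-offsets their contribution is then at least $\tfrac34KR$, against a target of $\tfrac12(K+F)R$ with $F=\epsilon R$ added points. For $\epsilon<1/(3p)$ the block sum stays positive wherever you put the added points, even at the left end. Your step~3, which fixes $\delta_n=0$ for the added points, also conflicts with later using their positions as free parameters.

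The paper makes the balancing work by applying $\cD^+_\varphi(\Gamma)<1/p$ at a second, smaller scale:
\begin{itemize}
\item every window of $s$-length $pM$ contains at most $M-1$ points of $\Gamma$, hence has a free slot;
\item in a group of $N\gg M$ such windows, of total $t$-length $L$, the $\Gamma$-points contribute at most $ML$ to the group sum;
\item sliding the $\ge N$ free points from the first window of the group to the last moves the group sum from a negative to a positive value of order $NL$;
\item an intermediate placement therefore gives zero.
\end{itemize}
The same two-scale structure is needed in (b). There the points are discrete, so you balance by moving chosen points between adjacent windows. Also, $\Delta_N$ is a $\limsup$ over all $n$, not only block-aligned $n$. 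Balanced aligned groups control it only if you take the window to consist of many groups.

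\textbf{Parts (c) and (d): the genuine gap.} Your comparison sequence has the right density, but the transfer mechanism does not exist. The $s$-dilation $\psi'(\tilde t)=(1-\epsilon)\psi'(t)$ is a non-holomorphic radial stretch of $\C$; for $\varphi=(\log^+|z|)^2$ it is $z\mapsto z|z|^{-\epsilon}$. So it does not act on entire functions. It also displaces points by $\epsilon s\to\infty$ in the $s$-coordinate, so no perturbation-stability argument covers it either.

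The Jensen count does not replace it. Jensen's formula from the origin only bounds the cumulative quantity $\int^{\log\rho}n_\Gamma(e^\tau)\,d\tau$ by $\varphi(\rho)-\log|f(0)|$ plus lower-order terms. That cannot detect an overcrowded window $[r,r+R)$ far out, because sparse earlier windows compensate. Jensen in an annulus would need a lower bound on the mean of $\log|f|$ over the inner circle, which neither the norm bound nor the interpolation property provides. For (d), a low-density window is not a gap, and no peaking function is actually constructed.

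What is missing is a Landau/Seip-type comparison lemma; the paper uses Lemma~\ref{Lemma 5.1}. It says: if $\Lambda$ is interpolating for $\cF^p_{(1-\varepsilon)\varphi}$ and a logarithmically separated $\Sigma$ is sampling for the space with weight $\varphi$, then
\[
(1-c(\delta))\,\mathrm{Card}(\Lambda\cap\cC(x,Rx))\le\mathrm{Card}(\Sigma\cap\cC(\delta x,Rx/\delta)).
\]
You perturb the weight, not the points:
\begin{itemize}
\item For (c), apply it to $\Gamma$, which is interpolating for $\cF^p_\varphi=\cF^p_{(1-\varepsilon')(1+\varepsilon)\varphi}$ with $(1-\varepsilon')(1+\varepsilon)=1$. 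Compare it against the reference sequence of $(1+\varepsilon)\varphi$, whose $\varphi$-density is $(1+\varepsilon)/p$.
\item For (d), apply it to the reference sequence of $(1-\varepsilon)\varphi$ (interpolating, $\varphi$-density $(1-\varepsilon)/p$) against the sampling sequence $\Gamma$.
\end{itemize}
No transfer of the interpolation or sampling property of $\Gamma$ is then needed.
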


\subsection*{Outline of the paper}The paper is organized as follows. 
Section \ref{sect2} contains some preliminary material, in particular estimates for the associated functional and bounds for canonical products. 
Section \ref{sect3} introduces the reference sequence and complete interpolating sequences for $\mathcal{F}^p_\varphi$, for $0<p\le \infty$. 
In Section \ref{sect4}, using the reference sequence and canonical product estimates, we establish the asymptotic evaluation estimates. 
Section \ref{sect5} deals with separation properties. Sections \ref{sect6}, \ref{sect7}, and \ref{sect8} contain the proofs of Theorems \ref{thm1}, \ref{inftyThm}, and \ref{densitythm}, respectively.\\

Throughout the paper, we denote by  $D(z,r)$ the disc centered at $z$ with radius $r$ and {$\cC(r, R)$ the} annulus centered at the origin with inner and outer radii $r$ and $R$, $
\cC(r, R) := \{ z \in \mathbb{C} : r \le |z| < R \}$. We set
\[
\psi(t) = \psi_p(t) := p\,\varphi(e^t),
\]
and we note that $\psi$ depends on the parameter $p$. Whenever necessary, we shall explicitly write $\psi_p$ in order to avoid any possible ambiguity. { The} sequence $(y_n)$ is defined by $\psi'(y_n) = pn + 2$, 
$\Sigma_{\varphi,p}$ is given by \eqref{sigma}, 
and we denote by $\eta(t) = (\psi')^{-1}(t)$ the inverse function of $\psi'$. 
The notation $A \lesssim B$ means that there exists an absolute constant $c>0$ such that $A \le c B$, 
while $A \asymp B$ means that both $A \lesssim B$ and $B \lesssim A$ hold.

\section{Preliminary Results}\label{sect2}

\subsection{First  evaluation estimates.}
We begin with the following lemmas, which provide estimates for the norm of the evaluation map
\[
L_z : f \in \mathcal{F}^p_\varphi \longmapsto f(z) \in \mathbb{C}.
\]
These results will be used repeatedly throughout the paper. 
An exact estimate of $\|L_z\|$ will be given later (see Theorem~\ref{kernelestimate}).

\begin{lem}\label{lem3.1}
  For every $p> 0$ we have     \[\|L_z\|^p\lesssim \frac{\left(\psi''(\log|z|)\right)^{1/2}}{1+|z|^2}e^{p\varphi(z)},\quad z\in\C. \]
\end{lem}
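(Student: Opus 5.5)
We will prove the bound by the standard subharmonicity argument, applied on a disc around $z$ whose radius is adapted to the weight. Write $z=e^{s}e^{i\theta}$ with $s=\log|z|$. We use a disc of radius $\rho(z)=c|z|\,(\psi''(s))^{-1/2}$ for a small absolute $c>0$. In the logarithmic variable $w=\log\zeta$ this disc corresponds to a neighbourhood of $s+i\theta$ of size $\asymp(\psi''(s))^{-1/2}$. Since $\psi''$ is positive and non-increasing, $\psi''(s)\ge\psi''(t)$ for $t\ge s$, and the bound is bounded near $0$; hence $\rho(z)\lesssim|z|$ and the disc stays in an annulus $\cC(|z|e^{-c'},|z|e^{c'})$. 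For $|z|\le 1$ we simply use a disc of fixed radius: there $\varphi$ and $\psi''$ are bounded above and below, so the claim reduces to the trivial local estimate. We therefore assume $|z|$ is large.

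\textbf{Step 1 (subharmonicity).} For $f\in\cF^p_\varphi$ the function $|f|^p$ is subharmonic, so
\[
|f(z)|^p\le \frac{1}{\pi\rho^2}\int_{D(z,\rho)}|f(\zeta)|^p\,dm(\zeta).
\]
Inserting the weight gives
\[
|f(z)|^p\le \frac{1}{\pi\rho^2}\,\sup_{\zeta\in D(z,\rho)}e^{p\varphi(\zeta)}\int_{D(z,\rho)}|f|^pe^{-p\varphi}\,dm .
\]

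\textbf{Step 2 (the weight does not oscillate on the disc).} The sup over the disc of $e^{p\varphi(\zeta)}$ must be compared with $e^{p\varphi(z)}$, and this is the main point. We would not bound $p\varphi(\zeta)-p\varphi(z)$ directly, since it has a first-order term of size $\psi'(s)\,|\log|\zeta|-s|$, which is large. Instead we remove the linear part with a harmonic correction. Let
\[
h(\zeta)=\psi(s)+\psi'(s)\bigl(\log|\zeta|-s\bigr),
\]
which is harmonic off $0$, and write $e^{h}=|g|$ on the disc with $g$ holomorphic on $D(z,\rho)$; explicitly $g(\zeta)=e^{\psi(s)-s\psi'(s)}\zeta^{\psi'(s)}$ with a local branch. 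Apply Step 1 to the subharmonic function $|f/g|^p\cdot|g|^p\,e^{-h}$, or equivalently to $|f|^p e^{-h}=|f\,g^{-1}|^p$ with the exponents adjusted by $p$, which is subharmonic since $f/g$ is holomorphic. On the disc Taylor's formula gives
\[
\bigl|\psi(\log|\zeta|)-h(\zeta)\bigr|\le \tfrac12\sup_{|t-s|\le c'}\psi''(t)\,\bigl(\log|\zeta|-s\bigr)^2\lesssim \psi''(s)\cdot c^2(\psi''(s))^{-1}=O(1).
\]
To replace $\sup_{|t-s|\le c'}\psi''(t)$ by $\psi''(s)$ we use the condition $|\psi'''|=O((\psi'')^{5/3})$; simple monotonicity covers only $t\ge s$. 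Integrating $(\psi''^{-2/3})'=O(1)$ over an interval of length $(\psi'')^{-1/2}$ shows that $\psi''(t)\asymp\psi''(s)$ there, because $(\psi'')^{-1/2}\ll(\psi'')^{-2/3}$ is false in general. That inequality, however, is only needed when $\psi''$ is large; when $\psi''$ is bounded, $\psi''\to$ a limit or decreases, and the interval has length $\gtrsim1$ but $\psi''$ on $[s-c',s]$ is comparable by the same derivative bound with constant $O(1)$. Hence $p\varphi(\zeta)=h(\zeta)+O(1)$ on the disc, with $p\varphi(z)=h(z)$.

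\textbf{Step 3 (conclusion).} Putting these together,
\[
|f(z)|^pe^{-p\varphi(z)}=|f(z)|^pe^{-h(z)}\le\frac{1}{\pi\rho^2}\int_{D}|f|^pe^{-h}\,dm\lesssim\frac{\psi''(s)}{|z|^2}\int_D|f|^pe^{-p\varphi}\,dm\le \frac{\psi''(s)}{|z|^2}\|f\|_{\varphi,p}^p.
\]
This yields the exponent $\psi''$ rather than the claimed $(\psi'')^{1/2}$. The disc was chosen in the wrong shape. Instead of a round disc we average over a \emph{polar rectangle}
\[
\{\zeta=e^{t+i\tau}: |t-s|\le c(\psi'')^{-1/2},\ |\tau-\theta|\le c\}.
\]
It has area $\asymp|z|^2(\psi''(s))^{-1/2}$, and subharmonicity of $|f/g|^p$ in the variable $w=\log\zeta$ still gives a mean value bound over it. Here the angular width is $O(1)$ and the radial width is $(\psi'')^{-1/2}$; the mean value inequality over a rectangle $[\,-a,a\,]\times[-1,1]$ with $a\le 1$ follows by averaging disc means of radius $a$ centred along the segment $\{s\}\times[-1+a,1-a]$ only after a Poisson/Harnack argument. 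More simply, we apply the mean value property on the small disc at $w_0$, and then on each point use the circle means in $\tau$. Indeed, $u(w)=|f/g|^p(e^w)$ is $2\pi$-periodic in $\tau$, and the circle mean $M(t)=\frac1{2\pi}\int_0^{2\pi}u(t+i\tau)\,d\tau$ is convex in $t$. With $\log|g|=h$ radial, we get
\[
|f(z)|^pe^{-p\varphi(z)}\lesssim \sup_{|t-s|\le a}M(t)\lesssim \frac1{a}\int_{|t-s|\le 2a}M(t)\,dt,
\]
after a first disc-mean step of radius $a$ to pass from the pointwise value to circle means at nearby radii. Here $a=(\psi''(s))^{-1/2}$. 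Converting $dt\,d\tau$ to $dm=|\zeta|^2dt\,d\tau$ gives the factor $(\psi''(s))^{1/2}|z|^{-2}$, as claimed. The main obstacle is exactly this last step: obtaining the sharp factor $(\psi'')^{1/2}$ through one-dimensional (radial) localization, while keeping the Taylor error $O(1)$ via the regularity conditions \eqref{conditions}.
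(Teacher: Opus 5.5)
Your final route (the harmonic correction $|\zeta|^{-\psi'(s)}$, convexity of the circle means $M(t)$ in $t=\log r$, and a Taylor expansion of $\psi$ with $O(1)$ error on a window of length $a=(\psi''(s))^{-1/2}$) uses the same ingredients as the paper. As written, however, it has a genuine gap. The gap comes from your claim that $\rho(z)\lesssim|z|$, i.e.\ that $a$ is bounded.

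Since $\psi''$ is non-increasing, $a$ is non-decreasing, and $a\to\infty$ whenever $\psi''\to0$. This is the main case: for $\varphi=(\log^+ r)^\alpha$ with $1<\alpha<2$, one has $a\asymp s^{1-\alpha/2}$. Consequently your last step fails. You convert $dt\,d\tau$ into $|z|^{-2}\,dm$ on $\{|t-s|\le 2a\}$, but there $|\zeta|^{-2}=e^{-2t}$ reaches $e^{4a}|z|^{-2}$. Your chain therefore only yields $\|L_z\|^p\lesssim e^{4a}(\psi''(s))^{1/2}|z|^{-2}e^{p\varphi(z)}$, with an unbounded loss.

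The fix is to put the Jacobian into the convex quantity before spreading out. The function $e^{2t}M(t)$ is still (log-)convex, so $\sup_{|t-s|\le r_0}e^{2t}M(t)\le a^{-1}\int_{|t-s|\le 2a}e^{2t}M(t)\,dt=(2\pi a)^{-1}\int_{\cC(e^{s-2a},e^{s+2a})}|f|^pe^{-h}\,dm$. The comparison $e^{2t}\asymp|z|^2$ is then used only on the fixed window $|t-s|\le r_0$ of your first sub-mean-value step.

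This is exactly the paper's device. It works with $\Omega(t)=\omega(t)-\psi'(u)t+2t$, so the linear term $2t$ coming from $dm=e^{2t}\,dt\,d\theta$ is absorbed into the convex exponent. It then uses that $\varepsilon\mapsto\varepsilon^{-1}\int_{u-\varepsilon}^{u+\varepsilon}e^{\Omega}$ is increasing, to pass from the window of length $(\psi''(u))^{-1/2}$ to one of fixed length $\kappa$. On that window the mean value inequality on $D(z,\kappa|z|/4)$ is applied.

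Alternatively, apply the sub-mean value property to the subharmonic function $e^{2\,\mathrm{Re}\,w}|f(e^w)|^pe^{-h(e^w)}$ on a disc of radius $a$ in the $w$-plane. By $2\pi$-periodicity in $\mathrm{Im}\,w$, this gives the bound $\lesssim a^{-1}\int_{|t-s|\le a}e^{2t}M(t)\,dt$ directly, without invoking convexity.

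The same misconception undermines Steps 1--3, and your diagnosis there is backwards. Since $\psi''$ is bounded, a bound with $\psi''$ in place of $(\psi'')^{1/2}$ would be \emph{stronger} than the lemma. It is in fact false when $\psi''\to0$, by the lower bound $\|L_{e^{y_n}}\|^p\gtrsim(\psi''(y_n))^{1/2}e^{\psi(y_n)-2y_n}$ of Lemma~\ref{zn}. What actually breaks is that $D(z,c|z|a)$ is not contained in an annulus of bounded logarithmic width (it even contains $0$ once $ca>1$), so the Taylor bound fails on it.

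Your justification of $\psi''(t)\asymp\psi''(s)$ on $[s-2a,s]$ is also inverted. Integrating $|((\psi'')^{-2/3})'|=O(1)$ gives the comparability precisely when $\psi''(s)$ is \emph{small}, since then $a\ll(\psi''(s))^{-2/3}$. If instead $\psi''(s)$ is bounded below, then $a$ is bounded, and comparability follows because $\psi''$ is non-increasing and bounded.
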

\begin{proof}
  Let $f\in\cF^p_\varphi$, and define 
  \[ F(r)= \int_0^{2\pi}|f(re^{i\theta})|^p\frac{d\theta}{\pi} = \exp(\omega(\log r)).\] 
Then we have
$$
        \|f\|^p_{\varphi,p}  = \int_\C |f(z)|^pe^{-p\varphi(z)}dm(z) 
        = \int_0^\infty F(r) e^{-p\varphi(r)} rdr
         = \int_\R e^{\omega(t)-\psi(t)+2t}dt.
$$
Let $u=\log|z|$.   Using  $\psi(t) = \psi(u) + \psi'(u)(t-u)+O(1)$ for $|t-u|\leq \varepsilon=:\left(\psi''(u)\right)^{-1/2}
  $,  we get
    \begin{equation}\label{11}
       \|f\|^p_{\varphi,p}  \geq   \int_{u-\varepsilon}^{u+\varepsilon} e^{\omega(t)-\psi(t)+2t}dt \gtrsim\  e^{-\psi(u)+u\psi'(u)} \int_{u-\varepsilon}^{u+\varepsilon} e^{\omega(t)-\psi'(u)t+2t}dt.
\end{equation}
Thanks to the convexity {of $\omega(t)$}, the function 
$$\displaystyle v(\varepsilon)=\frac{1}{\varepsilon}\int_{u-\varepsilon}^{u+\varepsilon} e^{\omega(t)-\psi'(u)t+2t} dt$$ is increasing in $(0,\infty)$ and hence for small positive $\kappa$ we get
\begin{equation*}
    \int_{u-\varepsilon}^{u+\varepsilon} e^{\omega(t)-\psi'(u)t+2t} dt \geq \frac{\varepsilon}{\kappa} \int_{u-\kappa}^{u+\kappa} e^{\omega(t)-\psi'(u)t+2t} dt.
\end{equation*}
Thus, 
\begin{align*}
      \|f\|^p_{\varphi,p}  & \gtrsim\ \left(\psi''(t)\right)^{-1/2} e^{-\psi(u)+u\psi'(u)} \int_{u-\kappa}^{u+\kappa} e^{\omega(t)-\psi'(u)t+2t}dt\\
      & = \left(\psi''(t)\right)^{-1/2} e^{-\psi(u)+u\psi'(u)}\int_{\cC(e^{u-\kappa}, e^{u+\kappa})} \frac{|f(\xi)|^p}{|\xi|^{\psi'(u)}} dm(\xi).
\end{align*}
Using, now, the mean inequality in  $D(z,\frac{\kappa|z|}{4})\subset \cC(e^{u-\kappa},e^{u+\kappa})$, we then get
\begin{align*}
    \frac{|f(z)|^p}{|z|^{\psi'(u)}} \lesssim \frac{1}{|z|^2}\int_{D(z,\frac{\kappa|z|}{4})} \frac{|f(\xi)|^p}{|\xi|^{\psi'(u)}} dm(\xi) \lesssim \frac{1}{|z|^2}\int_{\cC(e^{u-\kappa},e^{u+\kappa})} \frac{|f(\xi)|^p}{|\xi|^{\psi'(u)}} dm(\xi).
\end{align*}
Finally, we have
\[
|f(z)|^p \lesssim \frac{|z|^{\psi'(u)}}{|z|^2}\int_{\cC(e^{u-\varepsilon},e^{u+\varepsilon})} \frac{|f(\xi)|^p}{|\xi|^{\psi'(u)}} dm(\xi) \lesssim \frac{\left(\psi''(u)\right)^{1/2}}{|z|^2}e^{\psi(u)}\|f\|_\varphi^p.
\]
Hence
\[\|L_z\|^p\lesssim \frac{\left(\psi''(u)\right)^{1/2}}{1+|z|^2}e^{\psi(u)}  = \frac{\left(\psi''(\log|z|)\right)^{1/2}}{1+|z|^2}e^{p\varphi(z)}.\]
This completes the proof.

\end{proof}

We will need the following {lemma. The} case p = 2 was treated in \cite{BBB17}.
\begin{lem}\label{zn}
    We have for every positive integer $n$
    \begin{equation}
    \|z^n\|^p_{\varphi,p} \asymp \left(\psi''(y_n)\right)^{-1/2} e^{(pn+2)y_n-\psi(y_n)},
    \end{equation}
    and consequently, 
    \begin{equation}\label{Lsigma}
\left\|L_{e^{y_n}}\right\| \asymp \left(\psi''(y_n)\right)^{\frac{1}{2p}}e^{\frac{1}{p}\psi(y_n)-\frac{2}{p}y_n},\quad n\geq 0.
    \end{equation}
\end{lem}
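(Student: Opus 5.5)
Writing the norm in polar coordinates as in the proof of Lemma~\ref{lem3.1}, with $|z|=e^t$, we get
\[
\|z^n\|^p_{\varphi,p}=2\pi\int_\R e^{(pn+2)t-\psi(t)}\,dt .
\]
The exponent $h_n(t)=(pn+2)t-\psi(t)$ is strictly concave, since $\psi''>0$. Its unique critical point is exactly $t=y_n$, because $\psi'(y_n)=pn+2$. The plan is therefore a Laplace-type estimate: show that the integral is comparable to $e^{h_n(y_n)}\,(\psi''(y_n))^{-1/2}$, which is the claimed expression.

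\emph{Upper and lower bounds.} Put $\varepsilon=(\psi''(y_n))^{-1/2}$. On $|t-y_n|\le\varepsilon$, Taylor's formula gives
\[
h_n(t)=h_n(y_n)-\tfrac12\psi''(\xi)(t-y_n)^2 .
\]
To conclude $h_n(t)=h_n(y_n)+O(1)$ there, I need $\psi''(\xi)\asymp\psi''(y_n)$ for $|\xi-y_n|\le\varepsilon$. This follows from the condition $|\psi'''|=O((\psi'')^{5/3})$. Indeed, $(\psi'')^{-2/3}$ has derivative bounded by $O((\psi'')^{0})=O(1)$, so it varies by $O(\varepsilon)$ on the window. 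Since $\psi''$ is non-increasing and tends to a limit, $\varepsilon$ is bounded below, and one checks the relative change is $O(\varepsilon(\psi'')^{2/3})=O((\psi'')^{1/6})$, which is bounded. Alternatively one uses $(\psi'')^{-1/2}$ directly, whose derivative is $O((\psi'')^{1/6})$, again bounded since $\psi''$ is non-increasing. Integrating over the window gives the lower bound $\gtrsim \varepsilon e^{h_n(y_n)}$.

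For the upper bound, concavity of $h_n$ lets me compare with the tangent lines at $y_n\pm\varepsilon$. For $t\ge y_n+\varepsilon$,
\[
h_n(t)\le h_n(y_n+\varepsilon)+h_n'(y_n+\varepsilon)(t-y_n-\varepsilon),
\]
and
\[
|h_n'(y_n+\varepsilon)|=\psi'(y_n+\varepsilon)-\psi'(y_n)\asymp \varepsilon\,\psi''(y_n)=\varepsilon^{-1}
\]
by the same comparability. So each tail integral is $\lesssim e^{h_n(y_n)}\varepsilon$, and the symmetric argument handles $t\le y_n-\varepsilon$. Combining the pieces gives
\[
\|z^n\|^p_{\varphi,p}\asymp(\psi''(y_n))^{-1/2}e^{(pn+2)y_n-\psi(y_n)} .
\]
The main obstacle is justifying this local comparability of $\psi''$ on windows of length $(\psi'')^{-1/2}$ uniformly in $n$, including small $n$. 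For the finitely many small $n$ I will simply absorb everything into the constants.

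\emph{Consequence \eqref{Lsigma}.} The upper bound is Lemma~\ref{lem3.1} at $|z|=e^{y_n}$, using $p\varphi(e^{y_n})=\psi(y_n)$. For the lower bound, test with $f=z^n$:
\[
\|L_{e^{y_n}}\|^p\ge \frac{e^{pny_n}}{\|z^n\|^p_{\varphi,p}}\asymp (\psi''(y_n))^{1/2}e^{\psi(y_n)-2y_n}.
\]
This matches the upper bound since $1+e^{2y_n}\asymp e^{2y_n}$. Taking $p$-th roots gives \eqref{Lsigma}.
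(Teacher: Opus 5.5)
Your route is the paper's. Polar coordinates reduce $\|z^n\|^p_{\varphi,p}$ to $\int e^{(pn+2)t-\psi(t)}\,dt$, and a Laplace estimate at the critical point $y_n$ gives the first formula. Then \eqref{Lsigma} follows from Lemma~\ref{lem3.1} for the upper bound and from testing with $z^n$ for the lower bound, exactly as in the paper. The paper only asserts the Laplace asymptotic, so your extra detail is useful. However, one inference in it does not hold as written.

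From the Lipschitz bound on $g=(\psi'')^{-2/3}$ you get $|g(\xi)-g(y_n)|\le K\,g(y_n)$ on the window $|\xi-y_n|\le\varepsilon$. Here $K$ is a constant multiple of $\sup(\psi'')^{1/6}$, so it is bounded but not small.
\begin{itemize}
\item The bound gives $g(\xi)\le(1+K)g(y_n)$, i.e.\ $\psi''(\xi)\gtrsim\psi''(y_n)$.
\item The reverse inequality $\psi''(\xi)\lesssim\psi''(y_n)$ would need $g(\xi)\ge(1-K)g(y_n)$ with $K<1$, which is not available.
\item For $\xi\ge y_n$ the reverse inequality is free by monotonicity of $\psi''$. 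For $\xi\in[y_n-\varepsilon,y_n]$ it is not justified.
\end{itemize}
That missing direction is exactly what your lower bound uses when you claim $h_n(t)=h_n(y_n)+O(1)$ on the whole window. The variant with $(\psi'')^{-1/2}$ has the same defect, since its relative change is also only bounded.

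The repair is easy: use monotonicity of $\psi''$ where the Lipschitz bound is weak.
\begin{itemize}
\item For the lower bound, integrate only over $[y_n,y_n+\varepsilon]$. There $\psi''(\xi)\le\psi''(y_n)$ gives $h_n(t)\ge h_n(y_n)-\tfrac12$.
\item For the left tail, $\psi'(y_n)-\psi'(y_n-\varepsilon)\ge\varepsilon\,\psi''(y_n)=\varepsilon^{-1}$, again by monotonicity.
\item Only the right tail needs regularity: $\psi'(y_n+\varepsilon)-\psi'(y_n)\ge\varepsilon\,\psi''(y_n+\varepsilon)\ge(1+K)^{-3/2}\varepsilon^{-1}$. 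This is precisely the one-sided comparability your bound on $(\psi'')^{-2/3}$ does provide.
\end{itemize}
Alternatively, shrink the window to $|t-y_n|\le\kappa\varepsilon$ with $\kappa K\le\tfrac12$. Then two-sided comparability holds there, and the same Laplace argument works with constants depending on $\kappa$. With either change your argument for the first formula is complete, and the deduction of \eqref{Lsigma} is correct as you wrote it.
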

\begin{proof}
    We  first have
    \begin{align*}
     \|z^n\|^p_{\varphi,p}  = \int_\C |z|^{np}e^{-p\varphi(z)}dm(z) \asymp \int_0^\infty e^{(pn+2)t-\psi(t)}dt
     \asymp \left(\psi''(y_n)\right)^{-1/2} e^{(pn+2)y_n-\psi(y_n)}.
    \end{align*}
    On the other hand, from the previous lemma we have $$\left\|L_{e^{y_n}}\right\| \lesssim \left(\psi''(y_n)\right)^{\frac{1}{2p}}e^{\frac{1}{p}\psi(y_n)-\frac{2}{p}y_n}.$$
  Now, using {the estimates} of the norm of the function $z\mapsto z^n$,  we obtain
    \begin{align*}
        \left\|L_{e^{y_n}}\right\|^p & \ge \frac{|e^{ny_n}|^p}{\|z^n\|^p_{\varphi,p}} \asymp \left(\psi''(y_n)\right)^{\frac{1}{2}}e^{pny_n}e^{-(np+2)y_n+\psi(y_n)} 
         \asymp \left(\psi''(y_n)\right)^{\frac{1}{2}}e^{\psi(y_n)-2y_n}.
    \end{align*}
\end{proof}

\begin{lem}
 Let $|z| = e^t$ and denote by $N_t = \lfloor (\psi')^{-1}(t) \rfloor$ the integer part of the inverse of $\psi'$ at $t$. Then
\[
\|L_z\|^p \gtrsim \bigl(\psi''(t)\bigr)^{1/2} \, 
e^{\, p N_t t + \psi(y_{N_t}) - (p N_t + 2) y_{N_t}}.
\]
\end{lem}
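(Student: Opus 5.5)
Fix $z$ with $|z|=e^t$ and set $n=N_t$. Since the target bound has the form $|z|^{pn}$ divided by the estimate of $\|z^n\|^p_{\varphi,p}$ in Lemma~\ref{zn}, the natural approach is to test $L_z$ against the monomial $f(\xi)=\xi^{n}$. Because $L_z$ is bounded, for every nonzero $f\in\cF^p_\varphi$ we have
\[
\|L_z\|^p \ \ge\ \frac{|f(z)|^p}{\|f\|^p_{\varphi,p}} .
\]
With $f(\xi)=\xi^n$ the numerator is $|z|^{pn}=e^{pnt}$. By Lemma~\ref{zn}, the denominator satisfies
\[
\|\xi^n\|^p_{\varphi,p}\asymp (\psi''(y_n))^{-1/2}e^{(pn+2)y_n-\psi(y_n)} .
\]
Combining these gives
\[
\|L_z\|^p\gtrsim (\psi''(y_n))^{1/2}\,e^{pnt+\psi(y_n)-(pn+2)y_n}.
\]
This is already the claimed exponential factor. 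The only remaining work is to replace $(\psi''(y_n))^{1/2}$ by $(\psi''(t))^{1/2}$, up to an absolute constant.

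For that replacement I would use that $\psi''$ is positive and non-increasing, by \eqref{conditions}.
\begin{itemize}
\item If $y_n\le t$, monotonicity gives $\psi''(y_n)\ge\psi''(t)$ directly, and there is nothing to prove.
\item If $y_n>t$, I would show that $y_n$ and $t$ lie within distance $O\bigl((\psi'')^{-1/2}\bigr)$ of each other, or at least within the scale on which $\psi''$ is essentially constant, so that $\psi''(y_n)\asymp\psi''(t)$.
\end{itemize}
The tool for the comparability is the growth condition $|\psi'''|=O\bigl((\psi'')^{5/3}\bigr)$. It gives $\bigl|\bigl((\psi'')^{-2/3}\bigr)'\bigr|=O(1)$, so $(\psi'')^{-2/3}$ is Lipschitz. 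Hence $\psi''(s)\asymp\psi''(t)$ whenever $|s-t|\lesssim(\psi''(t))^{-2/3}$, and in particular on the smaller window $|s-t|\lesssim(\psi''(t))^{-1/2}$ once $\psi''$ is small. Near the origin, where $t$ is bounded, all the quantities involved are comparable to constants and the estimate is trivial.

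The main obstacle is pinning down the relation between $n=N_t$ and $t$: the definition of $N_t$ as $\lfloor(\psi')^{-1}(t)\rfloor$ looks dimensionally off. The intended index is presumably the one with $y_n\le t<y_{n+1}$, that is, $n\approx(\psi'(t)-2)/p$. With that reading $y_n\le t$, and the first case above finishes the proof immediately. The argument also goes through if instead one only knows $|y_n-t|\le y_{n+1}-y_n\asymp p/\psi''(y_n)$. Indeed, the window $\lesssim(\psi'')^{-1}$ is much smaller than the admissible window $\lesssim(\psi'')^{-2/3}$ whenever $\psi''$ is small, and it is bounded whenever $\psi''$ is bounded below, so in either regime $\psi''(y_n)\asymp\psi''(t)$ and the lemma follows.
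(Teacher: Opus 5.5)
Your main argument is correct and is the paper's proof: the paper simply tests $L_z$ against $e_n(\xi)=\xi^n/\|\xi^n\|_{\varphi,p}$ with the norm estimate from Lemma~\ref{zn}. You are right that the stated $\lfloor(\psi')^{-1}(t)\rfloor$ is a slip and that the intended index satisfies $y_{N_t}\le t<y_{N_t+1}$; with that reading, the monotonicity bound $\psi''(y_{N_t})\ge\psi''(t)$ justifies replacing $\psi''(y_{N_t})$ by $\psi''(t)$, a step the paper leaves unstated.

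Drop the fallback paragraph, though. When $\psi''$ is small one has $(\psi'')^{-1}\gg(\psi'')^{-2/3}$, not $\ll$. So the Lipschitz bound on $(\psi'')^{-2/3}$ gives no comparability of $\psi''$ across a gap $y_{n+1}-y_n\approx p/\psi''$, and that claim is unjustified. It is not needed under your reading.
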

\begin{proof}  It suffices to consider the function
$
e_n(z)={z^n}/{\|z^n\|_{\varphi,p}},
$
since the computations are the same as those carried out at the end of Lemma \ref{zn}

\end{proof}

\subsection{Canonical products estimates} 
We begin with the following lemma
\begin{lem}\label{lem22} We have 
$$
   - p\sum_{k=1}^m y_k  =- \psi'(y_m)y_m +\psi(y_m)-\frac{p}{2}y_m(1+o(1)) + O(1).
$$
\end{lem}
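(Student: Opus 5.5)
We have $\psi'(y_k)=pk+2$, so $p = \psi'(y_k)-\psi'(y_{k-1})$ for $k\ge 1$. The plan is to read $-p\sum_{k=1}^m y_k$ as a Riemann–Stieltjes sum for $-\int t\,d\psi'(t)$ and to control the error with the regularity conditions \eqref{conditions}.

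\emph{Step 1 (reduction to an integral).} Write
\[
p\sum_{k=1}^m y_k=\sum_{k=1}^m y_k\bigl(\psi'(y_k)-\psi'(y_{k-1})\bigr).
\]
Compare this with
\[
\int_{y_0}^{y_m} t\,\psi''(t)\,dt=\bigl[t\psi'(t)-\psi(t)\bigr]_{y_0}^{y_m}=y_m\psi'(y_m)-\psi(y_m)+O(1).
\]
It then remains to show that
\[
E_m:=\sum_{k=1}^m\int_{y_{k-1}}^{y_k}(y_k-t)\psi''(t)\,dt=\frac p2\,y_m(1+o(1))+O(1),
\]
since $p\sum_k y_k=\int t\psi''+E_m$.

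\emph{Step 2 (local computation).} On $[y_{k-1},y_k]$, set $h_k=y_k-y_{k-1}$. Since $\int_{y_{k-1}}^{y_k}\psi''=p$, we have $h_k\asymp p/\psi''(y_k)$ up to a factor controlled by the variation of $\psi''$ on the interval. The condition $|\psi'''|=O((\psi'')^{5/3})$ gives
\[
|\psi''(t)-\psi''(y_k)|\lesssim h_k(\psi'')^{5/3}\asymp (\psi'')^{2/3}=o(\psi'')
\]
on that interval. This uses $\psi''$ non-increasing with $\psi''\to 0$; if $\psi''$ stays bounded below, the relative error is still $O(h_k(\psi'')^{2/3})$, which we handle the same way. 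Hence $\psi''(t)=\psi''(y_k)(1+\epsilon_k)$ with $\epsilon_k\to0$, and therefore
\[
\int_{y_{k-1}}^{y_k}(y_k-t)\psi''(t)\,dt=\tfrac12 h_k^2\psi''(y_k)(1+o(1))=\tfrac12 p\,h_k(1+o(1)).
\]

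\emph{Step 3 (summation).} Summing over $k$ and using $\sum_{k\le m}h_k=y_m-y_0$ together with $\epsilon_k\to0$ (a Cesàro-type argument: the first finitely many terms contribute $O(1)$) yields
\[
E_m=\tfrac p2(y_m-y_0)(1+o(1))+O(1)=\tfrac p2 y_m(1+o(1))+O(1).
\]
The sign convention makes $-p\sum_k y_k=-\psi'(y_m)y_m+\psi(y_m)-\tfrac p2 y_m(1+o(1))+O(1)$, as claimed.

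The main obstacle is Step 2: justifying that $\psi''$ is essentially constant on each gap $[y_{k-1},y_k]$ uniformly in $k$. This is exactly where the $5/3$ condition is used. The plan is to write $\psi''(t)^{-2/3}$ and observe that its derivative is $O(1)$, so $\psi''$ changes by a relative amount $O(h_k(\psi'')^{2/3})=O((\psi'')^{-1/3})$. That quantity is small only when $\psi''$ is large; when $\psi''$ is small, one instead uses monotonicity of $\psi''$ directly, via a mean-value/Taylor comparison of $h_k$ and $h_{k+1}$.
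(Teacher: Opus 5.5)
Your Step 1 is an exact identity, and Step 3 is a routine Cesàro argument. The route is the paper's in another variable: the paper applies a trapezoid/Euler--Maclaurin correction to $f(t)=(\psi')^{-1}(pt+2)$ in the index variable, and you apply the same correction in the $t$-variable.

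\textbf{The gap is Step 2:} the claim that $\psi''(t)/\psi''(y_k)\to 1$ uniformly on $[y_{k-1},y_k]$.

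\emph{The $5/3$ condition cannot give this.} It only bounds the relative variation by $O\bigl(h_k(\psi'')^{2/3}\bigr)=O\bigl((\psi'')^{-1/3}\bigr)$. Since $\psi''$ is non-increasing, it is bounded, so this bound is never small. Your intermediate assertion $(\psi'')^{2/3}=o(\psi'')$ is backwards when $\psi''\to 0$. When $\psi''\to L>0$ the relative constancy does hold, but by monotone convergence, not by the $5/3$ bound.

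\emph{Monotonicity alone cannot rescue the case $\psi''\to 0$.} Take $\psi(t)=t\log t-t$, i.e. $\varphi(r)=\frac1p(\log r\,\log\log r-\log r)$ for large $r$. Then $\psi'=\log t\to\infty$, $\psi''=1/t$ is positive and decreasing, and $|\psi'''|=t^{-2}=O(t^{-5/3})$, so \eqref{conditions} holds. But $y_k=e^{pk+2}$, so $\psi''(y_{k-1})/\psi''(y_k)=e^p$ and $h_{k+1}/h_k=e^p$ for every $k$.

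Worse, the conclusion of the lemma itself fails for this weight. On one hand $p\sum_{k=1}^m y_k=\frac{p}{1-e^{-p}}\,y_m+O(1)$. On the other hand the lemma predicts $\psi'(y_m)y_m-\psi(y_m)+\frac p2 y_m(1+o(1))=(1+\frac p2)y_m+o(y_m)$. The difference is $\bigl(\frac p2\coth\frac p2-1\bigr)y_m$, which is of exact order $y_m$. So no argument from \eqref{conditions} alone can close Step 2.

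The paper's proof contains the same hidden assumption. Its expansion $f(t)=f(m)+f'(m)(t-m)(1+o(1))$ on $[m-1,m]$, asserted for every increasing $C^1$ function, is precisely the relative constancy of $\psi''$ on the gaps. It fails for $f(t)=e^{pt+2}$ from the example above.

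\textbf{What is missing in both arguments} is an extra regularity hypothesis, for instance $\psi'''=o\bigl((\psi'')^2\bigr)$, i.e. $(1/\psi'')'\to 0$. With it, since $h_k\psi''(y_k)\le\int_{y_{k-1}}^{y_k}\psi''=p$, you get $\sup_{t\in[y_{k-1},y_k]}\bigl|\psi''(y_k)/\psi''(t)-1\bigr|\le p\sup_{s\ge y_{k-1}}|\psi'''(s)|/\psi''(s)^2\to0$. Your Steps 2 and 3 then go through verbatim. This hypothesis holds for the model cases $\varphi=(\log^+ r)^\alpha$ with $1<\alpha\le 2$.
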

\begin{proof}
 If $f$ is an increasing $C^1$- function, then 
$f(t) = f(m) + f'(m)(t-m)(1+o(1))
$, and 
\begin{align*}
    \int_{m-1}^m f(t)dt = f(m)+f'(m)\left(\frac{-1}{2}+o(1)\right),
\end{align*}
whence 
\begin{align*}
    \sum_{k=1}^m f(k) & = \int_0^mf(t) dt +\left(\frac{1}{2}+o(1)\right) \sum_{k=1}^m f'(m) \\
    & = \int_0^mf(t) dt +\left(\frac{1}{2}+o(1)\right) \int_0^mf'(t)dt   \\
    & = \int_0^mf(t) dt +\left(\frac{1}{2}+o(1)\right) (f(m)-f(0)).
\end{align*}
Applying this formula to the function $f(t)=(\psi')^{-1}(pt+2)$,  we get
\begin{align}\label{a21}
    \sum_{k=1}^m y_k & =  \sum_{k=1}^m (\psi')^{-1}(pk+2) 
     = \int_0^m(\psi')^{-1}(pt+2)dt + \frac{y_m}{2}(1+o(1))
     \end{align}
On the other hand, 
\begin{align}\label{a22}
    \int_0^{m} (\psi')^{-1}(pt+2)dt & = \int_{y_0}^{y_{m}} x\psi''(x)\frac{dx}{p} \nonumber\\
    & = \frac{y_{m}}{p}\psi'(y_{m}) -\frac{\psi(y_m)}{p} + O(1). %\\
\end{align}
Thus, we obtain the desired result  by \eqref{a21} and \eqref{a22}, which completes the proof of the lemma.

\end{proof}

Let 
\[
\Gamma :=\left\{\gamma_n:=e^{y_n+\delta_n}e^{i\theta_n}\quad :\ n\geq 0\right\}
\]
We {associate to $\Gamma$} the canonical product 
\begin{equation}\label{G-function}
    G_\Gamma(z) := \prod_{\gamma\in\Gamma} \left(1-\frac{z}{\gamma}\right),\quad z\in\C.
\end{equation}
For every positive integer $N$, we define the following quantity, provided that the limit exists 
\begin{equation}
    \Delta_N := \limsup_{n\rightarrow\infty} \frac{1}{y_{n+N}-y_n} \left|\sum_{k=n+1}^{n+N}\delta_k\right|.
\end{equation}
\begin{lem}\label{lem3.3}
The infinite product in \eqref{G-function} converges uniformly on every compact set of the complex plane $\C$ to an analytic function $G_\Gamma$. Moreover, for every positive integer $N$ and every $\varepsilon>0$, there exists $C=C(N,\varepsilon)\geq 1$ such that the following estimates hold:
\begin{equation}
    \label{estim-G-fct}
   \frac{1}{C}\frac{\dist(z,\Gamma)^p}{(1+|z|)^{2+\frac{p}{2}+p\Delta_N+\varepsilon}} \le  \left|G_\Gamma(z)\right|^p e^{-p\varphi(z)}\le C \frac{\dist(z,\Gamma)^p}{(1+|z|)^{2+\frac{p}{2}-p\Delta_N-\varepsilon}},\quad z\in\C, 
\end{equation}
and 
\begin{equation}\label{der}
    \frac{1}{C}\frac{1}{(1+|\gamma|)^{2+\frac{p}{2}+p\Delta_N+\varepsilon}} \le \left|G'_\Gamma(\gamma)\right|^pe^{-p\varphi(\gamma)} \le C \frac{1}{(1+|\gamma|)^{2+\frac{p}{2}-p\Delta_N-\varepsilon}},\quad \gamma\in\Gamma.
\end{equation}

\end{lem}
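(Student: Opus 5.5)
Convergence of the product is immediate: since $\psi'(y_n)=pn+2$ and $\psi''$ is non-increasing, $y_n$ grows at least linearly in $n$. With the boundedness of $\delta_n$ that we assume throughout (it is implied by hypothesis (2) of Theorem \ref{thm1}), this gives $|\gamma_n|\ge e^{cn}$. Hence $\sum 1/|\gamma_n|<\infty$, and the product converges uniformly on compacts.

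For the estimate \eqref{estim-G-fct}, I would work with $\log|G_\Gamma(z)|$. Write $|z|=e^t$ and let $m$ be the index with $|\gamma_m|\le |z|<|\gamma_{m+1}|$, or the closest index. Split
\[
\log|G_\Gamma(z)|=\sum_{k\le m}\log\Bigl|1-\frac{z}{\gamma_k}\Bigr|+\sum_{k>m}\log\Bigl|1-\frac{z}{\gamma_k}\Bigr|.
\]
Let $\gamma_{k_0}$ be the point nearest to $z$. For $k\ne k_0$ in a bounded window of indices around $m$, each term is bounded above and below by constants. This uses the log-separation implicit in the setting, and otherwise only one factor gives $\log(\dist(z,\Gamma)/|z|)$. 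The single factor $k_0$ contributes $\log(\dist(z,\Gamma)/|\gamma_{k_0}|)$.

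For $k<m$ far from $m$, I would write $\log|1-z/\gamma_k|=t-y_k-\delta_k+O(|\gamma_k|/|z|)$. For $k>m$ far from $m$, the term is $O(|z|/|\gamma_k|)$. The geometric decay of $|\gamma_k|/|z|$ and $|z|/|\gamma_k|$ away from the index $m$ makes these errors sum to $O(1)$, since consecutive $y_k$ differ by $y_{k+1}-y_k\ge p/\psi''(0)>0$. This leaves
\[
\log|G_\Gamma(z)|=mt-\sum_{k\le m}y_k-\sum_{k\le m}\delta_k+\log\frac{\dist(z,\Gamma)}{|z|}+O(1).
\]

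Next I would invoke Lemma \ref{lem22}, which gives $-p\sum_{k\le m}y_k=-(pm+2)y_m+\psi(y_m)-\frac p2 y_m(1+o(1))+O(1)$. Multiplying by $p$ and subtracting $\psi(t)=p\varphi(z)$ yields
\[
p\log|G|-p\varphi=p\log\dist(z,\Gamma)-(p+2)t-\frac p2 y_m(1+o(1))+\bigl[(pm+2)(t-y_m)-(\psi(t)-\psi(y_m))\bigr]-p\sum_{k\le m}\delta_k+O(1).
\]
The bracket is $\le 0$ by convexity, since $\psi'(y_m)=pm+2$. I need it to be $O(1)$. This holds because $|t-y_m|\lesssim y_{m+1}-y_m\asymp 1/\psi''(y_m)$ and $\psi''(\xi)\,(t-y_m)^2\lesssim 1/\psi''$ stays bounded, provided $\psi''$ does not decay too fast. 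The condition $|\psi'''|=O((\psi'')^{5/3})$ gives comparability of $\psi''$ over windows of length $1/\psi''$, which keeps these constants uniform.

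Since $t=y_m+O(1)$ and the $o(1)y_m$ term is absorbed into $\varepsilon t$, we get $-(p+2)t+pt-\frac p2 t=-(2+\frac p2)t$. This matches the exponent in \eqref{estim-G-fct} apart from the $\delta$ sum.

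The main obstacle is the term $\sum_{k\le m}\delta_k$. I would cut $\{1,\dots,m\}$ into blocks of $N$ consecutive indices. For $n\ge n_0(\varepsilon)$, the definition of $\Delta_N$ gives $|\sum_{k=n+1}^{n+N}\delta_k|\le(\Delta_N+\varepsilon')(y_{n+N}-y_n)$. Summing telescopically, the full blocks give at most $(\Delta_N+\varepsilon')y_m$. The incomplete last block and the initial indices contribute $O(1)$ by boundedness of $\delta_k$. Absorbing $\varepsilon'$ into $\varepsilon$, this produces the exponents $2+\frac p2\pm p\Delta_N\pm\varepsilon$.

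For \eqref{der}, I would apply the same computation to $G_\Gamma(z)/(z-\gamma)$ at $z=\gamma$. The nearest-point factor $\dist(z,\Gamma)/|\gamma_{k_0}|$ is replaced by $1/|\gamma|$, which is exactly the former bound with $\dist(z,\Gamma)$ replaced by $1$.
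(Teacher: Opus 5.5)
Your route is the paper's: reduce $\log|G_\Gamma(z)|$ to the zeros inside $|z|$, apply Lemma~\ref{lem22}, compare the tangent line $\psi(y_m)+\psi'(y_m)(t-y_m)$ with $\psi(t)$, and cut $\sum_{k\le m}\delta_k$ into blocks of length $N$. The execution has two problems.

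The first is a counting slip. The points $\gamma_0,\dots,\gamma_m$ are $m+1$ zeros in $\{|w|\le |z|\}$. So if the nearest factor is written as $\log(\dist(z,\Gamma)/|z|)$, the linear term must be $(m+1)t$. Equivalently, $\log|G_\Gamma(z)|=mt-\sum_{k=0}^m(y_k+\delta_k)+\log\dist(z,\Gamma)+O(1)$, which is \eqref{estim1} divided by $p$. Your formula is short by $t$, so taken literally it yields the exponent $2+\frac{3p}{2}$. The term $+pt$ you insert when collecting exponents has no source in your display; it only undoes this slip.

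The second problem is more substantive. Your $O(1)$ claims fail when $\psi''\to0$, for instance for $\varphi(r)=(\log r)^\alpha$, $1<\alpha<2$, where $\psi''(t)\asymp t^{\alpha-2}$. This is one of the paper's model cases.

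\emph{The implication you quote is reversed.} Since $\psi''$ is non-increasing, bounded $\delta_n$ implies condition (2) of Theorem~\ref{thm1}. But (2) only gives $|\delta_n|\lesssim 1/\psi''(y_n)$. Real sequences such as $\gamma_n=e^{y_n+c/\psi''(y_n)}$ with $c$ small satisfy (1)--(3) while $\delta_n\asymp y_n^{2-\alpha}\to\infty$, so the lemma must cover them.

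\emph{Your bracket is not bounded.} It equals $-(\psi(t)-\psi(y_m)-\psi'(y_m)(t-y_m))$. Since $\psi'(y_{m+1})-\psi'(y_m)=p$, on $[y_m,y_{m+1}]$ one has $0\le \psi(t)-\psi(y_m)-\psi'(y_m)(t-y_m)\le p\,(y_{m+1}-y_m)$, while $y_{m+1}-y_m\ge p/\psi''(y_m)$. For $(\log r)^\alpha$ it really is of order $y_m^{2-\alpha}$ in the middle of the gap. The same growth makes your ``bounded window'' terms with $k<m$ unbounded; what is bounded there is $\log|1-z/\gamma_k|-\log^+|z/\gamma_k|$.

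So as written, the argument only covers $\psi''$ bounded below with bounded $\delta_n$, essentially the $(\log r)^2$ case.

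\emph{The repair.} Replace these $O(1)$'s by bounds of the size of the local gaps. This applies to the bracket, to $t-y_m$, and to the incomplete last block, where $|\delta_k|\le M/\psi''(y_k)\le M(y_{k+1}-y_k)/p$. Then absorb them into $\varepsilon t$ using $y_{m+1}-y_m=o(y_m)$. This is exactly what the paper does: its $o(y_m)$ for the last block in \eqref{estim3}, and its tacit identification of $\psi(y_m)+\psi'(y_m)(t-y_m)$ with $\psi(t)$ up to $o(t)$.

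This gap condition is not a consequence of \eqref{conditions}: for $\psi(t)=t\log t$ one gets $y_{m+1}-y_m=(e^p-1)y_m$. It is the same hidden input as the $(1+o(1))$ in Lemma~\ref{lem22}. So state it as a standing assumption rather than trying to extract it from the $5/3$-condition.
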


\begin{proof}
Let $z\in\C$ and denote by {$m$ the integer} such that $|\gamma_m|\leq |z|<|\gamma_{m+1}|$. Without loss of generality, we may assume that $\dist(z,\Gamma)=|\gamma_m-z|$. Let $t = \log |z|$, then 
\begin{align}
    \log|G_\Gamma(z)|^p & = p\sum_{n=0}^{m-1}\log\left|1-\frac{z}{\gamma_n}\right| + p\log\left|1-\frac{z}{\gamma_m}\right|+O(1)\nonumber\\
    & = p\sum_{n=0}^m\log\left|\frac{z}{\gamma_n}\right| + p\log\dist(z,\Gamma)-p\log|z| + O(1)\nonumber\\
    & = pmt - p\sum_{n=0}^my_n -p\sum_{n=0}^m\delta_n + p\log\dist(z,\Gamma) + O(1).\label{estim1}
\end{align}
On the other hand from Lemma \ref{lem22}, we have 
\begin{equation}\label{estim22}
   - p\sum_{k=1}^m y_k  =- \psi'(y_m)y_m +\psi(y_m)-\frac{p}{2}y_m(1+o(1)) + O(1).
\end{equation}
For $N$ a positive integer, there exists a positive integer $q$ and $0\le r<N$  such that $m=qN+r$. Then 
\begin{align}
    \left|\sum_{n=0}^m \delta_n \right| & = \left|\sum_{k=0}^{q-1}\sum_{n=kN+1}^{N(k+1)} \delta_n+\sum_{n=qN+1}^m \delta_n\right| \nonumber\\
    & \le \sum_{k=0}^{q-1}\left|\sum_{n=kN+1}^{N(k+1)} \delta_n\right|+\left|\sum_{n=qN}^m \frac{\delta_{n}\psi''(y_n)}{\psi''(y_n)}\right|\nonumber\\
    & \le \left(\Delta_N+o(1)\right)\sum_{k=0}^{q-1} (y_{kN+N}-y_{kN}) + o\left(y_m\right)\nonumber\\
    &  = \left(\Delta_N+o(1)\right)t.\label{estim3} 
\end{align}
Combining \eqref{estim1}, \eqref{estim22} and \eqref{estim3} we obtain
\begin{align*}
     \log|G_\Gamma(z)|^p & \le \psi'(y_m)(t-y_m)+\psi(y_m)+\left(-2-\frac{p}{2}+ p\Delta_N+o(1)\right)t + p\log\dist(z,\Gamma) + O(1) \\
     & = \psi(t) +\left(-2-\frac{p}{2}+ p\Delta_N+o(1)\right)t + p\log\dist(z,\Gamma) + O(1),
\end{align*}
and similarly,
\begin{align*}
     \log|G_\Gamma(z)|^p & \ge  \psi(t) +\left(-2-\frac{p}{2}- p\Delta_N+o(1)\right)t + p\log\dist(z,\Gamma) + O(1).
\end{align*}
Hence, we obtain
\begin{equation}
   \frac{\dist(z,\Gamma)^p}{(1+|z|)^{2+\frac{p}{2}+p\Delta_N+\varepsilon}}\lesssim  \left|G_\Gamma(z)\right|^p e^{-p\varphi(z)}\lesssim \frac{\dist(z,\Gamma)^p}{(1+|z|)^{2+\frac{p}{2}-p\Delta_N-\varepsilon}},\quad z\in\C.
\end{equation}
{The proof of \eqref{der} is similar and therefore we omit it here. }
\end{proof}

%%%%%%%%%%%%%%%%%%%%%%%%%%%%%%%%%%%%%%%%%%%%%%%%%%%%%%%%%%%%%%%%%%%
%%%%%%%%%%%%%%%%%%%%%%%%%%%%%%%%%%%%%%%%%%%%%%%%%%%%%%%%%%%%%%%%%%%
%%%%%%%%%%%%%%%%%%%%%%%%%%%%%%%%%%%%%%%%%%%%%%%%%%%%%%%%%%%%%%%%%%%

%%%%%%%%%%%%%%%%%%%%%%%%%%%%%%%%%%%%%%%%%%%%%%%%%%%%%%%%%
%%%%%%%%%%%%%%%%%%%%%%%%%%%%%%%%%%%%%%%%%%%%%%%%%%%%%%%%%
%%%%%%%%%%%%%%%%%%%%%%%%%%%%%%%%%%%%%%%%%%%%%%%%%%%%%%%%%

\begin{lem}\label{uniq}
    The sequence $\Gamma$ is a minimal uniqueness set {for $\cF^p_\varphi$, $0<p<\infty$,} whenever $\Delta_N<\frac{1}{2}$, for some positive integer $N.$
\end{lem}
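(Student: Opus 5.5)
We prove the two parts of Lemma~\ref{uniq} separately: first that $\Gamma$ is a uniqueness set, then that it is minimal. Both rest on the two-sided estimate \eqref{estim-G-fct} for the canonical product $G_\Gamma$. Fix $N$ with $\Delta_N<\frac12$ and choose $\varepsilon>0$ so small that $p\Delta_N+\varepsilon<\frac p2$.

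\emph{Minimality.} For $\gamma\in\Gamma$ put $G_\gamma(z)=G_\Gamma(z)/(z-\gamma)$. This is entire, vanishes on $\Gamma\setminus\{\gamma\}$, and satisfies $G_\gamma(\gamma)=G'_\Gamma(\gamma)\neq0$. Away from $\Gamma$ we have $\dist(z,\Gamma)\le|z-\gamma|+O(1)$ outside a neighbourhood of $\gamma$. Writing the upper bound of \eqref{estim-G-fct} as
\[
|G_\Gamma(z)|^pe^{-p\varphi(z)}\lesssim \dist(z,\Gamma)^p(1+|z|)^{-2-\frac p2+p\Delta_N+\varepsilon},
\]
and using $\dist(z,\Gamma)\le |z-\gamma|$, we get
\[
|G_\gamma(z)|^pe^{-p\varphi(z)}\lesssim (1+|z|)^{-2-\frac p2+p\Delta_N+\varepsilon}.
\]
Since $-\frac p2+p\Delta_N+\varepsilon<0$, the exponent is strictly less than $-2$, so this is integrable over $\C$. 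Near $\gamma$, $G_\gamma$ is bounded. Hence $G_\gamma\in\cF^p_\varphi$, and $\Gamma\setminus\{\gamma\}$ is not a uniqueness set.

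\emph{Uniqueness.} Let $f\in\cF^p_\varphi$ vanish on $\Gamma$. Then $h=f/G_\Gamma$ is entire. We bound $h$ on circles $|z|=R$ that stay away from $\Gamma$: choose radii $R_j\to\infty$ with $\dist(z,\Gamma)\gtrsim$ (a fixed fraction of the local gap) on $|z|=R_j$. A suitable choice is $R_j$ roughly halfway between $|\gamma_m|$ and $|\gamma_{m+1}|$ in the logarithmic scale, or simply any circle whose distance to $\Gamma$ is at least $c/(1+|z|)^{\varepsilon}$; such circles exist since only finitely many points of $\Gamma$ lie in each dyadic annulus. On such a circle the lower bound in \eqref{estim-G-fct} gives
\[
|G_\Gamma(z)|^{-p}e^{p\varphi(z)}\lesssim (1+|z|)^{2+\frac p2+p\Delta_N+2\varepsilon}.
\]
For $f$ we use Lemma~\ref{lem3.1}:
\[
|f(z)|^pe^{-p\varphi(z)}\lesssim \psi''(\log|z|)^{1/2}(1+|z|)^{-2}\|f\|^p,
\]
and $\psi''$ is bounded because it is non-increasing. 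Combining, $|h(z)|^p\lesssim(1+|z|)^{\frac p2+p\Delta_N+2\varepsilon}$ on these circles. The exponent divided by $p$ is $\frac12+\Delta_N+2\varepsilon/p<1$, so by the maximum principle $h$ is a polynomial of degree $0$, i.e.\ a constant $c$. Thus $f=cG_\Gamma$. But the lower bound in \eqref{estim-G-fct}, together with the fact that most of $\C$ satisfies $\dist(z,\Gamma)\gtrsim 1$, gives
\[
|G_\Gamma|^pe^{-p\varphi}\gtrsim(1+|z|)^{-2-\frac p2-p\Delta_N-\varepsilon}
\]
on a set of large measure in each annulus. I would rather use the sharper per-annulus computation, namely the log formula \eqref{estim1} directly, which shows $\int|G_\Gamma|^pe^{-p\varphi}=\infty$. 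Indeed, by Lemma~\ref{zn} combined with the product structure, $|G_\Gamma(z)|$ on $|z|=e^{y_m}$ is comparable to $|z|^m/\prod_{n\le m}|\gamma_n|$, and this yields a lower bound of order $1/|z|^2$ after integrating in $\theta$. A cleaner route: $f=cG_\Gamma\in\cF^p_\varphi$ would make $G_\Gamma/(z-\gamma)$ lie in $\cF^p_\varphi$ with extra decay, and then $(z-\gamma^*)G_\Gamma/(z-\gamma)\in\cF^p_\varphi$ via the upper bound. I expect the honest argument is that $G_\Gamma\notin\cF^p_\varphi$, so $c=0$.

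\emph{Main obstacle.} The delicate step is exactly this last one: showing $G_\Gamma\notin\cF^p_\varphi$ (equivalently, that the space cannot contain functions with one more zero). The crude exponents in \eqref{estim-G-fct} lose $p\Delta_N+\varepsilon$ in the wrong direction. I would first try to refine the computation in Lemma~\ref{lem3.3}: integrate over the annulus $|z|\in[|\gamma_m|,|\gamma_{m+1}|]$, where $\log|G_\Gamma|^p-\psi(t)$ behaves like $\psi'(y_m)(t-y_m)-\psi(t)+\psi(y_m)$ plus the linear term. This is a Gaussian-type bump of width $\psi''^{-1/2}$, so the angular-radial integral per annulus is $\gtrsim e^{-p\sum_{n\le m}\delta_n}e^{-\frac p2 y_m}\cdot e^{\frac p2 y_m}$, which is bounded below along a subsequence. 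Summing over $m$ then diverges. This uses the term $-\frac p2y_m$ from Lemma~\ref{lem22} balanced against the partial-sum structure, and is where the hypothesis $\Delta_N<\frac12$ (rather than just its size) must be exploited carefully.
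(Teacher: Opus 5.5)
Your minimality argument is the paper's, and writing it with $\dist(z,\Gamma)\le|z-\gamma|$ is actually cleaner. The factorization $f=hG_\Gamma$ with a growth bound on $h$ is also the paper's first step. You even get $h$ constant directly, whereas the paper only gets a polynomial.

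The gap is the final step of the uniqueness part: you never prove $c=0$, i.e.\ that $G_\Gamma\notin\cF^p_\varphi$. Using only $\dist(z,\Gamma)\gtrsim1$ you obtain the minorant $(1+|z|)^{-2-\frac p2-p\Delta_N-\varepsilon}$, which is integrable and proves nothing. The alternatives you then sketch (the Gaussian-bump computation with a lower bound ``along a subsequence'', and the ``one more zero'' route) are not carried out. As written, the proposal does not establish that $\Gamma$ is a uniqueness set.

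Your diagnosis that the crude exponents in \eqref{estim-G-fct} ``lose in the wrong direction'' is mistaken; what you lost is the factor $\dist(z,\Gamma)^p$. You already use the fact that each annulus $R\le|z|\le 2R$ contains boundedly many points of $\Gamma$. That same fact gives, in every such annulus, a sub-annulus of area $\asymp R^2$ on which $\dist(z,\Gamma)\gtrsim|z|$. There the lower bound in \eqref{estim-G-fct} yields $|G_\Gamma(z)|^pe^{-p\varphi(z)}\gtrsim|z|^{-2+\frac p2-p\Delta_N-\varepsilon}$. Since $\frac p2-p\Delta_N-\varepsilon>0$ (the very inequality you used for minimality), the integral over $\C$ diverges, so $c=0$.

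This is exactly the paper's argument. It bounds $|F|^pe^{-p\varphi}$ from below by $|z|^{p+pm}(1+|z|)^{-2-\frac p2-p\Delta_N-\varepsilon}$, where $|z|^p$ comes from $\dist(z,\Gamma)^p$ and $|z|^{pm}$ from a polynomial $h$ of degree $m$. It then derives the contradiction $\Delta_N+\varepsilon/p>\frac12+m$.

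There is an even shorter fix inside your own framework. Choose the circles $|z|=R_j$ with $\dist(z,\Gamma)\gtrsim|z|$ rather than $\gtrsim(1+|z|)^{-\varepsilon}$. Your estimate then becomes $|h(z)|^p\lesssim(1+|z|)^{-p(\frac12-\Delta_N)+\varepsilon}\to0$ on those circles. The maximum principle gives $h\equiv0$ at once, with no integrability step at all.
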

\begin{proof}
  Suppose that $\Delta_N<\frac{1}{2}$, for some positive integer $N.$  If $F\in \cF^p_\varphi$ that vanishes on $\Gamma$. Then $F=hG_\Gamma$, for some entire function $h$.
    Using Lemma \ref{lem3.1} and Lemma \ref{lem3.3},  we obtain
    \[
    \frac{\left(\psi''(\log|z|)\right)^{1/2}}{1+|z|^2} \gtrsim |F(z)|^p e^{-p\varphi(z)} \gtrsim \frac{\dist(z,\Gamma)^p}{(1+|z|)^{2+\frac{p}{2}+p\Delta_N+\varepsilon}}|h(z)|^p.
    \]
    It follows that $h$ must be a polynomial. If it is not the zero function, we denote $m$ its degree's. Then
    \begin{align*}
       \infty >\int_\C |F(z)|^pe^{-p\varphi(z)}dm(z)\gtrsim \int_\C \frac{|z|^{p+pm}}{(1+|z|)^{2+\frac{p}{2}+p\Delta_N+\varepsilon}}dm(z),  
           \end{align*}
which implies $\displaystyle  \Delta_N+\frac{\varepsilon}{p}>\frac{1}{2}+m.$
This  is impossible, since  $\displaystyle\Delta_N < \frac{1}{2}$, hence $h$ is zero, and therefore $F=0$.

Now, if $\gamma\in\Gamma$, the sequence $\Gamma\setminus\{\gamma\}$ is a zero set for $\cF^p_\varphi$. Indeed, we have 
\begin{align*}
    \int_\C \left|\frac{G_\Gamma(z)}{z-\gamma}\right|^p e^{-p\varphi(z)} dm(z) & \lesssim \int_\C \frac{\dist(z,\Gamma)^p}{(1+|z|)^{p+2+\frac{p}{2}-p\Delta_N-\varepsilon}} dm(z) \\
    & \lesssim \int_\C \frac{1}{(1+|z|)^{2+\frac{p}{2}-p\Delta_N-\varepsilon}} dm(z). %\\
  \end{align*}
Since $\Delta_N<\frac{1}{2}$, we have  $2<2+\frac{p}{2}-p\Delta_N$ and hence  the  integral on the right-hand side converges. Therefore  $g_\gamma(z):= \frac{G_\Gamma(z)}{z-\gamma}$ belongs to  $\cF^p_\varphi$, which implies that $\Gamma\setminus\{\gamma\}$ is a zero set for $\cF^p_\varphi.$ This completes the proof.

 \end{proof}
\section{Reference complete interpolating sequence for $\cF^p_\varphi$.}\label{sect3}
To construct a complete reference sequence for $\cF^p_\varphi$, we first require the following lemma, which is taken from \cite[Lemma 3.3]{BBB17} for the case $p=2$. 
Let \begin{equation}\label{fonctionl}
   \ell(t) = (pm+2)t-p\sum_{n=0}^m y_k,\quad y_m\le t<y_{m+1}.
\end{equation}
\begin{lem}\label{lemBBB}
    Given a small positive $\delta$, the following hold:
    \begin{enumerate}
        \item $\displaystyle\int_0^\infty e^{\frac{1}{p}(\ell(t)-\psi(t))}dt<\infty.$
        \item $\displaystyle\int_{y_n-\delta}^\infty e^{\frac{1}{p}(\ell(t)-\psi(t))}dt \lesssim e^{\frac{1}{p}(\ell(y_n)-\psi(y_n))}\left(\psi''(y_n)\right)^{-1/2}$,
        \item $\displaystyle\int_0^{y_n-\delta}e^{\frac{1}{p}(\ell(t)-\psi(t)+pt)}dt\lesssim e^{\frac{1}{p}(\ell(y_n)-\psi(y_n)+py_n)}\left(\psi''(y_n)\right)^{-1/2}$
        \item $\displaystyle\sum_{k=0}^n e^{\frac{1}{p}(\psi(y_k)-\ell(y_k))}\left(\psi''(y_k)\right)^{1/2} \asymp e^{\frac{1}{p}(\psi(y_n)-\ell(y_n))}\left(\psi''(y_n)\right)^{1/2}$
        \item $\displaystyle\sum_{k=n}^\infty e^{\frac{1}{p}(\psi(y_n)-\ell(y_n)-py_n)}\left(\psi''(y_n)\right)^{1/2} \asymp e^{\frac{1}{p}(\psi(y_n)-\ell(y_n)-py_n)}\left(\psi''(y_n)\right)^{1/2}$.
    \end{enumerate}
\end{lem}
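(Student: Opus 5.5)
The plan is to study the single function $h(t):=\ell(t)-\psi(t)$ and to get every item from two facts: $h$ decreases with \emph{quadratic} (Gaussian) decay inside each gap $[y_m,y_{m+1}]$, and its values at the nodes form a geometric-type sequence. Write $\Delta_m:=y_{m+1}-y_m$.

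\textbf{Step 1: basic structure of $h$.} First, $\ell$ is continuous at the nodes. Indeed, at $y_{m+1}$ the two formulas $(pm+2)t-p\sum_{0}^{m}y_k$ and $(p(m+1)+2)t-p\sum_0^{m+1}y_k$ agree. On $[y_m,y_{m+1})$ we have $h'(t)=\psi'(y_m)-\psi'(t)\in[-p,0]$, because $\psi'(y_m)=pm+2$ and $\psi'(y_{m+1})=pm+p+2$. Hence $h$ is non-increasing and $h+pt$ is non-decreasing. Two consequences of the regularity conditions \eqref{conditions} follow:
\begin{itemize}
\item[(a)] Since $\psi''$ is non-increasing, $\psi'$ is concave. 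Writing $\int_{y_m}^{y_{m+1}}\psi''=p$, this gives $\Delta_m\asymp p/\psi''(y_m)$ and $\Delta_m\ge p/\psi''(0)>0$.
\item[(b)] Since $|\psi'''|=O((\psi'')^{5/3})$, $\psi''$ is comparable to $\psi''(y_m)$ on $[y_m-C\Delta_{m},y_{m+1}+C\Delta_m]$. In particular $\psi''(y_{m+1})\asymp\psi''(y_m)$.
\end{itemize}
By concavity of $\psi'$, for $t\in[y_m,y_{m+1}]$,
\[
h(y_m)-h(t)=\int_{y_m}^t(\psi'(s)-\psi'(y_m))\,ds\ \ge\ \tfrac12(\psi'(t)-\psi'(y_m))(t-y_m)\ \gtrsim\ \psi''(y_m)(t-y_m)^2 .
\]
The symmetric statement holds on the left of a node for $h+pt$, since $(h+pt)'=\psi'(y_{m-1})+p-\psi'(t)\ge0$ vanishes at $y_m$. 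Finally, integrating $h'$ over a full gap and using the same concavity gives the node increments
\[
h(y_m)-h(y_{m+1})\in\Bigl[\tfrac{p}{2}\Delta_m,\ p\Delta_m\Bigr],\qquad
(h+pt)\big|_{y_m}^{y_{m+1}}\in\Bigl[0,\ \tfrac{p}{2}\Delta_m\Bigr],
\]
with the second bracket bounded below by $c\,\Delta_m$ after a finer use of (b).

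\textbf{Step 2: items (4) and (5).} For (4), by the increment bound the terms $e^{-h(y_k)/p}$ grow at least by the factor $e^{\Delta_k/2}\ge e^{c}>1$ from one $k$ to the next. The weights $(\psi''(y_k))^{1/2}$ are non-increasing, but their ratios are controlled by (b). So the sum in (4) is dominated by a geometric series ending at its last term, which gives (4); the reverse inequality is trivial. For (5), I read the summand with index $k$. There $e^{-(h+pt)(y_k)/p}$ decreases geometrically by the analogous bound, so the sum is comparable to its first term.

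\textbf{Step 3: integrals (1)--(3).} For (2), split $[y_n-\delta,\infty)$ into $[y_n-\delta,y_n]$ and the gaps $[y_m,y_{m+1}]$, $m\ge n$.
\begin{itemize}
\item On the short piece, $h\le h(y_n)+p\delta$ because $h'\ge-p$. Its length $\delta$ is $\lesssim(\psi''(y_n))^{-1/2}$, since $\psi''$ is bounded above.
\item On each gap, the Gaussian bound of Step 1 gives
\[
\int_{y_m}^{y_{m+1}}e^{h/p}\lesssim e^{h(y_m)/p}(\psi''(y_m))^{-1/2}.
\]
\end{itemize}
The resulting sum over $m\ge n$ is geometric by Step 2, which yields (2). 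Item (3) is the mirror image, using the monotonicity of $h+pt$ and its Gaussian decay to the left of each node. Item (1) is (2) with $n=0$ together with the trivial bound on $[0,y_0]$.

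\textbf{Main obstacle.} The delicate point is Step 1(b). When $\psi''\to0$ the gaps $\Delta_m$ become large, so two comparisons need care: the Gaussian width $(\psi''(y_m))^{-1/2}$ has to be matched against the gap length, and the factors $(\psi''(y_k))^{\pm1/2}$ in the geometric sums must not destroy the geometric ratio $e^{\pm\Delta_k/2}$. I would first try to show that $\psi''$ changes by at most a bounded factor over a few gaps, by integrating $|(\psi'')^{-2/3}{}'|=O(1)$. This uses exactly the $5/3$ exponent. I expect this to absorb the ratio $(\psi''(y_{k+1})/\psi''(y_k))^{1/2}$ against $e^{c\Delta_k}$, which is large when $\Delta_k$ is.
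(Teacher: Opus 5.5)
The paper gives no proof of this lemma. It quotes it from Baranov--Belov--Borichev (Lemma 3.3 there, stated for $p=2$). Your self-contained scheme is the natural one: decay of $h=\ell-\psi$ inside each gap, geometric summation over the nodes, and the summand of (5) read with index $k$. You rightly flag Step 1(b) as the main obstacle, and it is in fact false under \eqref{conditions}. Several steps rest on it:
(a) the bound $h(y_m)-h(t)\gtrsim\psi''(y_m)(t-y_m)^2$ on the whole gap;
(b) the bound $\Delta_m\lesssim p/\psi''(y_m)$ in (a);
(c) the lower bound $c\,\Delta_m$ for the increment of $h+pt$;
(d) the control of $(\psi''(y_{k+1})/\psi''(y_k))^{1/2}$ in Step 2.

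Your proposed derivation cannot work. The hypothesis says exactly that $g=(\psi'')^{-2/3}$ is Lipschitz, so across a gap $g$ may change by $O(\Delta_m)$. Since $\Delta_m\ge p/\psi''(y_m)$, this is much larger than $g(y_m)=\psi''(y_m)^{-2/3}$ when $\psi''$ is small. Comparability at the scale of a gap would need $|\psi'''|=O((\psi'')^2)$.

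Here is a counterexample. Take $g$ smooth and non-decreasing with $0\le g'\le c$, alternating long flat stretches with stretches of slope $c$. Then $\psi''=g^{-3/2}$ satisfies \eqref{conditions} with $|\psi'''|\le\frac32 c(\psi'')^{5/3}$. On a sloped stretch, $\psi''$ drops from $a$ to an arbitrary $b<a$ while $\int\psi''=\frac{2}{c}(a^{1/3}-b^{1/3})<p/2$ once $a$ is small. So if $y_m$ is the last node before the drop, $y_{m+2}$ lies after it, and $\psi''(y_m)/\psi''(y_{m+2})\ge a/b$ is unbounded.

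The lemma itself survives if you use the hypothesis only on the scales where it is effective.

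\emph{(i) Per-gap bound.} Put $w_m=\psi''(y_m)^{-1/2}$. On $[y_m,y_m+w_m]$ the Lipschitz bound changes $g$ by $O(w_m)=O(\psi''(y_m)^{1/6})\,g(y_m)$, so $\psi''\asymp\psi''(y_m)$ there. For $t\in[y_m+w_m,y_{m+1}]$, monotonicity of $\psi'$ gives $h(y_m)-h(t)\ge(\psi'(y_m+w_m)-\psi'(y_m))(t-y_m-w_m)\gtrsim\psi''(y_m)^{1/2}(t-y_m-w_m)$. Hence $\int_{y_m}^{y_{m+1}}e^{h/p}\lesssim e^{h(y_m)/p}\psi''(y_m)^{-1/2}$, with no gap-scale comparability.

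\emph{(ii) Sums in (2) and (4).} Compare every term with the extreme term instead of comparing neighbours. For nodes $y_j<y_i$, the inequality $g(y_i)\le g(y_j)+C(y_i-y_j)$ gives $(\psi''(y_j)/\psi''(y_i))^{1/2}\le(1+C'(y_i-y_j))^{3/4}$. This is absorbed by the factor $e^{-(y_i-y_j)/2}$ coming from your increment bound for $h$, and $y_i-y_j\ge(i-j)p/\psi''(y_0)$. This is the precise form of your closing remark.

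\emph{(iii) Items (3) and (5).} No comparability is needed at all. The $\psi''$-weight moves in the favourable direction. The left-of-node decay of $h+pt$ uses only monotonicity of $\psi''$, as you say. The increment of $h+pt$ over $[y_k,y_{k+1}]$ is $\int_{y_k}^{y_{k+1}}(\psi'(y_{k+1})-\psi'(s))\,ds$. The integrand is convex, equals $p$ at $y_k$, and lies above its tangent there, so the increment is at least $p^2/(2\psi''(y_k))\ge p^2/(2\psi''(y_0))$.

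With these replacements your outline goes through.
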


The following theorem extend the result obtained in \cite[Theorem 1.2]{BBB17} and give similar result as in \cite[Theorem 2.6]{O21}
\begin{theo}\label{theoRef}
Let $0<p\le \infty$. For $0<p<\infty$, set
\[
\Sigma_{\varphi,p} := \Big\{\sigma_n = e^{y_n} : \psi'(y_n) = pn + 2,\ n \ge 0\Big\},
\]
and for $p=\infty$ define
\[
\Sigma_{\varphi,\infty} := \Sigma_{\varphi,2} \cup \{\sigma^*\}, \qquad \sigma^* \notin \Sigma_{\varphi,2}.
\]
Then $\Sigma_{\varphi,p}$ is a complete interpolating sequence for $\cF^p_\varphi$.
\end{theo}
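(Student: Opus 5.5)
We work with the generating function $G=G_{\Sigma_{\varphi,p}}$ and the Lagrange-type functions $g_n(z)=G(z)/(G'(\sigma_n)(z-\sigma_n))$. For $\Sigma_{\varphi,p}$ we have $\delta_n=0$, so $\Delta_N=0$ for every $N$. Lemma~\ref{uniq} then shows that $\Sigma_{\varphi,p}$ is a minimal uniqueness set for $\cF^p_\varphi$, $0<p<\infty$. The crude bounds of Lemma~\ref{lem3.3} lose a factor $(1+|z|)^{\varepsilon}$, which is too weak. We therefore replace them by the exact two-sided estimate
\[
|G(z)|^p \asymp \frac{\dist(z,\Sigma_{\varphi,p})^p}{|z|^{p}}\, e^{\ell(\log|z|)},
\]
which follows directly from the product as in \eqref{estim1}, with $\ell$ as in \eqref{fonctionl}. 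Similarly $|G'(\sigma_n)|^p\asymp \sigma_n^{-p}e^{\ell(y_n)}$. Lemma~\ref{lemBBB} is exactly the tool for controlling integrals of $e^{\frac1p(\ell-\psi)}$.

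\textbf{Interpolation} ($0<p<\infty$). Given data $v$ with $(v_n/\|L_{\sigma_n}\|)\in\ell^p$, we set $f=\sum_n v_n g_n$ and aim to show $\|f\|_{\varphi,p}^p\lesssim\sum|v_n|^p/\|L_{\sigma_n}\|^p$. The first step is to pass to radial integrals. For $|z|=e^t$, $\log|g_n(z)|$ is computed via $\ell(t)-\ell(y_n)$ plus $\log\bigl(\sigma_n\dist(z,\Sigma)/(|z||z-\sigma_n|)\bigr)$.

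We split into the regions $t<y_n-\delta$, $|t-y_n|\le\delta$ and $t>y_n+\delta$, and bound each using parts (2) and (3) of Lemma~\ref{lemBBB}. Together with \eqref{Lsigma}, this should give
\[
\int_{\C}|g_n|^pe^{-p\varphi}\,dm\lesssim \|L_{\sigma_n}\|^{-p}.
\]
Since the individual pieces are exponentially localized around $y_n$, summing them requires a Schur-type argument. For $p\le1$ we simply use $|\sum a_n|^p\le\sum|a_n|^p$. For $p>1$ we use the off-diagonal decay furnished by the tails of $e^{\frac1p(\ell-\psi)}$: the sums in (4) and (5) are geometric-like, so Hölder with the weights from (4)–(5) closes the estimate. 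Convergence of the series locally uniformly is handled by the same bounds.

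\textbf{Sampling.} Given $f\in\cF^p_\varphi$, we first show that $f=\sum f(\sigma_n)g_n$. The difference vanishes on $\Sigma_{\varphi,p}$ and lies in $\cF^p_\varphi$, so it is zero by uniqueness. For this we first truncate, or use the Cauchy integral of $f/G$ over circles $|z|=\sqrt{\sigma_n\sigma_{n+1}}$, where $|G|$ is large relative to $e^{\varphi}$ by the estimate above. The upper inequality $\|f\|^p\lesssim\sum|f(\sigma_n)|^p/\|L_{\sigma_n}\|^p$ is then exactly the interpolation estimate. The lower inequality $\sum|f(\sigma_n)|^p/\|L_{\sigma_n}\|^p\lesssim\|f\|^p$ is the main obstacle, because the points are sparse (one per ``shell'' of width $\asymp(\psi'')^{-1}$ in $\log|z|$), so the subharmonic argument of Lemma~\ref{lem3.1} does not give disjoint neighbourhoods. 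Instead we would argue via Taylor coefficients. Write $f=\sum a_kz^k$, so that $|f(\sigma_n)|\le\sum_k|a_k|\sigma_n^k$. Since $\|f\|^p$ dominates $|a_k|^p\|z^k\|^p$ (from the circle means, as in Lemma~\ref{zn}, up to constants when $p<1$ via $\omega$ convexity), the remaining task is to show that the matrix $\sigma_n^k\|z^k\|^{-1}/\|L_{\sigma_n}\|$ decays geometrically away from the diagonal $k=n$. This follows from the concavity of $k\mapsto ky_n-\frac1p(\log\|z^k\|^p)$, and its maximum near $k=n$ is governed by Lemma~\ref{zn} and the definition of $y_n$. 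A discrete Schur test then finishes. If the coefficient approach fails for $p>1$, the fallback is to use duality with the interpolation operator at exponent $p'$.

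\textbf{The case $p=\infty$.} Here $\Sigma=\Sigma_{\varphi,2}\cup\{\sigma^*\}$, and we use $G_\infty(z)=(z-\sigma^*)G_{\Sigma_{\varphi,2}}(z)$. The exact estimate gives $|G_\infty|e^{-\varphi}\asymp\dist(z,\Sigma)/(1+|z|)$, because the shift $+2$ in $\psi'(y_n)=2n+2$ with $\psi=2\varphi(e^t)$ produces the factor $|z|^{-1}$, which the extra factor $z-\sigma^*$ compensates. Uniqueness follows as in Lemma~\ref{uniq}, since $h$ must be constant and $|G_\infty|e^{-\varphi}$ does not tend to $0$. Interpolation uses $\sum|v_n|e^{-\varphi(\sigma_n)}|g_n(z)|e^{-\varphi(z)}\lesssim\sup_n|v_n|e^{-\varphi(\sigma_n)}$, with the sum bounded via the geometric decay of $e^{\frac12(\ell(t)-\ell(y_n))}$-type factors. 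Sampling follows from the Lagrange representation together with the same bound.
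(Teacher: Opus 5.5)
Your uniqueness step (Lemma \ref{uniq} with $\Delta_N=0$) and your interpolation step (Lagrange series controlled by Lemma \ref{lemBBB}, the $p$-triangle inequality for $p\le1$, Hölder for $p>1$) are the paper's proof, which consists of these two steps only. Your product estimate, however, has the wrong power of $|z|$. The product gives $|G(z)|^p\asymp e^{\ell(t)-2t}\dist(z,\Sigma_{\varphi,p})^p$ and $|G'(\sigma_n)|^p\asymp e^{\ell(y_n)-2y_n}$ (see \eqref{ggg}, \eqref{ggg1}), not $|z|^{-p}e^{\ell(t)}$. Since $\psi-\ell$ grows like $\frac p2 t$ (Lemma \ref{lem22}), your normalization would give an infinite bound for $\int_{\C}|g_n|^pe^{-p\varphi}\,dm$ when $p<4/3$.

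The genuine gap is the Carleson bound $\sum_n|f(\sigma_n)|^p/\|L_{\sigma_n}\|^p\lesssim\|f\|_{\varphi,p}^p$. Your coefficient argument only gives $|a_k|\,\|z^k\|_{\varphi,p}\lesssim\|f\|_{\varphi,p}$ for each $k$ separately, i.e.\ $\ell^\infty$ control. A Schur test on $\ell^p$ needs $\sum_k|a_k|^p\|z^k\|_{\varphi,p}^p\lesssim\|f\|_{\varphi,p}^p$. Except for $p=2$ (Parseval) this is not automatic, and proving it requires localizing $r^{kp}e^{-p\varphi(r)}$ to annuli of bounded overlap — precisely the argument you dismissed. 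That dismissal is mistaken. With $\omega(t)=\log\int_0^{2\pi}|f(e^{t+i\theta})|^p\frac{d\theta}{\pi}$, the proof of Lemma \ref{lem3.1} (convexity of $\omega$, and $\psi(t)=\psi(y_n)+\psi'(y_n)(t-y_n)+O(1)$ on the window) together with \eqref{Lsigma} gives
\[
\frac{|f(\sigma_n)|^p}{\|L_{\sigma_n}\|^p}\lesssim\int_{|t-y_n|\le c(\psi''(y_n))^{-1/2}}e^{\omega(t)-\psi(t)+2t}\,dt .
\]
These windows have bounded overlap for small $c$, because:
\begin{itemize}
\item $y_{n+1}-y_n\ge p/\psi''(y_n)$;
\item $\psi''$ is bounded;
\item the condition $|\psi'''|=O((\psi'')^{5/3})$ keeps $\psi''$ comparable to its central value on each window.
\end{itemize}
Summing over $n$ then gives the Carleson bound directly, so the sparseness of $\Sigma_{\varphi,p}$ helps rather than hurts. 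Once this bound is known, $f-\sum_n f(\sigma_n)g_n\in\cF^p_\varphi$ vanishes on $\Sigma_{\varphi,p}$, and no Cauchy-integral argument is needed. The paper itself equates complete interpolation with interpolation plus uniqueness and never writes this step. You are right that it is needed for $p<\infty$, but your route does not supply it.

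The $p=\infty$ paragraph is wrong as stated. With $\psi=2\varphi(e^t)$, one has $|G_\infty(z)|^2e^{-2\varphi(z)}\asymp e^{\ell(t)-\psi(t)}\dist(z,\Sigma)^2$ for large $|z|$. By Lemma \ref{lem22}, $\psi(y_m)-\ell(y_m)\sim y_m$, so $|G_\infty|e^{-\varphi}$ has size roughly $|z|^{-1/2}\dist(z,\Sigma)$, not $\dist(z,\Sigma)/(1+|z|)$. Your claimed estimate is actually incompatible with the theorem: $\dist(z,\Sigma)/(1+|z|)$ is bounded, so it would make $G_\infty$ a nonzero element of $\cF^\infty_\varphi$ vanishing on $\Sigma_{\varphi,\infty}$. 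Likewise, ``$|G_\infty|e^{-\varphi}$ does not tend to $0$'' is not the right criterion in $\cF^\infty_\varphi$. What excludes a nonzero constant $h$ is that $|G_\infty|e^{-\varphi}$ is unbounded, e.g.\ along $z=-r$, where $\dist(z,\Sigma)\gtrsim r$. This matches the paper's bound $|F|e^{-\varphi}\gtrsim\dist(z,\Sigma)|h|/(1+|z|)^{1/2+\varepsilon}$. With the correct size of $G_\infty$, the $p=\infty$ interpolation estimate then follows from the same geometric sums $A(n_t)$, $B(n_t)$ as in the paper.
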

\begin{proof}
     The sequence $\Sigma_p=\Sigma_{\varphi,p}$ is a complete interpolating set for $\cF^p_\varphi$ if and only if $\Gamma$ is an interpolating and uniqueness set for $\cF^p_\varphi$. 
Let 
\begin{equation*}
    G_{\Sigma_p}(z) = \prod_{\sigma\in\Sigma_p} \left(1-\frac{z}{\sigma}\right),\quad z\in\C.
\end{equation*}
   Note that the sequence $\Sigma_p$ corresponds to $\Gamma$ with $\Delta_N=0$, and is therefore a uniqueness set for $\cF^p_\varphi${, $0<p<\infty$.}  is injective. 
   On the other hand, for every $v=(v_n)\in\ell^p$, we consider the function
    \begin{equation}\label{fv}
        f_v(z) := \sum_n v_n \|L_{\sigma_n}\| \frac{G_{\Sigma_p}(z)}{G'_{\Sigma_p}(\sigma_n)(z-\sigma_n)},\quad z\in\C.
    \end{equation}
    The above series converges uniformly on every compact set of the complex plane to an entire function $f_v$, which satisfies the interpolation problem $v_n={f_v(\sigma_n)}/{\|L_{\sigma_n}\|}$, for every integer $n$. We need to prove that $f_v$ belongs to $\cF^p_\varphi$. {Indeed, let 
     $\ell$ given by \eqref{fonctionl}. For every $z\in\C$, we write $|z|=e^t$. If $y_m\le t< y_{m+1}$, and suppose that $\dist(z,\Sigma_p)=|z-\sigma_m|$. By \eqref{estim1},  we have 
\begin{align*}
    \log|G_{\Sigma_p}(z)|^p 
    & = (pm+2)t-p\sum_{n=0}^m y_m + p\log\dist(z,\Sigma_p)-2t + O(1)\\
    & = \ell(t) + p\log\dist(z,\Sigma_p)-2t + O(1).
\end{align*}
It follows that 
\begin{equation}\label{ggg}
    |G_{\Sigma_p}(z)|^p \asymp e^{\ell(t)-2t} \dist(z,\Sigma_p)^p.
\end{equation}
Similarly, 
\begin{equation}\label{ggg1}
    |G'_{\Sigma_p}(\sigma_n)|^p \asymp e^{\ell(y_n)-2y_n},\quad n\geq 0.
\end{equation}
Using {the estimates of $\|L_{\sigma_n}\|$ in Lemma \ref{zn}, and \eqref{ggg} as well as \eqref{ggg1}}, we then obtain
\begin{align}\label{estgsigmA}
\left|\|L_{\sigma_n}\|\frac{G_{\Sigma_p}(z)}{G'_{\Sigma_p}(\sigma_n)(z-\sigma_n)}  \right|^p 
   & \asymp \left(\psi''(y_n)\right)^{\frac{1}{2}}e^{\psi(y_n)-\ell(y_n)}e^{\ell(t)-2t}\frac{\dist(z,\Sigma_p)^p}{|z-\sigma_n|^p}.
\end{align}
}
Consequently,   the function $f_v$, defined by \eqref{fv}, satisfies 
\begin{equation}\label{ser1}
    |f_v(z)|e^{-\varphi(z)} \lesssim \sum_n |v_n|\left(\psi''(y_n)\right)^{\frac{1}{2p}}e^{\frac{1}{p}\psi(y_n)-\frac{1}{p}\ell(y_n)}e^{\frac{1}{p}\ell(t)-\frac{1}{p}\psi(t)-\frac{2}{p}t}\frac{\dist(z,\Sigma_p)}{|z-\sigma_n|},\quad z\in\C.
\end{equation}

We  now distinguish the three cases: $0 < p \le 1$, $1 < p < \infty$, and $p = \infty$: \\

%%%%%%%%%%%%%%%%%%%%

\noindent {\bf $ \bullet$ The case  where $1<p<\infty$.}
First note that, by (4) and (5) of Lemma \ref{lemBBB}, we have
{\small
$$
\sum_{n=0}^m \left(\psi''(y_n)\right)^{\frac{1}{2p}}e^{\frac{1}{p}\psi(y_n)-\frac{1}{p}\ell(y_n)}  +|z|\sum_{n=m}^\infty\left(\psi''(y_n)\right)^{\frac{1}{2p}}e^{\frac{1}{p}\left(\psi(y_n)-\ell(y_n)-py_n\right)}       \asymp \left(\psi''(y_m)\right)^{\frac{1}{2p}}e^{\frac{1}{p}\psi(y_m)-\frac{1}{p}\ell(y_m)}.
$$}
Since $\psi'$ is a non-decreasing function and $\ell$ is  affine,  the function  $\psi-\ell$ is  convex. Hence  
\[
\psi(t)-\ell(t)\geq \psi(y_m)-\ell(m) + \left[\psi'(y_m)-\ell'(y_m)\right](t-y_m) = \psi(y_m)-\ell(m),\quad t\geq 0,
\]
which gives 
\begin{equation}
I(z) = \sum_n \left(\psi''(y_n)\right)^{\frac{1}{2p}}e^{\frac{1}{p}\psi(y_n)-\frac{1}{p}\ell(y_n)}\frac{\dist(z,\Sigma_p)}{|z-\sigma_n|} 
 \lesssim \left(\psi''(t)\right)^{\frac{1}{2p}}e^{\frac{1}{p}\psi(t)-\frac{1}{p}\ell(t)}.
\end{equation}
{ Using} Jensen inequality, we obtain
\begin{align}
    |f_v(z)|^pe^{-p\varphi(z)} & \lesssim \left[\sum_n |v_n|\left(\psi''(y_n)\right)^{\frac{1}{2p}}e^{\frac{1}{p}\psi(y_n)-\frac{1}{p}\ell(y_n)}e^{\frac{1}{p}\ell(t)-\frac{1}{p}\psi(t)-\frac{2}{p}t}\frac{\dist(z,\Sigma_p)}{|z-\sigma_n|}\right]^p\nonumber\\
    & \lesssim \sum_n |v_n|^p\left(\psi''(y_n)\right)^{\frac{1}{2p}}e^{\frac{1}{p}\psi(y_n)-\frac{1}{p}\ell(y_n)}e^{\ell(t)-\psi(t)-2t}\frac{\dist(z,\Sigma_p)}{|z-\sigma_n|}I(z)^{p-1}.\label{ser2}
\end{align}
{Let $q$ be} the H\"older conjugate of $p$.  For every small positive $\delta$, following $(2)$ of Lemma \ref{lemBBB}, we have
\begin{align*}
    J_{n}^{(2)} & := \int_{|z|\ge e^{-\delta}\sigma_n}e^{\frac{1}{p}\ell(\log^+|z|)-\frac{1}{p}\psi(\log^+|z|)-2\log^+|z|}\frac{\dist(z,\Sigma_p)}{|z-\sigma_n|}\left(\psi''(\log^+|z|)\right)^{\frac{1}{2q}} dm(z) \\
    & \lesssim \left(\psi''(y_n)\right)^{\frac{1}{2q}}\int_{|z|\ge e^{-\delta}\sigma_n}e^{\frac{1}{p}\ell(\log^+|z|)-\frac{1}{p}\psi(\log^+|z|)-2\log^+|z|}dm(z) \\
    & \lesssim \left(\psi''(y_n)\right)^{\frac{1}{2q}}\int_{y_n-\delta}^\infty e^{\frac{1}{p}\ell(t)-\frac{1}{p}\psi(t)} dt\\
    & \lesssim \left(\psi''(y_n)\right)^{\frac{1}{2q}}\left(\psi''(y_n)\right)^{\frac{-1}{2}}e^{\frac{1}{p}\ell(y_n)-\frac{1}{p}\psi(y_n)}\\
    & \lesssim \left(\psi''(y_n)\right)^{\frac{-1}{2p}}e^{\frac{1}{p}\ell(y_n)-\frac{1}{p}\psi(y_n)}.
\end{align*}
Similarly, by $(3)$ of Lemma \ref{lemBBB} we have
\begin{align*}
    J_{n}^{(1)} & := \int_{|z|\le e^{-\delta}\sigma_n}e^{\frac{1}{p}\ell(\log^+|z|)-\frac{1}{p}\psi(\log^+|z|)-2\log^+|z|}\frac{\dist(z,\Sigma_p)}{|z-\sigma_n|}\left(\psi''(\log^+|z|)\right)^{\frac{1}{2q}}dm(z) \\
    & \lesssim \frac{1}{\sigma_n}\int_{|z|\le e^{-\delta}\sigma_n} |z| e^{\frac{1}{p}\ell(\log^+|z|)-\frac{1}{p}\psi(\log^+|z|)-2\log^+|z|} dm(z)\\
    & \lesssim e^{-y_n}\int_0^{y_n-\delta}e^{\frac{1}{p}\ell(t)-\frac{1}{p}\psi(t)+t}\left(\psi''(t)\right)^{\frac{1}{2q}}dt\\
    & \lesssim \left(\psi''(y_n)\right)^{\frac{-1}{2p}}e^{\frac{1}{p}\ell(y_n)-\frac{1}{p}\psi(y_n)}.
\end{align*}
Hence, 
\begin{align}
    J_n & = \int_\C e^{\ell(\log^+|z|)-\psi(\log^+|z|)-2\log^+|z|}\frac{\dist(z,\Sigma_p)}{|z-\sigma_n|}I(z)^{p-1}dm(z) \nonumber\\
    &  = J_{n}^{(1)} + J_{n}^{(2)}   \lesssim \left(\psi''(y_n)\right)^{\frac{-1}{2p}}e^{\frac{1}{p}\ell(y_n)-\frac{1}{p}\psi(y_n)}.\label{2}
\end{align}
Integrating both sides of \eqref{ser2} with respect to the Lebesgue measure and  using the estimate \eqref{2}, we obtain 
\begin{align*}
    \|f_v\|_{\varphi,p}^p & \lesssim \sum_n |v_n|^p\left(\psi''(y_n)\right)^{\frac{1}{2p}}e^{\frac{1}{p}\psi(y_n)-\frac{1}{p}\ell(y_n)}J_n \lesssim \sum_n |v_n|^p=\|v\|_p^p.
\end{align*}

\noindent {\bf $ \bullet$ The case where  $0<p\le 1$}. By \eqref{ser1},  we have 
\begin{align*}
    \int_\C |f_v(z)|^pe^{-p\varphi(z)}dm(z) & \lesssim \sum_n |v_n|^p\left(\psi''(y_n)\right)^{\frac{1}{2}}e^{\psi(y_n)-\ell(y_n)}\int_\C e^{\ell(t)-\psi(t)-2t}\frac{\dist(z,\Sigma_p)^p}{|z-\sigma_n|^p}dm(z). 
\end{align*}
For  sufficiently small $\delta$, we have  
\begin{align*}
I_n  & = \int_\C e^{\ell(t)-\psi(t)-2t}\frac{\dist(z,\Sigma_p)^p}{|z-\sigma_n|^p}dm(z)\\
& = \int_{|z|\le e^{-\delta}\sigma_n} + \int_{|z|\ge e^{-\delta}\sigma_n}e^{\ell(t)-\psi(t)-2t}\frac{\dist(e^t,\Sigma_p)^p}{|e^t-\sigma_n|^p}e^tde^t\\
  & \lesssim e^{-py_n}\int_0^{y_n-\delta}e^{\ell(t)-\psi(t) +pt} dt + \int_{y_n-\delta}^\infty e^{\ell(t)-\psi(t)}dt \\
  & \lesssim e^{-py_n}\left(\psi''(y_n)\right)^{-1/2}e^{\ell(y_n)-\psi(y_n) +py_n}+\left(\psi''(y_n)\right)^{-1/2}e^{\ell(y_n)-\psi(y_n)}\\
  & \lesssim \left(\psi''(y_n)\right)^{-1/2}e^{\ell(y_n)-\psi(y_n)}.
\end{align*}
Therefore,
\[
 \int_\C |f_v(z)|^pe^{-p\varphi(z)}dm(z) \lesssim\ \sum_n |v_n|^p = \|v\|^p_p.\\
\]

\noindent {\bf $ \bullet$ The case  where $p=\infty$.} First, $\Sigma^*=\Sigma_2\cup \{\sigma^*\}$ is a uniqueness set for $\cF^\infty_\varphi$. Indeed, if $F\in \cF^\infty_\varphi$  vanishes on $\Sigma^*$,  then, we can write $F(z)=(z-\sigma^*)G_{\Sigma_2}(z)h(z)$ for some entire function $h$. The estimates of $G_{\Sigma_2}$ in Lemma \ref{lem3.3} (here $\Delta_N=0$) ensure that 
\[
1\gtrsim |F(z)|e^{-\varphi(z)} \gtrsim\frac{\dist(z,\Gamma)}{(1+|z|)^{\frac{1}{2}+\varepsilon}}|h(z)|,\quad z\in\C.
\]
Hence,  $h$ must be identically zero, and hence $F=0$.

For simplicity of notation, we denote $\sigma^*=\sigma_{-1}$ and write $\Sigma^*:=(\sigma_n)_{n\ge -1}$. For every $v=(v_n)\in\ell^\infty$, consider {the function} 
\begin{equation}
    f_v(z) = \sum_{n\ge -1} v_ne^{\varphi(\sigma_n)}\frac{G(z)}{G'(\sigma_n)(z-\sigma_n)},\quad z\in\C,
\end{equation}
where $G(z)=G_{\Sigma^*}(z)=(1-\frac{z}{\sigma_{-1}})G_{\Sigma_2}(z)$. Using identities \eqref{ggg} and \eqref{ggg1}, we obtain
\begin{align*}
\left|e^{\varphi(\sigma_n)}\frac{G(z)}{G'(\sigma_n)(z-\sigma_n)}  \right|^2 & \asymp e^{2\varphi(\sigma_n)-\ell(y_n)+\ell(t)} \frac{\dist(z,\Sigma^*)^2}{|z-\sigma_n|^2}.
\end{align*}
Using the notation  $\psi(t)=2\varphi(e^t)$, let { $n_t$ be} the unique integer that satisfies $y_{n_t} \le t\le y_{n_t+1}$, we get for $|z|=e^t$
\begin{align*}
    |f_v(z)|e^{-\varphi(z)} & \lesssim \sum_n |v_n|e^{\frac{1}{2}(\psi(y_n)-\ell(y_n)+\ell(t)-\psi(t))} \frac{\dist(z,\Sigma^*)}{|z-\sigma_n|}
 \lesssim \|v\|_\infty \Big({A(n_t)}  + {B(n_t)} \Big)
\end{align*}
where 
\begin{eqnarray*}
 A(n_t) &=& \sum_{n\le n_t}e^{\frac{1}{2}(\psi(y_n)-\ell(y_n)+\ell(t)-\psi(t))}\\
&=& \sum_{n\le n_t} \exp\Big[\frac{1}{2}\big(\psi(y_n)-\psi'(y_n)y_n + 2\sum_{k=0}^n y_k 
   + \psi'(y_{n_t})y_{n_t} - 2\sum_{k=0}^{n_t} y_k - \psi(t)\big)\Big] \\
  &\asymp& \sum_{n\le n_t} \exp\Big[\frac{1}{2}(-(y_{n_t}-y_n)(1+o(1)) + \psi(y_{n_t})-\psi(t))\Big] \\
  &\lesssim& \sum_{n\le n_t} e^{-c|y_{n_t}-y_n|} \lesssim 1.
\end{eqnarray*}
for some positive constant $c>0$, and 
\[
 B(n_t) =e^t\sum_{n\ge n_t+1}e^{\frac{1}{2}(\psi(y_n)-\ell(y_n)+\ell(t)-\psi(t)-2y_n)} \lesssim \sum_{n\ge n_t+1} e^{-c|y_n-y_{n_t}|} \lesssim 1.
\]
Consequently, 
\[
\sup_{z\in\C} |f_v(z)|e^{-\varphi(z)} \lesssim \|v\|_\infty.
\]
This proves that $\Sigma^*$ is a complete interpolating sequence for $\cF^\infty_\varphi$ and {concludes} the proof of Theorem \ref{theoRef}.
\end{proof}

\begin{cor}
    The following maps $\|.\|_{\Sigma_{\varphi,p}}$ given by
    \begin{equation}
        \|f\|_{\Sigma_{\varphi,p}}^p := \sum_{n\geq 0}\ \left|\frac{f(\sigma_n)}{\|L_{\sigma_n}\|}\right|^p 
    \end{equation}
    defines a norm if $p\ge 1$ and a semi-norm if $0<p< 1$ in $\cF^p_\varphi$ equivalent to $\|.\|_{\varphi,p}$. 
\end{cor}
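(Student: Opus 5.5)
The corollary says that $\|f\|_{\Sigma_{\varphi,p}}\asymp\|f\|_{\varphi,p}$ on $\cF^p_\varphi$, with a norm when $p\ge1$ and a semi-norm when $0<p<1$. I plan to derive this directly from Theorem~\ref{theoRef}, which gives that $\Sigma_p=\Sigma_{\varphi,p}$ is both a uniqueness set and an interpolating set for $\cF^p_\varphi$. I will work with the restriction operator
\[
T: f\in\cF^p_\varphi\longmapsto \bigl(f(\sigma_n)/\|L_{\sigma_n}\|\bigr)_{n\ge0}.
\]
The norm equivalence is the statement that $T$ is bounded and bounded below. Once that is known, the norm/semi-norm statement is immediate. $\|\cdot\|_{\Sigma_{\varphi,p}}$ is the $\ell^p$ (quasi-)norm composed with the linear map $T$, so it is homogeneous and satisfies Minkowski's inequality when $p\ge1$. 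It is definite because $\Sigma_p$ is a uniqueness set (Lemma~\ref{uniq} with $\Delta_N=0$). For $0<p<1$ the $\ell^p$ functional only satisfies a quasi-triangle inequality, which is what the paper calls a semi-norm here.

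\emph{Lower bound $\|f\|_{\varphi,p}\lesssim\|Tf\|_p$.} Given $f\in\cF^p_\varphi$ with $Tf\in\ell^p$, set $v=Tf$ and form $f_v$ as in \eqref{fv}. The proof of Theorem~\ref{theoRef} shows $\|f_v\|_{\varphi,p}\lesssim\|v\|_p$, in both the case $1<p<\infty$ and the case $0<p\le1$. Since $f_v$ and $f$ agree on $\Sigma_p$, their difference is a function in $\cF^p_\varphi$ vanishing on $\Sigma_p$, so uniqueness gives $f=f_v$. Hence $\|f\|_{\varphi,p}\lesssim\|Tf\|_p$.

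\emph{Upper bound $\|Tf\|_p\lesssim\|f\|_{\varphi,p}$, i.e. a Plancherel--Pólya (Carleson) inequality.} This is the step I expect to be the main obstacle. I will obtain it by localizing the proof of Lemma~\ref{lem3.1}. The points $\sigma_n$ are logarithmically separated, since $y_{n+1}-y_n\asymp p/\psi''(y_n)$ and $\psi''$ is non-increasing, so this gap stays bounded below. Therefore the discs $D(\sigma_n,\kappa\sigma_n)$ with small $\kappa$ have bounded overlap.

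Unwinding the proof of Lemma~\ref{lem3.1} at $z=\sigma_n$ gives
\[
|f(\sigma_n)|^p\lesssim \sigma_n^{\psi'(y_n)-2}\int_{\cC(e^{y_n-\kappa},e^{y_n+\kappa})}|f(\xi)|^p|\xi|^{-\psi'(y_n)}\,dm(\xi).
\]
On that annulus, convexity of $\psi$ gives $\psi(\log|\xi|)\ge \psi(y_n)+\psi'(y_n)(\log|\xi|-y_n)$, so the weight $|\xi|^{-\psi'(y_n)}$ is at most $e^{\psi(y_n)-y_n\psi'(y_n)}e^{-p\varphi(\xi)}$. Dividing by $\|L_{\sigma_n}\|^p$, which is given by \eqref{Lsigma}, yields
\[
|f(\sigma_n)|^p/\|L_{\sigma_n}\|^p\lesssim(\psi''(y_n))^{-1/2}\int_{A_n}|f|^pe^{-p\varphi}\,dm,
\]
with $A_n$ the annulus above. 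The factor $(\psi''(y_n))^{-1/2}$ appears to be too large to sum directly, because a fixed-width annulus $A_n$ contains about $\psi''(y_n)$ of the points $\sigma_k$. To fix this, I will shrink the annulus to width $\varepsilon_n=c(\psi''(y_n))^{-1/2}$ and compare $|\xi|^{-\psi'(y_n)}$ with $e^{-p\varphi(\xi)}$ there, which is fine up to $O(1)$ by Taylor's formula. The subharmonic mean-value step then has to be done over the thin annulus $A_n'$ of width $\varepsilon_n$, using the convexity of $\omega(t)=\log\int|f(e^{t+i\theta})|^p\,d\theta$ exactly as in the proof of Lemma~\ref{lem3.1}. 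This should give
\[
|f(\sigma_n)|^p/\|L_{\sigma_n}\|^p\lesssim\int_{A_n'}|f|^pe^{-p\varphi}\,dm.
\]
Each annulus $A_n'$ contains $O(1)$ points $y_k$, because $\varepsilon_n$ is comparable to $(\psi'')^{-1/2}\lesssim$ a bounded multiple of the gaps $y_{k+1}-y_k\asymp(\psi'')^{-1}$. So the $A_n'$ have bounded overlap, and summing over $n$ gives $\|Tf\|_p^p\lesssim\|f\|^p_{\varphi,p}$. If the thin-annulus mean-value step fails, my fallback is a duality argument via the coefficient functional $f\mapsto f(\sigma_n)$ against the dual basis $g_n=G_{\Sigma_p}/(G'_{\Sigma_p}(\sigma_n)(z-\sigma_n))$.
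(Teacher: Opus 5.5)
Your proposal is correct, but it reaches the corollary by a different route from the paper. The paper reads the corollary directly off Theorem~\ref{theoRef}. A complete interpolating sequence is in particular sampling, and the sampling inequality for $\Sigma_{\varphi,p}$ is literally $\|f\|_{\Sigma_{\varphi,p}}\asymp\|f\|_{\varphi,p}$; the (quasi-)norm properties are then automatic. You instead rebuild the sampling inequality from what the proof of Theorem~\ref{theoRef} actually verifies, namely uniqueness and $\|f_v\|_{\varphi,p}\lesssim\|v\|_p$. You then add an independent Plancherel--P\'olya bound $\|Tf\|_p\lesssim\|f\|_{\varphi,p}$ obtained by localizing Lemma~\ref{lem3.1}. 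That extra step is not redundant. The proof of Theorem~\ref{theoRef} passes from ``interpolating and uniqueness'' to ``complete interpolating'' without checking that $T$ maps $\cF^p_\varphi$ into $\ell^p$. Lemma~\ref{ufsp}, which would supply this, uses the complete interpolation of $\Sigma_{\varphi,p}$ in its converse direction. So your version is self-contained, while the paper's is a one-line consequence of Theorem~\ref{theoRef} taken at face value.

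Two details in your Plancherel--P\'olya step need correcting.

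First, convexity gives $\psi(s)\ge\psi(y_n)+\psi'(y_n)(s-y_n)$, hence $|\xi|^{-\psi'(y_n)}\ge e^{\psi(y_n)-y_n\psi'(y_n)}e^{-p\varphi(\xi)}$, which is the wrong direction for you. The bound you need comes from the upper Taylor estimate $\psi(s)\le\psi(y_n)+\psi'(y_n)(s-y_n)+\tfrac12\sup\psi''\,(s-y_n)^2$. This is $O(1)$ on a window $|s-y_n|\le c(\psi''(y_n))^{-1/2}$: to the right of $y_n$, $\psi''$ is non-increasing, and to the left it is comparable to $\psi''(y_n)$ by $|\psi'''|=O((\psi'')^{5/3})$.

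Second, $\psi''$ is positive and non-increasing, hence bounded. So $\varepsilon_n=c(\psi''(y_n))^{-1/2}$ is bounded below, and the annulus $A_n'$ is not thin: it is at least as wide as a fixed $\kappa$-annulus. It is much wider when $\psi''\to0$ (e.g.\ $\varphi=(\log^+ r)^\alpha$ with $1<\alpha<2$), which is exactly the regime where the factor $(\psi''(y_n))^{-1/2}$ blows up. This is what makes the argument of Lemma~\ref{lem3.1} applicable. You apply the mean-value inequality on $D(\sigma_n,\kappa\sigma_n/4)$, then use that $v(\varepsilon)$ is increasing to pass from width $\kappa$ to width $\varepsilon_n\ge\kappa$. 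That monotonicity only works in this direction, so a genuinely thinner annulus would not do.

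With these two corrections, the rest goes through as you describe: the local bound $|f(\sigma_n)|^p/\|L_{\sigma_n}\|^p\lesssim\int_{A_n'}|f|^pe^{-p\varphi}\,dm$, the bounded overlap of the $A_n'$ (which again uses $\psi''\lesssim 1$), and the summation.
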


{\section{Asymptotic evaluation estimates}\label{sect4}
In this section, we obtain asymptotic evaluation estimates for $\cF^p_\varphi$,  the linear maps 
$L_z\ :\ f\in\cF^p_\varphi\longmapsto f(z)\in\C$.  In particular, for $p=2$, we recover the known diagonal asymptotics of the reproducing kernel for weights of the form $\varphi(z) = (\log^+ |z|)^{\alpha}$, $1<\alpha\leq 2$, obtained in \cite{BDHK,KO20}.

\begin{theo}\label{kernelestimate}
    Let $\varphi$ be a weight satisfying {conditions \eqref{conditions}} and let $0<p<\infty$. 
        Let $\rho(z)=(\Delta \varphi(z))^{-1/2}$. 
    \begin{equation*}
            \|L_z\|^p \asymp \frac{e^{p\varphi(z)}}{|z| \rho(z)} \left[\exp\Big({-p  \int_{{\sigma_{n_z+1}}}^{|z|}  {r}{\Delta(\varphi(r))}  \, \log\frac{|z|}{r} \, dr}\Big)+ \exp\Big({-p  \int_{{\sigma_{n_z}}}^{|z|}{r}{\Delta(\varphi(r))} \, \log\frac{|z|}{r} \, dr}\Big)\right]
\end{equation*}          
    where $n_z$ is the unique integer such that $\sigma_{n_z}\le |z| <\sigma_{n_z+1}.$
\end{theo}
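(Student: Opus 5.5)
The plan is to first rewrite the right-hand side in terms of $\psi$, and then prove matching lower and upper bounds. The lower bound comes from monomials and the upper bound from the Lagrange-type expansion along $\Sigma_{\varphi,p}$ given by Theorem~\ref{theoRef}.

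\emph{Translation.} Write $|z|=e^t$ and $n=n_z$, so that $y_n\le t<y_{n+1}$. For a radial function, $\Delta\varphi(r)=\varphi''(r)+\varphi'(r)/r=r^{-2}\frac{d^2}{ds^2}\varphi(e^s)$ with $r=e^s$, so $r\Delta\varphi(r)\,dr=\psi''(s)\,ds/p$ and $\log(|z|/r)=t-s$. Hence
\[
p\int_{\sigma_k}^{|z|} r\Delta\varphi(r)\log\frac{|z|}{r}\,dr=\int_{y_k}^{t}\psi''(s)(t-s)\,ds=\psi(t)-\psi(y_k)-\psi'(y_k)(t-y_k),
\]
which is the Taylor remainder. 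This identity also holds for $k=n+1$, where $t<y_{k}$. Moreover $|z|\rho(z)\asymp |z|^2\psi''(t)^{-1/2}$. Using $\psi'(y_k)=pk+2$, the claim therefore becomes
\[
\|L_z\|^p\asymp \psi''(t)^{1/2}\sum_{k\in\{n,n+1\}} e^{pkt}\,e^{\psi(y_k)-(pk+2)y_k}.
\]
By Lemma~\ref{zn}, the $k$-th term equals, up to constants, $(\psi''(t)/\psi''(y_k))^{1/2}\,|z|^{pk}/\|z^k\|^p_{\varphi,p}$.

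\emph{Comparability of $\psi''$.} I next need $\psi''(t)\asymp\psi''(y_n)\asymp\psi''(y_{n+1})$ for $t\in[y_n,y_{n+1}]$. Since $\psi''$ is non-increasing, $y_{n+1}-y_n\ge p/\psi''(y_n)$, and the equation $\int_{y_n}^{y_{n+1}}\psi''=p$ together with the control $|\psi'''|=O((\psi'')^{5/3})$ should give this comparability, as in \cite{BBB17}. This is the step I expect to be the main technical obstacle. A naive bound on $\psi'''$ over an interval of length $p/\psi''$ does not suffice when $\psi''\to0$. I would first try to exploit the monotonicity of $\psi''$ directly, bounding $\psi''(y_{n+1})$ below in terms of $\psi''(y_n)$ via the defining equation; the same comparability is already implicit in Lemma~\ref{lemBBB}(4),(5) and in \eqref{Lsigma}.

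\emph{Lower bound.} Testing $L_z$ on $z^n/\|z^n\|_{\varphi,p}$ and on $z^{n+1}/\|z^{n+1}\|_{\varphi,p}$ gives $\|L_z\|^p\ge |z|^{pk}/\|z^k\|^p_{\varphi,p}$ for $k=n,n+1$. Combined with the comparability of $\psi''$, this yields the lower bound for each term, hence for their sum.

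\emph{Upper bound.} By Theorem~\ref{theoRef} and its corollary, every $f\in\cF^p_\varphi$ has the expansion $f=f_v$ from \eqref{fv} with $v_k=f(\sigma_k)/\|L_{\sigma_k}\|$ and $\|v\|_{\ell^p}\asymp\|f\|_{\varphi,p}$, so $\sup_k|v_k|\lesssim\|f\|_{\varphi,p}$. Then \eqref{estgsigmA} gives
\[
|f(z)|^{\min(p,1)}\lesssim\|f\|^{\min(p,1)}\sum_k\Big[\psi''(y_k)^{1/2}e^{\psi(y_k)-\ell(y_k)+\ell(t)-2t}\frac{\dist(z,\Sigma_{\varphi,p})^p}{|z-\sigma_k|^p}\Big]^{\min(1,1/p)}.
\]
For $p\le1$ I use the $p$-triangle inequality; for $p>1$ I use the bound on $\sup_k|v_k|$. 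I split the sum into $k\le n$ and $k\ge n+1$. For $k\le n$ the ratio $\dist/|z-\sigma_k|$ is at most $1$, and Lemma~\ref{lemBBB}(4) shows that this part is dominated by the $k=n$ term. Since $\ell(t)-\ell(y_n)=(pn+2)(t-y_n)$, that term is exactly the $k=n$ term of the target. For $k\ge n+1$ I use $\dist/|z-\sigma_k|\lesssim |z|/\sigma_k$ together with Lemma~\ref{lemBBB}(5), so this part is dominated by the $k=n+1$ term. There, $\ell(t)-\ell(y_{n+1})+p(t-y_{n+1})=(p(n+1)+2)(t-y_{n+1})$ matches the second target term. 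The factor $\psi''(y_k)^{1/2}$ becomes $\psi''(t)^{1/2}$ by the comparability step, which completes the upper bound.
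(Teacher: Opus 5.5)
Your outline follows the paper's own proof. You expand $f$ along $\Sigma_{\varphi,p}$ via Theorem~\ref{theoRef}, split the sum at $n_z$ with Lemma~\ref{lemBBB}(4),(5), test on $z^{n_z}$ and $z^{n_z+1}$, and rewrite the Green integral as a Taylor remainder. Your cruder bound $\sup_k|v_k|\sum_k|g_k(z)|$ for $p>1$, in place of $\ell^p$--$\ell^q$ duality, is harmless. What this machinery actually gives, with $n=n_z$, is
\[
\|L_z\|^p\asymp e^{-2t}\sum_{k\in\{n,n+1\}}\psi''(y_k)^{1/2}\,e^{\psi(y_k)+\psi'(y_k)(t-y_k)}.
\]
The step you leave open, replacing $\psi''(y_n)$ and $\psi''(y_{n+1})$ by $\psi''(t)$, is a genuine gap, and none of your proposed routes closes it:
(i) monotonicity only gives $\psi''(y_{n+1})\le\psi''(t)\le\psi''(y_n)$, which covers the lower bound for $k=n$ and the upper bound for $k=n+1$ but goes the wrong way for the other two;
(ii) the relation $\int_{y_n}^{y_{n+1}}\psi''=p$ only bounds $\psi''(y_{n+1})$ from above;
(iii) \eqref{Lsigma} is a one-point estimate;
(iv) Lemma~\ref{lemBBB}(4),(5) are geometric-growth statements whose exponential factors swamp any ratio of $\psi''$-values.

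In fact comparability does not follow from \eqref{conditions}. Put $h=(\psi'')^{-2/3}$; then \eqref{conditions} says that $h$ is non-decreasing and $K$-Lipschitz. This keeps $\psi''$ essentially constant on the Laplace scale $(\psi'')^{-1/2}=h^{3/4}$, which is all Lemma~\ref{zn} needs, but not on the gap scale $y_{n+1}-y_n\ge p\,h(y_n)^{3/2}$.

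A counterexample runs as follows.
\begin{itemize}
\item Keep $\psi''$ constant on $[y_n,\,y_n+p/(4\psi''(y_n))]$, then let $h$ rise with slope $K$ from $h_0$ to $M^{2/3}h_0$, then keep $\psi''$ constant again. The drop by the factor $M$ costs $\int\psi''<\frac{2}{K}h_0^{-1/2}$, so for small $\psi''$ it fits inside $(y_n,y_{n+1})$.
\item Just after the drop, the $(n+1)$-term is exponentially smaller than the $n$-term. So the stated right-hand side is $\asymp (\psi''(y_n)/M)^{1/2}e^{-2t}e^{\psi(y_n)+\psi'(y_n)(t-y_n)}$.
\item Testing on $z^n/\|z^n\|_{\varphi,p}$ and using Lemma~\ref{zn} gives $\|L_z\|^p$ at least $M^{1/2}$ times that.
\item Repeating with $M\to\infty$, with long constant stretches so that $\psi'\to\infty$, contradicts the statement.
\end{itemize}

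The paper's proof has the same hole: at the end of both cases it silently writes $(\psi''(t))^{1/2}$ for $(\psi''(y_{n_t}))^{1/2}$ and $(\psi''(y_{n_t+1}))^{1/2}$. You have two ways out. Either keep the separate prefactors, which your argument does prove. Or add a hypothesis such as $|\psi'''|=O((\psi'')^2)$: then $1/\psi''$ is $C$-Lipschitz, and $p\ge C^{-1}\log\bigl(1+C(y_{n+1}-y_n)\psi''(y_n)\bigr)$ gives $\psi''(y_{n+1})\ge e^{-Cp}\psi''(y_n)$. This extra hypothesis holds for $(\log^+r)^\alpha$ with $1<\alpha\le2$.
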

\begin{proof}
According to Theorem \ref{theoRef}, the sequence $\Sigma_p=\Sigma_{\varphi,p}$ is a complete interpolating set for $\cF^p_\varphi$, $0<p<\infty$. Hence every function $f$ of $\cF^p_\varphi$ can be written as 
\[
f(z) = \sum_n a_n \frac{\|L_{\sigma_n}\|}{G_{\Sigma_p}'(\sigma_n)}\frac{G_{\Sigma_p}(z)}{z - \sigma_n},\quad z\in\C
\]
for a unique sequence $(a_n)\in\ell^p$, and moreover  
$$\|f\|_{\varphi,p}\asymp\|(a_n)\|_p.$$ 
Consequently, 
\begin{align}
    \| L_z \| & =   \sup_{\|f\|_{\varphi,p}\le 1} |f(z)| =  \sup_{\|f\|_{\varphi,p}\le 1} \left|\sum_n a_n \frac{\|L_{\sigma_n}\|}{G_{\Sigma_p}'(\sigma_n)}\frac{G_{\Sigma_p}(z)}{z-\sigma_n}\right|\nonumber\\
     & \asymp \sup_{\|(a_n)\|_p\le 1} \left|\sum_n a_n \frac{\|L_{\sigma_n}\|}{G_{\Sigma_p}'(\sigma_n)}\frac{G_{\Sigma_p}(z)}{z - \sigma_n}\right|.
    \label{Lz} 
\end{align}

\noindent {\bf$\bullet$  Case $1<p<\infty$.} Using {the duality between $\ell^p$ and $\ell^q$}, where $\frac{1}{p}+\frac{1}{q}=1$, by \eqref{Lz}  we obtain
\begin{eqnarray}
\| L_z \| 
&\asymp&  \left( \sum_n \frac{\|L_{\sigma_n}\|^q}{|G_{\Sigma_p}'(\sigma_n)|^q}\frac{|G_{\Sigma_p}(z)|^q}{|z - \sigma_n|^q}\right)^{1/q} =S(z)^{1/q}. \label{lzz}
\end{eqnarray}
By Lemma \ref{zn}, \eqref{ggg} and \eqref{ggg1},  we have
\[
\|L_{\sigma_n}\|^p \asymp (\psi''(y_n))^{1/2} e^{\psi(y_n)-2y_n},\quad n\ge 0
\]
and 
\[
|G_{\Sigma_p}(z)|^p \asymp \dist(z,\Sigma_p)^p e^{\ell(t)-2t},\quad |G_{\Sigma_p}'(\sigma_n)|^p \asymp e^{\ell(y_n)-2y_n}.
\]
It follows that  
\begin{align*}
  S(z) 
    & \asymp e^{\frac{q}{p}(\ell(t)-2t)} \left(\sum_{n\le n_t} + \sum_{n\ge n_t+1} (\psi''(y_n))^{\frac{q}{2p}}e^{\frac{q}{p}(\psi(y_n)-\ell(y_n))}\frac{\dist(z,\Sigma_p)^q}{|z - \sigma_n|^q}\right),
\end{align*}
where $n_t$ is the unique integer such that $y_{n_t}\le t < y_{n_t+1}$. 
By Lemma \ref{lemBBB}
\begin{align*}
    \sum_{n\ge n_t+1}(\psi''(y_n))^{\frac{q}{2p}}e^{\frac{q}{p}(\psi(y_n)-\ell(y_n))}\frac{\dist(z,\Sigma_p)^q}{|z - \sigma_n|^q} & \asymp e^{qt} \sum_{n\ge n_t+1} (\psi''(y_n))^{\frac{q}{2p}}e^{\frac{q}{p}(\psi(y_n)-\ell(y_n)-py_n)}\\
    & \asymp e^{qt}(\psi''(y_{n_t+1}))^{\frac{q}{2p}}e^{\frac{q}{p}(\psi(y_{n_t+1})-\ell(y_{n_t+1})-py_{n_t+1})}
\end{align*}
Similarly,
\begin{align*}
    \sum_{n\le n_t} (\psi''(y_n))^{\frac{q}{2p}}e^{\frac{q}{p}(\psi(y_n)-\ell(y_n))}\frac{\dist(z,\Sigma)^q}{|z - \sigma_n|^q}  & \asymp  \sum_{n\le n_t} (\psi''(y_n))^{\frac{q}{2p}}e^{\frac{q}{p}(\psi(y_n)-\ell(y_n))} \\
    & \asymp (\psi''(y_{n_t}))^{\frac{q}{2p}}e^{\frac{q}{p}(\psi(y_{n_t})-\ell(y_{n_t}))}.
\end{align*}
{Consequently,   by \eqref{Lz} we get}
\begin{align*}
    \|L_z\|^p 
& \asymp (\psi''(t))^{\frac{1}{2}} e^{-2t}\left(e^{\psi'(y_{n_t+1})(t-y_{n_t+1})+\psi(y_{n_t+1})}+e^{\psi'(y_{n_t})(t-y_{n_t})+\psi(y_{n_t})}\right). 
\end{align*}

{$\bullet$ \bf Case $0<p\le 1$.} {We use the duality between $\ell^p$ and $\ell^\infty$ in the following form}
\begin{equation}
    \langle u,v\rangle := \sum_{n=0}^\infty u_n\overline{v_n},\quad u\in\ell^p,\ v\in\ell^\infty.
\end{equation}
{Together with} \eqref{Lz}, we get 
\begin{equation}
    \|L_z\|\asymp \sup_{\|(a_n)\|_p\le 1} \left|\sum_n a_n \frac{\|L_{\sigma_n}\|}{G_{\Sigma_p}'(\sigma_n)}\frac{G_{\Sigma_p}(z)}{z- \sigma_n}\right| =  \sup_n \frac{\|L_{\sigma_n}\|}{|G_{\Sigma_p}'(\sigma_n)|}\frac{|G_{\Sigma_p}(z)|}{|z - \sigma_n|}\label{kern}
\end{equation}
The estimates of $\|L_{\sigma_n}\|$ and the function $G_{\Sigma_p}$ ensure that  
\begin{align*}
   I_n(z) & := \frac{\|L_{\sigma_n}\|}{|G_{\Sigma_p}'(\sigma_n)|}\frac{|G_{\Sigma_p}(z)|}{|z - \sigma_n|}\  \asymp\ e^{\frac{1}{p}(\ell(t)-2t)} (\psi''(y_n))^{\frac{1}{2p}}e^{\frac{1}{p}(\psi(y_n)-\ell(y_n))}\frac{\dist(z,\Sigma_p)}{|z - \sigma_n|}.
\end{align*}
If $m\ge n_{t}+1$, using that the functions $t\longmapsto e^{\psi(t)-\ell(t)-pt}$ and $\psi''$ are non-increasing, we obtain  
\begin{align*}
  (\psi''(y_m))^{\frac{1}{2p}}e^{\frac{1}{p}(\psi(y_m)-\ell(y_m))}\frac{\dist(z,\Sigma_p)}{|z - \sigma_m|} &  \lesssim  (\psi''(y_{n_t+1}))^{\frac{1}{2p}}|z|e^{\frac{1}{p}(\psi(y_m)-\ell(y_m)-py_m)}\\ 
  & \lesssim (\psi''(y_{n_t+1}))^{\frac{1}{2p}}e^{\frac{1}{p}(\psi(y_{n_t+1})-\ell(y_{n_t+1})-py_{n_t+1})+t}.
\end{align*}
Similarly, if $m\le n_t$, since $t\longmapsto e^{\psi(t)-\ell(t)}$ is non-decreasing, we obtain 
\begin{align*}
  (\psi''(y_m))^{\frac{1}{2p}}e^{\frac{1}{p}(\psi(y_m)-\ell(y_m))}\frac{\dist(z,\Sigma_p)}{|z - \sigma_m|} & \asymp (\psi''(y_m))^{\frac{1}{2p}}e^{\frac{1}{p}(\psi(y_m)-\ell(y_m))} \\
  & \lesssim (\psi''(y_{n_t}))^{\frac{1}{2p}}e^{\frac{1}{p}(\psi(y_{n_t})-\ell(y_{n_t}))}.
\end{align*}
Consequently,
\begin{align*}
       \|L_z\|^p & \lesssim        e^{-2t}\left((\psi''(y_{n_t+1}))^{\frac{1}{2}}e^{\psi'(y_{n_t+1})(t-y_{n_t+1})+\psi(y_{n_t+1})}+(\psi''(y_{n_t}))^{\frac{1}{2}}e^{\psi'(y_{n_t})(t-y_{n_t})+\psi(y_{n_t})}\right).
\end{align*}
Let $e_n(z)={z^n}/{\|z^n\|_{\varphi,p}}$. Clearly we have
$
|e_{n_t}(z)|^p
+
|e_{n_t+1}(z)|^p
\le
2^p\,\|L_z\|^p.
$
Therefore,
\[
\|L_z\|^p
\asymp
e^{-2t}
(\psi''(t))^{1/2}
\Big(
e^{\psi'(y_{n_t+1})(t-y_{n_t+1})+\psi(y_{n_t+1})}
+
e^{\psi'(y_{n_t})(t-y_{n_t})+\psi(y_{n_t})}
\Big).
\]
We obtain the same estimate in both cases; to conclude,
it remains to use   $\psi''(t)=p e^{2t}\Delta\varphi(e^t)$ and apply  Green's formula
 \[
\exp\!\left(
-p\int_{e^{y}}^{|z|}
r\,\Delta\varphi(r)\,\log\frac{|z|}{r}\,dr
\right)
=
\exp\!\bigl(
-\psi(t)+\psi(y)+\psi'(y)(t-y)
\bigr),
\quad t=\log|z|.
\]

The proof is now complete.
\end{proof}
}

\section{Separated sequences}\label{sect5}

A sequence $\Gamma$ is said to be  \emph{logarithmically separated} if there exists a constant $d_\Gamma > 0$ such that
\[
\inf \{ d_{\log}(\gamma, \gamma^*) : \gamma, \gamma^* \in \Gamma,  \ \gamma \neq \gamma^* \} \ge d_\Gamma, 
\]
where
\[
d_{\log} (z,w) = \frac{|z - w|}{1 + \min(|z|,|w|)}, \qquad z,w \in \mathbb{C}.
\]

One of the main tools we rely on is the following Bernstein-type result (see Lemma \ref{bernstein}).  
We begin with the following lemma, which shows that the inequality below allows us to prove that the sequence is a finite union of logarithmically separated sequences.
{
\begin{lem}\label{ufsp} Let $\Gamma=\{\gamma_n\ :\ n\ge 0\}$ be a sequence of $\C$ such that $\gamma_n=e^{y_n}e^{\delta_n}e^{i\theta_n}$, for some real sequences $(\delta_n)$  and $(\theta_n),$ and $|\gamma_n|\le |\gamma_{n+1}|$. There exists $C(\Gamma)>0$ such that 
\[
\sum_{\gamma \in \Gamma} \frac{|f(\gamma)|^p}{\|L_\gamma\|^{p}}
\leq C(\Gamma)\, \|f\|^p_{\varphi,p}, 
\quad f \in \mathcal{F}^p_{\varphi},
\]
if and only if $\Gamma$ is a finite union of logarithmic-separated sequences.
\end{lem}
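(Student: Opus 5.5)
The plan is to prove the two implications separately. Both rest on a local subharmonicity estimate on "logarithmic discs" $D(\gamma, c|\gamma|)$ and on the fact that $\|L_z\|$ is essentially constant on such discs.

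\emph{Step 1 (comparability of the evaluation norms).} I first show that for a small fixed $c>0$,
\[
\|L_w\|^p e^{-p\varphi(w)} \asymp \|L_z\|^p e^{-p\varphi(z)}, \qquad w\in D(z,c|z|),
\]
and that $\varphi(w)-\varphi(z)$ is controlled there. I would read this off Theorem~\ref{kernelestimate}, together with Lemma~\ref{lem3.1} and the lower bound obtained from the monomials $e_{n_t}$. For $|w|=e^{s}$ with $|s-t|\le c$, we have $\psi''(s)\asymp\psi''(t)$, since $\psi''$ is non-increasing and $|\psi'''|\lesssim(\psi'')^{5/3}$. The bracketed factor changes by a bounded amount because the exponents $\psi'(y)(t-y)+\psi(y)-\psi(t)$ move by $O(1)$.

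This step is delicate. When $\psi''$ is large (it is only bounded above), the evaluation norm may genuinely oscillate at scale $(\psi'')^{-1/2}$. I therefore expect to restrict to the regime where the oscillation is bounded, using $\psi''$ bounded: it is non-increasing and positive, so $\psi''(t)\le\psi''(0)$. In particular $|\psi(s)-\psi(t)-\psi'(t)(s-t)|\lesssim c^2$.

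\emph{Step 2 (sufficiency).} Suppose $\Gamma$ is logarithmically separated with constant $d$. Then the discs $D_\gamma:=D(\gamma,\tfrac d3(1+|\gamma|))$ are pairwise disjoint for large $|\gamma|$. The function $|f|^p|\xi|^{-\psi'(u)}$ is subharmonic, where $u=\log|\gamma|$ and $|\xi|^{-\psi'(u)}$ is the modulus of an analytic branch locally. Arguing exactly as in the proof of Lemma~\ref{lem3.1}, this gives
\[
\frac{|f(\gamma)|^p}{\|L_\gamma\|^p}\lesssim \frac{1}{(\psi''(u))^{1/2}}\int_{D_\gamma}|f|^pe^{-p\varphi}\,dm .
\]
Here I use $\|L_\gamma\|^p\gtrsim(\psi''(u))^{1/2}e^{p\varphi(\gamma)}|\gamma|^{-2}$ from Step 1 and the local quadratic approximation of $\psi$. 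Since $\psi''$ is bounded below on compacts and bounded above, summing over the disjoint discs gives the Carleson inequality for a separated sequence. A finite union of separated sequences then follows by adding the finitely many bounds.

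\emph{Step 3 (necessity).} Assume the Carleson inequality holds with constant $C(\Gamma)$. It suffices to show that every disc $D(z,c(1+|z|))$ contains at most $M$ points of $\Gamma$, with $M$ independent of $z$. Indeed, a sequence with uniformly bounded counts in logarithmic discs splits, by a standard greedy coloring argument, into finitely many logarithmically separated subsequences.

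To bound the counts, I test the inequality on a peak function at $z$. For $1<p<\infty$, take the extremal function $f_z$ with $f_z(z)=\|L_z\|$ and $\|f_z\|_{\varphi,p}=1$. Alternatively, for every $p$, take a normalized monomial $e_{n_t}$, or a sum of the two monomials $e_{n_t},e_{n_t+1}$, which by the proof of Theorem~\ref{kernelestimate} realizes $\|L_z\|$ up to constants. Its modulus $|e_n(w)|^pe^{-p\varphi(w)}$ is comparable to $\|L_w\|^pe^{-p\varphi(w)}$ on $D(z,c|z|)$ for suitable $n$, by Step 1 with $c$ small. So each point of $\Gamma$ in that disc contributes $\gtrsim1$ to the left-hand side, which gives $\#(\Gamma\cap D(z,c|z|))\lesssim C(\Gamma)$. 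The main obstacle is precisely this uniform lower bound for $|e_n(w)|/\|L_w\|$ on a full logarithmic disc. It requires choosing between $n_t$ and $n_t+1$ depending on $w$, and checking via Lemma~\ref{zn} that the exponent $pnt-\psi(t)+\psi(y_n)-(pn+2)y_n+2t$ deviates by $O(1)$ from the maximum over $n$. I would handle this by taking both monomials and using the bounded-$\psi''$ quadratic control from Step 1.
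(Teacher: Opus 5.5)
The genuine gap is in Step~2. The bound you reach there, $|f(\gamma)|^p/\|L_\gamma\|^p\lesssim(\psi''(u))^{-1/2}\int_{D_\gamma}|f|^pe^{-p\varphi}\,dm$, sums to $\lesssim\|f\|_{\varphi,p}^p$ only if $\inf_u\psi''(u)>0$. ``Bounded below on compacts'' gives no uniform bound, and \eqref{conditions} allows $\psi''\to0$, e.g.\ $\varphi=(\log^+r)^\alpha$ with $1<\alpha<2$, where $\psi''(t)\asymp t^{\alpha-2}$.

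Worse, the lower bound $\|L_\gamma\|^p\gtrsim(\psi''(u))^{1/2}e^{p\varphi(\gamma)}|\gamma|^{-2}$ does not follow from Step~1 and is false in that regime. By Theorem~\ref{kernelestimate} it carries the factor $\max_{y\in\{y_n,y_{n+1}\}}\exp(-[\psi(u)-\psi(y)-\psi'(y)(u-y)])$ for $y_n\le u<y_{n+1}$. In the middle of a gap of length $y_{n+1}-y_n\asymp p/\psi''(y_n)$ this factor is of order $e^{-cp^2/\psi''(u)}$. So the danger is small $\psi''$ (long gaps), not large $\psi''$ as you suggest.

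This cannot be repaired, because for such weights the ``if'' direction is false. Take the logarithmically separated set $\Gamma=\{e^k:k\ge0\}$ and $f=e_n=z^n/\|z^n\|_{\varphi,p}$. The two-monomial description of $\|L_z\|$ behind Theorem~\ref{kernelestimate} (for $p=2$ simply $\|L_z\|^2=\sum_k|e_k(z)|^2$) gives $\|L_z\|\lesssim|e_n(z)|$ on an annulus of logarithmic width $\asymp1/\psi''(y_n)$ around $|z|=\sigma_n$. Hence $\sum_{\gamma\in\Gamma}|e_n(\gamma)|^p/\|L_\gamma\|^p\gtrsim1/\psi''(y_n)\to\infty$, while $\|e_n\|_{\varphi,p}=1$.

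Consequently your Step~2 is a valid proof only when $\inf\psi''>0$, e.g.\ $\varphi=(\log^+r)^2$. In that case it is a more direct route than the paper's expansion in the basis given by $\Sigma_{\varphi,p}$ followed by a Schur-type estimate. In general one needs an extra hypothesis, such as a uniform bound on the number of points of $\Gamma$ in each annulus $\{y_s\le\log|z|<y_{s+1}\}$. The paper's own final estimate $\sum_m(\sum_n|a_n|e^{-c|y_{s_m}-y_n|})^p\lesssim\|(a_n)\|_p^p$ tacitly uses exactly this, since it requires $\#\{m:s_m=s\}$ to be bounded.

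Steps~1 and~3, on the other hand, are sound. Step~1 holds in every regime, because each exponential in the bracket of Theorem~\ref{kernelestimate} has logarithmic derivative in $[-p,p]$. Step~3 takes a different route from the paper. The paper tests on $g_\sigma=\|L_\sigma\|G_{\Sigma_p}/(G'_{\Sigma_p}(\sigma)(\cdot-\sigma))$ and only bounds $\#(\Gamma\cap D(\sigma,\varepsilon\sigma))$ near (rotated) reference points; these discs cover the plane only when the gaps $y_{n+1}-y_n$ stay bounded. Your normalized monomials, combined with the two-monomial upper bound for $\|L_w\|$, bound the count in every disc $D(z,c|z|)$, which is precisely what the colouring argument needs.
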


\begin{proof}
Let 
$\Sigma_p = \Sigma_{\varphi,p} = \{\sigma_n = e^{y_n} e^{i\theta_n} : \psi'(y_n) = pn + 2,\ n\ge 0 \}$,
the reference sequence for $\cF^p_\varphi$. Consider
\[
g_{\sigma}(z)
=
\|L_{\sigma}\|\,
\frac{G_{\Sigma_p}(z)}{G'_{\Sigma_p}(\sigma)\,(z-\sigma)},
\quad z \in \mathbb{C}.
\]
We have $\|g_\sigma\|_{\varphi,p} \asymp 1$. For every $z\in\C$, we write $|z|=e^t$. If $y_m\le t< y_{m+1}$, and suppose that $\dist(z,\Sigma_p)=|z-\sigma_m|$. We have by \eqref{estgsigmA} 
\begin{align}\label{estgsigma}
\left|\|L_{\sigma_n}\|\frac{G_{\Sigma_p}(z)}{G'_{\Sigma_p}(\sigma_n)(z-\sigma_n)}  \right|^p 
   & \asymp \left(\psi''(y_n)\right)^{\frac{1}{2}}e^{\psi(y_n)-\ell(y_n)}e^{\ell(t)-2t}\frac{\dist(z,\Sigma_p)^p}{|z-\sigma_n|^p}.
\end{align}
Hence, for every small $\varepsilon>0$
\[
\left|g_{\sigma_n}(z)  \right|^p \asymp \left(\psi''(y_n)\right)^{\frac{1}{2}}e^{\psi(y_n)-\ell(y_n)+\ell(t)-2t},\quad z\in D(\sigma_n,\varepsilon|\sigma_n|).
\]
On the other hand, note that $D(\sigma_n,\varepsilon|\sigma_n|)\subset \cC(e^{y_n-2\varepsilon},e^{y_n+2\varepsilon})$ and hence for $z\in D(\sigma_n,\varepsilon|\sigma_n|)$ we have  
\[
\ell(t)-\ell(y_n) = \psi'(y_n)(t-y_n),\quad y_n \le t< y_{n+1},
\]
if $y_n\le t\le y_n+2\varepsilon$, and 
\begin{align*}
    \ell(t)-\ell(y_n) & = \psi'(y_{n-1})t-p\sum_{k=0}^{n-1}y_k-\psi'(y_n)y_n+p\sum_{k=0}^ny_k  \\ 
    & = \psi'(y_{n-1})t+py_n-(pn+2)y_n\\
    & = \psi'(y_{n-1})(t-y_n),\quad y_{n-1} \le t< y_{n}
\end{align*}
whenever $y_n-2\varepsilon\le t\le y_n$. Furthermore, by Theorem \ref{kernelestimate}  we have 
$$ \|L_z\|^p\asymp |e_n(z)|^p\asymp |z^n|^p/\|z^n\|^p_{\varphi,p} \asymp \left(\psi''(y_n)\right)^{\frac{1}{2}}e^{pnt+\psi(y_n)-\psi'(y_n)y_n}.$$
Hence
\begin{equation}
    \frac{\left|g_{\sigma_n}(z)  \right|^p}{\|L_z\|^p} \asymp e^{-\ell(y_n)+\ell(t)-2t-pnt+\psi'(y_n)y_n},\quad z\in D(\sigma_n,\varepsilon|\sigma_n|).
\end{equation}
It follows that 
\[
\ell(t)-\ell(y_n)-2t-pnt+\psi'(y_n)y_n = \begin{cases}
    0,\quad  y_n\leq t\leq y_n+2\varepsilon, \\
    (\psi'(y_n)-\psi'(y_{n-1}))(y_n-t),\quad y_n-2\varepsilon\leq t\leq y_n.
\end{cases}
\]
Thus \[
\ell(t)-\ell(y_n)-2t-pnt+\psi'(y_n)y_n\ge 0,\quad z\in D(\sigma_n,\varepsilon|\sigma_n|),
\]
which  ensures that $\frac{\left|g_{\sigma_n}(z)  \right|^p}{\|L_z\|^p} \gtrsim 1$, for every $z\in D(\sigma_n,\varepsilon|\sigma_n|)$. Hence
\begin{align*}
1 & \gtrsim 
\sum_{\gamma\in\Gamma} 
\frac{|g_\sigma(\gamma)|^{p}}{\|L_\gamma\|^{p}} \ \gtrsim 
\sum_{\gamma\in\Gamma \cap D(\sigma, \varepsilon\sigma)}
\frac{|g_\sigma(\gamma)|^{p}}{\|L_\gamma\|^{p}} \
\gtrsim 
\operatorname{Card}(\Gamma \cap D(\sigma, \varepsilon\sigma)).
\end{align*}
Finally
\[
\sup_{\sigma\in\Sigma_p} \operatorname{Card}(\Gamma \cap D(\sigma, c\sigma)) < \infty.
\]
Hence, $\Gamma$ is a finite union of logarithmic separated sequences.\\

Conversely, since $\Sigma_p$ is a complete interpolating sequence for $\cF^p_\varphi$ then every $f\in\cF^p_\varphi$ can be written as 
\[
f(z) = \sum_{n\ge 0} a_n \|L_{\sigma_n}\| \frac{G_{\Sigma_p}(z)}{G'_{\Sigma_p}(\sigma_n)(z-\sigma_n)},\quad z\in\C,
\]
for some unique $(v_n)\in\ell^p$ and $\|f\|_{\varphi,p}\asymp\|(a_n)\|_p$. Thus
\begin{align*}
    \sum_m\frac{|f(\gamma_m)|^p}{\|L_{\gamma_m}\|^p} & = \sum_m \left|\sum_{n\ge 0} a_n \frac{\|L_{\sigma_n}\|}{\|L_{\gamma_m}\|} \frac{G_{\Sigma_p}(\gamma_m)}{G'_{\Sigma_p}(\sigma_n)(\gamma_m-\sigma_n)} \right|^p \lesssim \|(a_n)\|^p_p
\end{align*}
The last inequality holds if and only if the matrix 
\[
a_{m,n} = \frac{\|L_{\sigma_n}\|}{\|L_{\gamma_m}\|} \frac{G_{\Sigma_p}(\gamma_m)}{G'_{\Sigma_p}(\sigma_n)(\gamma_m-\sigma_n)}
\]
maps continuously $\ell^p$ on itself. Let $|\gamma_m|=e^{x_m}$ and denote by $s_m$ the unique integer such that $y_{s_m}\le x_m< y_{s_m+1}$. Using \eqref{ggg} for $z=\gamma_m$, \eqref{ggg1}, Lemma \ref{zn} and Theorem \ref{kernelestimate},  we obtain
\begin{equation}
    |a_{m,n}|^p \asymp \frac{(\psi''(y_n))^{1/2}}{(\psi''(y_{s_m}))^{1/2}}e^{\vartheta(m,n)}\frac{|\gamma_m-\sigma_{s_m}|^p}{|\gamma_m-\sigma_n|^p}
    \end{equation}
where \[
\vartheta(m,n) = \ell(x_m)-2x_m-ps_mx_m-\psi(y_{s_m})+\psi'(y_{s_m})y_{s_m}+\psi(y_n)-\ell(y_n).
\]
Using the definition of $\ell$ as well as Lemma \eqref{lem22}, we obtain
\begin{align*}
   \vartheta(m,n) & = -p\sum_{k=0}^{s_m}y_k -\psi(y_{s_m})+\psi'(y_{s_m})y_{s_m}+\psi(y_n)-\psi'(y_n)y_n+p\sum_{k=0}^ny_k\\
   & = -\frac{p}{2}(y_{s_m}-y_n)(1+o(1)).
\end{align*}
If $s_m\ge n$,  we have $|\gamma_m-\sigma_{s_m}|=\dist(\gamma_m,\Sigma_p)\le |\gamma_m-\sigma_n|$ 
\[
|a_{m,n}|\lesssim e^{-c|y_m-y_n|}.
\]
If $s_m<n$, we have $|\gamma_m-\sigma_n|\asymp|\sigma_n|$ and
$|\gamma_m-\sigma_{s_m}|\le|\gamma_m|$, which gives again
\[
|a_{m,n}|^p \lesssim \frac{(\psi''(y_n))^{1/2}}{(\psi''(y_{s_m}))^{1/2}} e^{-\frac{p}{2}(y_{s_m}-y_n)(1+o(1))}e^{py_{s_m}-py_n}\lesssim e^{-c|y_m-y_n|}.
\]
Therefore  if $\Gamma$ is a finite union of logarithmically separated sequences then 
\[
\sum_m\left|\sum_n a_n a_{m,n}\right|^p \lesssim \sum_m\left(\sum_n |a_n| e^{-c|y_{s_m}-y_n|}\right)^p \lesssim \|(v_n)\|^p_p \asymp \|f\|^p_{\varphi,p}.
\]
This completes the proof.
\end{proof}
}
{

\begin{lem}\label{bernstein}
Let $0<p<\infty$ and let $\varepsilon>0$ be a sufficiently small parameter. 
Let $z\in \C \setminus \Sigma_p$. Then for every 
$w\in D(z,\varepsilon |z|)\setminus \Sigma_p$ we have
\[
\left|\frac{f(z)}{G_{\Sigma_p}(z)}-\frac{f(w)}{G_{\Sigma_p}(w)}\right|
\lesssim 
d_{\log}(z,w)\,
\frac{\|L_z\|}{|G_{\Sigma_p}(z)|}\,
\|f\|_{\varphi,p},
\]
for every $f\in \mathcal{F}^p_\varphi$.
\end{lem}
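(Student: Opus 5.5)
We would prove the Bernstein-type estimate by expanding $f$ in the basis associated with the reference sequence. By Theorem \ref{theoRef}, every $f\in\cF^p_\varphi$ can be written as
\[
f(z)=\sum_n a_n \|L_{\sigma_n}\|\frac{G_{\Sigma_p}(z)}{G'_{\Sigma_p}(\sigma_n)(z-\sigma_n)},\qquad \|(a_n)\|_p\asymp\|f\|_{\varphi,p}.
\]
Dividing by $G_{\Sigma_p}$ gives a "partial fraction" expression with no zeros in the denominator:
\[
\frac{f(z)}{G_{\Sigma_p}(z)}-\frac{f(w)}{G_{\Sigma_p}(w)}=\sum_n a_n\frac{\|L_{\sigma_n}\|}{G'_{\Sigma_p}(\sigma_n)}\Big(\frac{1}{z-\sigma_n}-\frac{1}{w-\sigma_n}\Big)=(w-z)\sum_n a_n\frac{\|L_{\sigma_n}\|}{G'_{\Sigma_p}(\sigma_n)}\frac{1}{(z-\sigma_n)(w-\sigma_n)}.
\]
The factor $|w-z|$ will give $d_{\log}(z,w)$ after pulling out one power of $|z|$. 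The task is then to bound the remaining sum by $\|L_z\|/(|z|\,|G_{\Sigma_p}(z)|)$ times $\|(a_n)\|_p$.

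Set $|z|=e^t$ and let $n_t$ be such that $y_{n_t}\le t<y_{n_t+1}$. We split the sum into two parts: indices $n$ with $\sigma_n$ far from $z$ (outside $D(z,2\varepsilon|z|)$), and the boundedly many indices with $\sigma_n$ near $z$.

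For the far indices, $|w-\sigma_n|\asymp|z-\sigma_n|$ and $|z-\sigma_n|\gtrsim |z|+|\sigma_n|$. Each term is then bounded by
\[
\frac{1}{|z|}\cdot\frac{\|L_{\sigma_n}\|}{|G'_{\Sigma_p}(\sigma_n)|\,|z-\sigma_n|}.
\]
This is exactly the quantity $I_n(z)/|G_{\Sigma_p}(z)|$ from the proof of Theorem \ref{kernelestimate}. There we showed that $\big(\sum_n I_n(z)^q\big)^{1/q}\asymp\|L_z\|$ for $p>1$, and $\sup_n I_n(z)\asymp\|L_z\|$ for $p\le1$. So Hölder's inequality (for $p>1$), or the embedding $\ell^p\subset\ell^1$ together with the sup bound (for $p\le1$), gives the required estimate with $\|(a_n)\|_p$.

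For the near indices, the factor $1/(w-\sigma_n)$ is the dangerous one, since $w$ may lie close to $\sigma_n$. This is the main obstacle. Two observations handle it. First, by logarithmic separation of $\Sigma_p$, only $O(1)$ points $\sigma_n$ lie in $D(z,2\varepsilon|z|)$ once $\varepsilon$ is small, so only finitely many terms need treatment. Second, the singular part is harmless after multiplying back by $G_{\Sigma_p}(w)$, but the statement divides by $G$, so we argue differently.

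The plan for the near indices is to apply the maximum principle to the holomorphic function
\[
h(\xi)=\frac{f(\xi)}{G_{\Sigma_p}(\xi)}-\frac{f(\sigma_n)}{\cdots}\cdot\text{(polar part)}.
\]
Concretely, subtract from $f/G_{\Sigma_p}$ the finitely many polar terms $a_n\|L_{\sigma_n}\|/(G'_{\Sigma_p}(\sigma_n)(\xi-\sigma_n))$ with $\sigma_n$ near $z$. The remainder $R(\xi)$ is holomorphic in $D(z,2\varepsilon|z|)$. By the far-index estimate applied uniformly on the circle $|\xi-z|=2\varepsilon|z|$, it satisfies $|R|\lesssim \|L_z\|\|f\|/|G_{\Sigma_p}(z)|$ there; here we use that $\|L_\xi\|/|G_{\Sigma_p}(\xi)|$ is comparable to its value at $z$ on that circle, which follows from \eqref{ggg} and Theorem \ref{kernelestimate} since $\ell,\psi$ vary by $O(1)$ on scale $\varepsilon$. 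The Cauchy estimate then gives $|R(z)-R(w)|\lesssim (|z-w|/|z|)\,\|L_z\|\|f\|/|G_{\Sigma_p}(z)|$.

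For the finitely many polar terms, each coefficient satisfies $|a_n|\|L_{\sigma_n}\|/|G'_{\Sigma_p}(\sigma_n)|\lesssim |z|\,\|L_z\|\|f\|/|G_{\Sigma_p}(z)|$, up to the factor $\dist(z,\Sigma_p)$. The difference of polar parts is $(w-z)/((z-\sigma_n)(w-\sigma_n))$. If $w$ is near $\sigma_n$, this is not controlled uniformly. We expect to resolve this either by restricting to $\dist(w,\Sigma_p)\gtrsim\dist(z,\Sigma_p)$, or by showing that the implicit normalization, with $\|L_z\|/|G_{\Sigma_p}(z)|$ evaluated at the point nearer the zero, absorbs it. This is where the main difficulty of the proof lies.
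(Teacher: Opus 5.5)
Your handling of the far indices is correct and matches the paper's argument. You expand $f$ in the Lagrange-type basis attached to $\Sigma_p$, write the difference as partial fractions, and control the sum by H\"older against $(I_n(z))_n$ in $\ell^q$ via \eqref{lzz}, or by the supremum \eqref{kern} when $p\le 1$. Reusing $I_n(z)$ is tidier than the paper's explicit $I_1/I_2$ splitting.

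The gap is the near indices, and it cannot be closed: the inequality is false as stated whenever $D(z,\varepsilon|z|)$ contains a point $\sigma_m$ of $\Sigma_p$. Take $f=\|L_{\sigma_m}\|\,G_{\Sigma_p}/\bigl(G'_{\Sigma_p}(\sigma_m)(\cdot-\sigma_m)\bigr)$, so that $\|f\|_{\varphi,p}\asymp 1$ and
\[
\frac{f(z)}{G_{\Sigma_p}(z)}-\frac{f(w)}{G_{\Sigma_p}(w)}=\frac{\|L_{\sigma_m}\|}{G'_{\Sigma_p}(\sigma_m)}\cdot\frac{w-z}{(z-\sigma_m)(w-\sigma_m)}.
\]
For fixed $z$ this tends to infinity as $w\to\sigma_m$, while the right-hand side of the lemma stays bounded. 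Neither of your suggested repairs survives. Take $z=\sigma_m+\delta$, $w=\sigma_m-\delta$ with $0<\delta\ll\sigma_m$, and set $K_m:=\|L_{\sigma_m}\|/|G'_{\Sigma_p}(\sigma_m)|$. Both points are at distance $\delta$ from $\Sigma_p$, and the left-hand side is $\asymp K_m/\delta$. On the other hand, \eqref{ggg} and Theorem \ref{kernelestimate} give $\|L_z\|/|G_{\Sigma_p}(z)|\asymp\|L_w\|/|G_{\Sigma_p}(w)|\asymp K_m/\delta$. So $d_{\log}(z,w)$ times either normalization is only $\asymp K_m/|z|$.

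The paper's own proof has the same hole, so you will not find the missing step there. After isolating $n=m,m+1$, it replaces $1/(|z-\sigma_m||w-\sigma_m|)$ by $1/|z|^2$, which requires $|z-\sigma_m|,|w-\sigma_m|\gtrsim|z|$. The case $p\le1$ silently uses the same bound when invoking \eqref{kern}.

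What is true is the lemma under the extra hypothesis $\dist(z,\Sigma_p)\ge 2\varepsilon|z|$. Then every index is far, and your far-index argument proves it completely, with no maximum-principle step. Since $\varphi$ is radial, $e^{i\alpha}\Sigma_p$ is again a complete interpolating sequence with identical estimates. So for a given $z$ you may take $\alpha=\arg z+\pi$ and work with $G_{e^{i\alpha}\Sigma_p}$, whose zeros satisfy $|z-e^{i\alpha}\sigma_n|=|z|+\sigma_n$. This rotated version is all that Corollary \ref{cor22} needs: apply it with $z=\gamma_k$, $w=\gamma_j\in D(\gamma_k,\varepsilon|\gamma_k|)$ and $f=f_k$.
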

\begin{proof}
    Since $\Sigma_p=\Sigma_{\varphi,p}$ is a complete interpolating sequence for $\mathcal F^p_\varphi$. every function $f\in\mathcal{F}^p_\varphi$ can be written as 
\[
f(z)=\sum_n a_n \|L_{\sigma_n}\| \frac{G_{\Sigma_p}(z)}{G'_{\Sigma_p}(\sigma_n)(z-\sigma_n)}
\]
for a unique sequence $(a_n)\in\ell^p$ and such that $\|f\|_{\varphi,p}\asymp\|(a_n)\|_p$. It follows that
\begin{align}
    \left|\frac{f(z)}{G_{\Sigma_p}(z)}-\frac{f(w)}{G_{\Sigma_p}(w)}\right| & =\left|\sum_n a_n \frac{\|L_{\sigma_n}\|}{G'_{\Sigma_p}(\sigma_n)}\left(\frac{1}{(z-\sigma_n)}-\frac{1}{w-\sigma_n}\right)\right|\nonumber\\
    & = \left|\sum_n a_n \frac{\|L_{\sigma_n}\|}{G'_{\Sigma_p}(\sigma_n)}\frac{z-w}{(z-\sigma_n)(w-\sigma_n)}\right|\nonumber\\
    & \lesssim |z-w| \sum_n |a_n| \frac{(\psi''(y_n))^{\frac{1}{2p}}}{|z-\sigma_n||w-\sigma_n|}e^{\frac{1}{p}(\psi(y_n)-\ell(y_n))}. \label{berns}.
\end{align}
{\bf$\bullet$ The case where $1<p<\infty$.} Let $q$ the H\"older conjugate of $p$. Using H\"older's inequality we obtain
\begin{align*}
    \left|\frac{f(z)}{G_{\Sigma_p}(z)}-\frac{f(w)}{G_{\Sigma_p}(w)}\right| 
        & \lesssim |z-w| \|f\|_{\varphi,p} \left[\sum_n \frac{(\psi''(y_n))^{\frac{q}{2p}}}{|z-\sigma_n|^q|w-\sigma_n|^q}e^{\frac{q}{p}(\psi(y_n)-\ell(y_n))}\right]^{1/q}.
\end{align*}
Let $m$ the unique integer such that $\sigma_m\le |z|<\sigma_{m+1}$, and assume that $w\in D(z,\varepsilon|z|)$. Then $D(z,\varepsilon|z|)\subset \mathcal{C}(\sigma_{m-1},\sigma_{m+1})$. Write 
\begin{align*}
    I(z,w) & = \sum_n \frac{(\psi''(y_n))^{\frac{q}{2p}}}{|z-\sigma_n|^q|w-\sigma_n|^q}e^{\frac{q}{p}(\psi(y_n)-\ell(y_n))} \\
    & = \sum_{n\le m-1} + \sum_{n\ge m+2}\frac{(\psi''(y_n))^{\frac{q}{2p}}}{|z-\sigma_n|^q|w-\sigma_n|^q}e^{\frac{q}{p}(\psi(y_n)-\ell(y_n))} + \frac{(\psi''(y_m))^{\frac{q}{2p}}}{|z-\sigma_m|^q|w-\sigma_m|^q}e^{\frac{q}{p}(\psi(y_m)-\ell(y_m))} \\ & \qquad + \frac{(\psi''(y_{m+1}))^{\frac{q}{2p}}}{|z-\sigma_{m+1}|^q|w-\sigma_{m+1}|^q}e^{\frac{q}{p}(\psi(y_{m+1})-\ell(y_{m+1}))} \\
    & = I_1 + I_2 + \frac{(\psi''(y_m))^{\frac{q}{2p}}}{|z-\sigma_m|^q|w-\sigma_m|^q}e^{\frac{q}{p}(\psi(y_m)-\ell(y_m))} + \frac{(\psi''(y_{m+1}))^{\frac{q}{2p}}}{|z-\sigma_{m+1}|^q|w-\sigma_{m+1}|^q}e^{\frac{q}{p}(\psi(y_{m+1})-\ell(y_{m+1}))}, 
\end{align*}
where
\begin{align*}
I_1&:= \sum_{n\le m-1}
\frac{(\psi''(y_n))^{\frac{q}{2p}}}
{|z-\sigma_n|^q|w-\sigma_n|^q}
e^{\frac{q}{p}(\psi(y_n)-\ell(y_n))}\\
&\asymp \frac{1}{|z|^{2q}}
\sum_{n\le m-1}
(\psi''(y_n))^{\frac{q}{2p}}
e^{\frac{q}{p}(\psi(y_n)-\ell(y_n))}\\
&\lesssim \frac{1}{|z|^{2q}}
\sum_{n\le m}
(\psi''(y_n))^{\frac{q}{2p}}
e^{\frac{q}{p}(\psi(y_n)-\ell(y_n))}\\
&\asymp 
\frac{(\psi''(y_m))^{\frac{q}{2p}}}{|z|^{2q}}
e^{\frac{q}{p}(\psi(y_m)-\ell(y_m))}.
\end{align*}

Similarly
\begin{align*}
I_2
&:=\sum_{n\ge m+2}
\frac{(\psi''(y_n))^{\frac{q}{2p}}}
{|z-\sigma_n|^q|w-\sigma_n|^q}
e^{\frac{q}{p}(\psi(y_n)-\ell(y_n))}\\
&\asymp
\sum_{n\ge m+2}
(\psi''(y_n))^{\frac{q}{2p}}
e^{\frac{q}{p}(\psi(y_n)-\ell(y_n))-2qy_n}.
\end{align*}
Consequently 
\begin{align*}
   \left|\frac{f(z)}{G_{\Sigma_p}(z)}\right.&\left.-\frac{f(w)}{G_{\Sigma_p}(w)}\right| \lesssim |z-w|\|f\|_{\varphi,p}\left(\frac{(\psi''(y_m))^{\frac{1}{2p}}}{|z|^{2}}e^{\frac{1}{p}(\psi(y_m)-\ell(y_m))}\right.\\ &  + (\psi''(y_{m+1}))^{\frac{1}{2p}}e^{\frac{1}{p}(\psi(y_{m+1})-\ell(y_{m+1})-2py_{m+1})} 
    +\frac{(\psi''(y_m))^{\frac{1}{2p}}}{|z-\sigma_m||w-\sigma_m|}e^{\frac{1}{p}(\psi(y_m)-\ell(y_m))}\\ &  \left. + \frac{(\psi''(y_{m+1}))^{\frac{1}{2p}}}{|z-\sigma_{m+1}||w-\sigma_{m+1}|}e^{\frac{1}{p}(\psi(y_{m+1})-\ell(y_{m+1}))}\right)\\
    & \lesssim d_{\log}(z,w)\|f\|_{\varphi,p}\left(\frac{(\psi''(y_m))^{\frac{1}{2p}}}{|z|}e^{\frac{1}{p}(\psi(y_m)-\ell(y_m))} + e^{-y_{m+1}}(\psi''(y_{m+1}))^{\frac{1}{2p}}e^{\frac{1}{p}(\psi(y_{m+1})-\ell(y_{m+1}))}\right)\\
    & \lesssim d_{\log}(z,w)\frac{\|L_z\|}{|G_{\Sigma_p}(z)|}\|f\|_{\varphi,p}.
\end{align*}
{\bf $\bullet$ The case where $0<p\le1$.} Relation \eqref{berns} gives 
\begin{align*}
    \left|\frac{f(z)}{G_{\Sigma_p}(z)}-\frac{f(w)}{G_{\Sigma_p}(w)}\right|^p
    & \lesssim |z-w|^p \sum_n |a_n|^p \frac{(\psi''(y_n))^{\frac{1}{2}}}{|z-\sigma_n|^p|w-\sigma_n|^p}e^{\psi(y_n)-\ell(y_n)}\\
    & \lesssim |z-w|^p\|f\|^p_{\varphi,p} \sup_n \left(\frac{(\psi''(y_n))^{\frac{1}{2}}}{|z-\sigma_n|^p|w-\sigma_n|^p}e^{\psi(y_n)-\ell(y_n)}\right)
\end{align*}
Following \eqref{kern}, we obtain 
\[
\left|\frac{f(z)}{G_{\Sigma_p}(z)}-\frac{f(w)}{G_{\Sigma_p}(w)}\right| \lesssim d_{\log}(z,w) \frac{\|L_z\|}{|G_{\Sigma_p}(z)|}\|f\|_{\varphi,p}.
\]
 The proof is therefore complete.
\end{proof}

\begin{cor}\label{cor22}
Every set of interpolation for $\mathcal{F}^p_\varphi$, {$0<p\le \infty$},  is logarithmically separated sequence.
\end{cor}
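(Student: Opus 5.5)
The plan is to argue by contradiction. Since $\Gamma$ is an interpolating set for $\cF^p_\varphi$, I will build, for each pair of distinct points, a function in the space that separates them, and then use Lemma~\ref{bernstein} to show that a normalized function cannot change much over a small logarithmic distance.

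\textbf{Step 1: bounded-norm solutions.} By the open mapping theorem (for $p\ge1$; for $0<p<1$ the $p$-Banach, i.e.\ $F$-space, version), there is a constant $M$ with the following property. For every $\gamma\in\Gamma$ there is $f_\gamma\in\cF^p_\varphi$ with
\[
f_\gamma(\gamma)=\|L_\gamma\|,\qquad f_\gamma(\gamma')=0 \ \ (\gamma'\in\Gamma\setminus\{\gamma\}),\qquad \|f_\gamma\|_{\varphi,p}\le M .
\]
For $p=\infty$ I use the same construction with $\|L_\gamma\|$ replaced by $e^{\varphi(\gamma)}$.

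\textbf{Step 2: set up the contradiction.} Suppose $\Gamma$ is not logarithmically separated. Then there are pairs $\gamma\neq\gamma^*$ with $d_{\log}(\gamma,\gamma^*)\to0$. For $|\gamma|$ large this gives $\gamma^*\in D(\gamma,\varepsilon|\gamma|)$. Only finitely many points of $\Gamma$ lie near the origin, so small moduli cause no trouble.

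\textbf{Step 3: apply Lemma~\ref{bernstein}.} Choose $z$ near $\gamma$ and $w=\gamma^*$, both outside $\Sigma_p$. If $\gamma$ or $\gamma^*$ lies on $\Sigma_p$, I perturb slightly and pass to the limit by continuity; alternatively I replace $\Sigma_p$ by a rotated copy $e^{i\theta}\Sigma_p$, which is still complete interpolating because the weight is radial, so Lemma~\ref{bernstein} applies to it as well. With this choice, $f_\gamma(w)=0$, and Lemma~\ref{bernstein} gives
\[
\frac{|f_\gamma(z)|}{|G_{\Sigma_p}(z)|}\lesssim d_{\log}(z,\gamma^*)\,\frac{\|L_z\|}{|G_{\Sigma_p}(z)|}\,M ,
\qquad\text{that is}\qquad
|f_\gamma(z)|\lesssim M\,d_{\log}(z,\gamma^*)\,\|L_z\| .
\]
Letting $z\to\gamma$ yields
\[
\|L_\gamma\|\lesssim M\,d_{\log}(\gamma,\gamma^*)\,\|L_\gamma\| ,
\]
so $d_{\log}(\gamma,\gamma^*)\gtrsim 1/M$. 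This contradicts Step~2.

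\textbf{Main obstacle.} The delicate point is the case $p=\infty$. Here I would either repeat the Bernstein estimate using the $\cF^\infty_\varphi$ expansion along $\Sigma_{\varphi,\infty}$ (the proof of Theorem~\ref{theoRef} gives the needed kernel bounds with $\ell^\infty$ coefficients, and the argument mirrors the $0<p\le1$ case with a supremum), or embed $\cF^\infty_\varphi$ into $\cF^{p}_{\varphi+\epsilon\log}$-type spaces. I would try the direct sup-norm Bernstein estimate first.
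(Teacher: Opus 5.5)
Your route is the paper's: take a uniformly bounded solution $f_\gamma$ of $f_\gamma(\gamma)=\|L_\gamma\|$, $f_\gamma=0$ on $\Gamma\setminus\{\gamma\}$, and insert it into Lemma~\ref{bernstein} with $z=\gamma$, $w=\gamma^*$. Your open-mapping step and the reduction to large $|\gamma|$ make explicit what the paper takes for granted.

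\textbf{The perturbation fix fails.} Of your two fixes for points of $\Sigma_p$, moving $z$ off $\Sigma_p$ and letting $z\to\gamma$ is fine. Moving $w$ off $\gamma^*\in\Sigma_p$ is not: it destroys $f_\gamma(w)=0$, and $f_\gamma(w)/G_{\Sigma_p}(w)\to f_\gamma'(\gamma^*)/G_{\Sigma_p}'(\gamma^*)$, which need not vanish.

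\textbf{Use the rotation, always and quantitatively.} Apply it in every case, not only when a point lies on $\Sigma_p$, and choose it quantitatively, e.g.\ $\theta=\pi+\arg\gamma$, so that $\gamma$ and $\gamma^*$ both stay at distance $\gtrsim|\gamma|$ from $e^{i\theta}\Sigma_p$. This matters because Lemma~\ref{bernstein} is not valid as stated when $w$ is close to $\Sigma_p$. Take $f$ with $f(\sigma_n)=\delta_{nm}\|L_{\sigma_m}\|$, fix $z$ with $|z-\sigma_m|=\varepsilon\sigma_m/2$, and let $w\to\sigma_m$. The left side grows like $|w-\sigma_m|^{-1}$, while the right side stays bounded. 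The lemma's proof tacitly uses $|w-\sigma_n|\gtrsim|z|$ for the two indices $n$ nearest to $|z|$.

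When both points are kept away from the rotated zeros, that proof goes through uniformly in $\theta$, since only moduli enter. It then gives directly $\|L_\gamma\|=|f_\gamma(\gamma)|\lesssim d_{\log}(\gamma,\gamma^*)\,\|L_\gamma\|\,M$. The paper's proof applies the lemma at $z=\gamma_k$, $w=\gamma_j$ without this precaution and has the same weakness. With the quantitative rotation, your version is the more rigorous one.

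\textbf{The case $p=\infty$.} The paper also just invokes the finite-$p$ lemma here, so you are not behind it, but what you give is only a plan. The plan does work. Open mapping is immediate, since $f\mapsto(f(\gamma)e^{-\varphi(\gamma)})_\gamma$ is automatically bounded on $\cF^\infty_\varphi$. Expand $f=\sum_n v_n e^{\varphi(\sigma_n)}G/(G'(\sigma_n)(\cdot-\sigma_n))$ along the rotated $\Sigma_{\varphi,\infty}$, with $|v_n|\le\|f\|_{\varphi,\infty}$, and keep $z,w$ away from its zeros. For $f(w)=0$ you get
\[
\frac{|f(z)|}{|G(z)|}\lesssim\frac{|z-w|}{|z|}\,\|f\|_{\varphi,\infty}\sum_n\frac{e^{\varphi(\sigma_n)}}{|G'(\sigma_n)|\,|z-\sigma_n|}\lesssim d_{\log}(z,w)\,\|f\|_{\varphi,\infty}\,\frac{e^{\varphi(z)}}{|G(z)|}.
\]
The last sum is exactly the absolute series bounded in the $p=\infty$ part of the proof of Theorem~\ref{theoRef}. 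Applied to $f_\gamma$ with $f_\gamma(\gamma)=e^{\varphi(\gamma)}$, this gives $d_{\log}(\gamma,\gamma^*)\gtrsim 1/M$.
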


\begin{proof}
   If $\Gamma$ is an interpolating sequence, then for every $k$ there exists $f_k\in\mathcal F^p_\varphi$ such that 
\[
f_k(\gamma_k) = \|L_{\gamma_k}\|, \quad f_k(\gamma_j)=0, \quad \|f_k\|_{\varphi,p}\lesssim 1.
\]
It follows that 
\[
\frac{\|L_{\gamma_k}\|}{|G_{\Sigma_p}(\gamma_k)|}\lesssim d_{\log}(\gamma_k,\gamma_j)\frac{\|L_{\gamma_k}\|}{|G_{\Sigma_p}(\gamma_k)|}.
\]
Hence \(d_{\log}(\gamma_k,\gamma_j)\gtrsim 1\) and hence $\Gamma$ is logarithmically separated.
\end{proof}

\section{Proof of Theorem \ref{thm1}}\label{sect6}

\subsection{Sufficient conditions}
    The sequence $\Gamma$ is a complete interpolating set for $\cF^p_\varphi$ if and only if the operator
    $$\begin{array}{cccc}
        T_{\Gamma}: & \cF^p_\varphi & \longrightarrow & \ell^p \\
          & f & \longmapsto & \left(f(\gamma_n)/\|L_{\gamma_n}\|\right) 
    \end{array}$$
    is bounded and invertible from $\cF^p_\varphi$ to $\ell^p$.

{Since $\Gamma$ is logarithmically separated then by Lemma \ref{ufsp} $T_\Gamma$ is bounded from $\cF^p_\varphi$ to $\ell^p$.}  By Lemma \ref{uniq}, $\Gamma$ is a uniqueness set for $\cF^p_\varphi$ under the condition $\Delta_N<\frac{1}{2}$, hence,  $T_{\Sigma_p}$ is injective. On the other hand, fo every $v=(v_n)\in\ell^p$,   consider the function
    \begin{equation}
        f_v(z) := \sum_n v_n \|L_{\gamma_n}\| \frac{G_{\Gamma}(z)}{G'_\Gamma(\gamma_n)(z-\gamma_n)},\quad z\in\C.
    \end{equation}
    The estimates of the function $G_\Gamma$ ensure that the last series converges uniformly on every compact set of the complex plane to an entire function $f_v$, which satisfies the interpolation condition ${f_v(\gamma_n)}/{\|L_{\gamma_n}\|}=v_n$, for every integer $n$. It remains to prove that $f_v\in \cF^p_\varphi$. Indeed, 
    \begin{align*}
        \|f_v\|_{\Sigma_{\varphi,p}}^p & = \sum_{m\ge 0} \left|\sum_n v_n \frac{ \|L_{\gamma_n}\|}{\|L_{\sigma_m}\|} \frac{G_{\Gamma}(\sigma_m)}{G'_\Gamma(\gamma_n)(\sigma_m-\gamma_n)} \right|^p.
    \end{align*}
    The last quantity is finite, for every $v\in\ell^p$, if and only if the matrix 
    \begin{equation}
        A_{n,m} :=  \frac{ \|L_{\gamma_n}\|}{\|L_{\sigma_m}\|} \frac{G_{\Gamma}(\sigma_m)}{G'_\Gamma(\gamma_n)(\sigma_m-\gamma_n)}
    \end{equation}
    defines a bounded operator on  $\ell^p$.

     Recall the estimates of the function $G_\Gamma$
    \begin{equation*}
        \left|G_\Gamma(\sigma_m)\right|\asymp \frac{|\sigma_m-\gamma_{n_m}|}{|\gamma_{n_m}|}\prod_{k=0}^{n_m-1}\left|\frac{\sigma_m}{\gamma_k}\right|,\quad \left|G'_\Gamma(\gamma_n)\right| \asymp \frac{1}{|\gamma_n|}\prod_{k=0}^{n-1}\left|\frac{\gamma_n}{\gamma_k}\right|,
    \end{equation*}
where the integer $n_m$ is the unique integer for which $|\gamma_{n_m}|\le \sigma_m <|\gamma_{n_m+1}|$.

{ Since $(\psi''(y_n)\delta_n)$ is bounded, there exists $M>0$ such that $
|\psi''(y_n)\delta_n|\le M $ for all  $n$.
From
\[
y_{n_m}+\delta_{n_m} \le y_m < y_{n_m+1}+\delta_{n_m+1}, 
\]
the monotonicity of $\psi'$ and $\psi'(y_m)=pm+2$, we obtain
\[
\psi'(y_{n_m}+\delta_{n_m}) \le pm+2 \le \psi'(y_{n_m+1}+\delta_{n_m+1}).
\]
Hence 
\[
 pn_m+\psi''(y_{n_m})\delta_{n_m}(1+o(1))\le pm\le p(n_{m}+1)+\psi''(y_{n_m+1})\delta_{n_m+1}(1+o(1)) \\
\]
and therefore 
\[
 |m-n_m|\le 2M.
 \]
}

 In particular,  $n_{m}=m+j$ for some integer $j\in\left[-\frac{M}{p},\frac{M}{p}+1\right]$. 
Let $x_n=y_n+\delta_n$,  we obtain
\begin{align*}
   |A_{n,m}|^p & \lesssim \frac{\left(\psi''(x_n)\right)^{\frac{1}{2}}}{\left(\psi''(y_m)\right)^{\frac{1}{2}}}\frac{|\sigma_m-\gamma_{m+j}|^p}{|\sigma_m-\gamma_n|^p}\frac{\sigma_m^{p(m+j)}}{|\gamma_n|^{pn}}\left(\prod_{k=0}^{m+j}\frac{1}{|\gamma_k|^p}\right) \left(\prod_{k=0}^n|\gamma_k|^p\right)e^{\psi(x_n)-\psi(y_m)-2x_n+2y_m}\\
    & \lesssim  
    \frac{\left(\psi''(x_n)\right)^{\frac{1}{2}}}{\left(\psi''(y_m)\right)^{\frac{1}{2}}}\frac{|\sigma_m-\gamma_{m+j}|^p}{|\sigma_m-\gamma_n|^p}\times e^{\theta(n,m)},
\end{align*}
where
\begin{align*}
     \theta(n,m)
     & = \psi(y_n+\delta_n)-\psi(y_m)+ \psi'(y_{m+j})y_m-\psi'(y_n)(y_n+\delta_n)-p\sum_0^{m+j}y_k \\ &\quad +p\sum_{0}^{n}y_k-p\sum_0^{m+j}\delta_k+p\sum_{0}^{n}\delta_k.
\end{align*}
By Lemma \ref{lem22}, we know that
\[
-p\sum_0^m y_k = \psi(y_m)-\psi'(y_m)y_m-y_m\left(\frac{p}{2}+o(1)\right) + O(1).
\]
It follows that
\begin{align*}
     \theta(n,m)     & = \psi(y_n+\delta_n)-\psi(y_n)-\psi'(y_n)\delta_n+\psi(y_{m+j})-\psi(y_m)+\psi'(y_{m+j})(y_m-y_{m+j})\\
     & \qquad+(y_n-y_{m+j})\left(\frac{p}{2}+o(1)\right) +p\left(\sum_0^{n}\delta_k-\sum_0^{m+j}\delta_k\right)\\
     & = \frac{1}{2}\left(\psi''(y_n)\delta_n^2-\psi''(y_{m+j})(y_m-y_{m+j})^2\right)+(y_n-y_{m+j})\left(\frac{p}{2}+o(1)\right) \\
     & \qquad+p\left(\sum_0^{n}\delta_k-\sum_0^{m+j}\delta_k\right).
\end{align*}
And then
\begin{align*}
    \theta(n,m) & = (y_n-y_{m+j})\left(\frac{p}{2}+o(1)\right)  +(p+o(1))\left(\sum_0^{n}\delta_k-\sum_0^{m+j}\delta_k\right).
\end{align*}
If $n\le m+j$, then  $\dist(\sigma_m,\Gamma)=|\sigma_m-\gamma_{m+j}|\leq |\sigma_m-\gamma_n|$. \\
Recall that
\[
\Delta_N = \limsup_{n \to \infty} \frac{1}{y_{n+N} - y_n} \left| \sum_{k=n+1}^{n+N} \delta_k \right|.
\]
Thus, for every $\varepsilon>0$, there exists $n_0$ such that for all $n \ge n_0$,
\[
\left| \sum_{k=n+1}^{n+N} \delta_k \right| \le (\Delta_N + \varepsilon) \, (y_{n+N} - y_n).
\]
It follows that
\begin{align*}
    \sum_0^{n}\delta_k-\sum_0^{m+j}\delta_k = \sum_{n+1}^{m+j}\delta_k \le (\Delta_N+\varepsilon)(y_{m+j}-y_n).
\end{align*}
Consequently
$$
    \left|A_{n,m}\right|^p \lesssim \exp\left(-\left[\frac{p}{2}-p\Delta_N-p\varepsilon\right]|y_{m+j}-y_n|\right)$$
    if and only if 
    $$ \left|A_{n,m}\right| \lesssim \exp\left(-\left[\frac{1}{2}-\Delta_N-\varepsilon\right]|y_{m+j}-y_n|\right).
$$
A similar estimate holds in the case $m+j< n$, 
$$\frac{|\sigma_m-\gamma_{m+j}|^p}{|\sigma_m-\gamma_n|^p}\lesssim e^{(p+o(1))(y_{m+j}-y_n)}.$$  Therefore,
\begin{equation}
    \left|A_{n,m}\right| \lesssim \exp\left(-\left[\frac{1}{2}-\Delta_N-\varepsilon\right]|y_{m+j}-y_n|\right).
\end{equation}
Finally, if $\Delta_N<\frac{1}{2}$ then the matrix $(A_{n,m})$ maps continuously $\ell^p$ on itself.\\

\subsection{Necessary conditions}
Let $0< p<\infty$ and suppose that $\Gamma$ is a complete interpolating sequence for $\cF^p _\varphi$.

\medskip

\noindent
{\bf 1.} Since $\Gamma$ is a complete interpolating sequence for $\cF^p_\varphi$ then it is an interpolating sequence for $\cF^p_\varphi$ and hence by Corollary \ref{cor22}, $\Gamma$ is logarithmically separated.
\medskip

\noindent{\bf 2.}  We will prove that $(\psi''(y_n)\delta_n)$ is bounded.

Since $\Gamma$ is a complete interpolating set for $\cF^p_\varphi$, then for every $\gamma\in\Gamma$ there exists a unique function $f_\gamma\in \cF^p_\varphi$ vanishing on $\Gamma\setminus\{\gamma\}$ and satisfying 
\[ f_\gamma(\gamma)=1,\quad \mbox{and}\quad \|f_\gamma\|_{\varphi,p}\asymp\frac{1}{\|L_\gamma\|}.\]
Using Hadamard factorization theorem,  $f_\gamma(z)=\alpha \frac{G_\Gamma(z)}{z-\gamma}$, for some constant $\alpha\in\C$. Since $f_\gamma(\gamma)=1$, we obtain $\alpha=1$, hence $f_\gamma(z)=\frac{G_\Gamma(z)}{z-\gamma}.$  By continuity of the point evaluation map $L_z$ on $\cF^p_\varphi$ and the estimates of $\|L_z\|$ (see Lemma \ref{lem3.1}) we obtain
\begin{equation}\label{id1}
    |f_\gamma(z)|^p \lesssim \|f_\gamma\|^p_{\varphi,p}\|L_z\|^p\lesssim    \frac{\left(\psi''(\log|z|)\right)^{\frac{1}{2}}}{\|L_\gamma\|^p(1+|z|^2)}e^{p\varphi(z)},\quad z\in\C.
\end{equation}
Assume now that the sequence $\left(\psi''(y_n)\delta_n\right)$ contains a subsequence $\left(\psi''(y_{n_k})\delta_{n_k}\right)$  tending to $+\infty$ (the case $-\infty$ is similar). For every $k$ there exists $m_k\in\N$ such that $\gamma_{n_k}$ is close to $\sigma_{m_k}$ and $|n_k-m_k|\rightarrow\infty$. We may assume   that $|\gamma_{n_k}|\leq \sigma_{m_k}<|\gamma_{n_k+1}|$.\\ 
Recall the  estimates:
\begin{equation}\label{id2}
    \left|G_\Gamma(\sigma_{m_k})\right|\asymp\frac{|\sigma_{m_k}-\gamma_{n_k}|}{|\gamma_{n_k}|}\prod_{j=0}^{n_k-1}\frac{\sigma_{m_k}}{|\gamma_j|},\qquad \left|G'_\Gamma(\gamma_{n_k})\right|\asymp\frac{1}{|\gamma_{n_k}|}\prod_{j=0}^{n_k-1}\frac{|\gamma_{n_k}|}{|\gamma_j|}.
\end{equation}
Combining \eqref{id1} and \eqref{id2}, let $x_n=y_n+\delta_n$, for $z=\sigma_{m_k}$ yields to
\begin{align}
e^{pn_k(y_{m_k}-x_{n_k})} = 
  \left(\frac{\sigma_{m_k}}{|\gamma_{n_k}|}\right)^{pn_k}\asymp \left|f_{\gamma_{n_k}}(\sigma_{m_k})\right|^p \lesssim \frac{\left(\psi''(y_{m_k})\right)^{\frac{1}{2}}}{\|L_{\gamma_{n_k}}\|^p}e^{\psi(y_{m_k})-2y_{m_k}}\label{id3}.
\end{align}
On the other hand,
\[
\|L_{\gamma_{n_k}}\|^p \ge \frac{|\gamma_{n_k}|^{pm_k}}{\|z^{m_k}\|^p_{\varphi,p}}\asymp \left(\psi''(y_{m_k})\right)^{\frac{1}{2}}e^{pm_kx_{n_k}+\psi(y_{m_k})-(pm_k+2)y_{m_k}}.
\]
Hence by  \eqref{id3}, we obtain 
\begin{align*}
     e^{pn_k(y_{m_k}-x_{n_k})} & \lesssim 
     e^{pm_k(y_{m_k}-x_{n_k})} \Longleftrightarrow e^{p(n_k-m_k)(y_{m_k}-x_{n_k})}\lesssim 1.
\end{align*}
Since $n_k-m_k\rightarrow\infty$, this is impossible. Hence  $(\psi''(y_n)\delta_n)$ is bounded.\\

\medskip

\noindent
{\bf 3.} We will prove that $\Delta_N<\frac{1}{2}$ for some integer $N$. Assume, by contradiction, that for every $N\geq 1$,  $\Delta_N=\frac{1}{2}+\kappa_N$, for some  $\kappa_N\ge 0$.\\

Since $\Gamma$ is a complete interpolating sequence, then the matrix 
\begin{equation}
  A_{n,m} :=  \frac{ \|L_{\gamma_n}\|}{\|L_{\sigma_m}\|} \frac{G_{\Gamma}(\sigma_m)}{G'_\Gamma(\gamma_n)(\sigma_m-\gamma_n)}
 \end{equation}
 defines a bounded (invertible) operator on  $\ell^p$. 
 
Since $\left(\psi''(y_n)\delta_n\right)$ is bounded, there exists $M>0$ such that for every non negative integer $m$ we have $\dist(\sigma_m,\Gamma)=|\sigma_m-\gamma_{m+j}|$, for some integer $|j|\le M$. Using the standard estimates for $G_\Gamma$, $G'_\Gamma$, $\|L_{\sigma_m}\|$, and $\|L_{\gamma_n}\|$, we have 
\[
\left|G_\Gamma(\sigma_m)\right|\asymp\frac{|\sigma_m-\gamma_{m+j}|}{|\gamma_{m+j}|}\prod_{k=0}^{m+j-1}\left|\frac{\sigma_m}{\gamma_k}\right|,\quad \left|G'_\Gamma(\gamma_n)\right|\asymp\frac{1}{|\gamma_{n}|}\prod_{k=0}^{n-1}\left|\frac{\gamma_n}{\gamma_k}\right|, 
\]
\[
\|L_{\sigma_m}\|^p\asymp \left(\psi''(y_m)\right)^{1/2}e^{\psi(y_m)-2y_m}, \quad 
\|L_{\gamma_n}\|^p\geq \frac{e^{pnx_n}}{\|z^{n}\|^p_{\varphi,p}}\asymp \left(\psi''(y_{n})\right)^{1/2}e^{\psi(y_{n})-\psi'(y_{n})y_{n}+pnx_n}. 
\]
Consequently,
\begin{align*}
    |A_{n,m}|^p & \asymp \frac{\|L_{\gamma_n}\|^p}{\|L_{\sigma_m}\|^p}\frac{\dist(\sigma_m,\Gamma)^p}{|\sigma_m-\gamma_n|^p}e^{p(m+j)y_m-pn(y_n+\delta_n)}e^{p\sum_{k=0}^n(y_k+\delta_k)-p\sum_{k=0}^{m+j}(y_k+\delta_k)}\\
    & \gtrsim \frac{\left(\psi''(x_n)\right)^{1/2}}{\left(\psi''(y_m)\right)^{1/2}} \frac{\dist(\sigma_m,\Gamma)^p}{|\sigma_m-\gamma_n|^p}e^{\Theta(n,m)},
\end{align*}
where 
\begin{align*}
    \Theta(n,m) &:=\psi(y_{n})-\psi'(y_{n})y_{n}+pnx_n-\psi(y_m)+2y_m+p(m+j)y_m-pnx_n\\&\quad+p\sum_{k=0}^n(y_k+\delta_k)-p\sum_{k=0}^{m+j}(y_k+\delta_k).
\end{align*}
On the other hand, as in Lemma \ref{lem22}
\begin{equation}
   - p\sum_{k=1}^m y_k  =- \psi'(y_m)y_m +\psi(y_m)-\frac{p}{2}y_m - \left(\frac{p^2}{12}+o(1)\right)\frac{1}{\psi''(y_m)}+ O(1).
\end{equation}
This estimates imply that 
\begin{align*}
    \Theta(n,m) & 
        =
       \psi(y_{m+j})-\psi(y_m)-\psi'(y_{m+j})(y_{m+j}-y_m)+\frac{p}{2}(y_n-y_{m+j}) \\&\quad +\left(\frac{p^2}{12}+o(1)\right)\left(\frac{1}{\psi''(y_n)}-\frac{1}{\psi''(y_{m+j})}\right)\pm p\sum_{k=m+j+1}^n\delta_k\\
           & \ge \frac{p}{2}(y_n-y_{m+j})+C_\psi\left(\frac{1}{\psi''(y_n)}-\frac{1}{\psi''(y_{m+j})}\right)\pm p\sum_{k=m+j+1}^n\delta_k.
\end{align*}
It follows that
\begin{equation}\label{eAnm}
|A_{n,m}|^p \gtrsim \exp\left[-\frac{p}{2}|y_n-y_{m+j}|+C_\psi\left(\frac{1}{\psi''(y_n)}-\frac{1}{\psi''(y_{m+j})}\right)\pm p\sum_{k=m+j+1}^n\delta_k\right].
\end{equation}
Suppose now that 
$$
\Delta_N{=\limsup_{n}\frac{1}{y_{n+N}-y_n}\left|\sum_{k=n+1}^{n+N}\delta_k\right|}=\frac{1}{2}+\kappa_N,
$$
for some non negative sequence $(\kappa_N)$. It follows that for every integer $N$ there exists $n_N$ sufficiently large such that 
\[
\left|\sum_{k=n_N+1}^{n_N+N}\delta_k\right| \ge \frac{1+2\kappa_N}{2}(y_{n_N+N}-y_{n_N})-1.
\]
Two cases occur:

\medskip
\noindent
$\bullet$ If there exists a sequence of integers $(N_l)$ such that 
$$\sum_{k=n_l+1}^{n_l+N_l}\delta_k>0$$ (where, for simplicity, $n_l$ denotes the integer $n_{N_l}$). Then by  \eqref{eAnm}, we obtain  
    \begin{align*}
        |A_{n_l+N_l,n_l}|^p 
        & \gtrsim \exp\left[C_\psi\left(\frac{1}{\psi''(y_{n_l+N_l})}-\frac{1}{\psi''(y_{n_l})}\right)\right]
    \end{align*}
This shows that the sequence  $(A_{n_l+N_l,N_l})$ is unbounded and hence $(A_{n,m})$ cannot define a bounded operator on $\ell^p$.\\

\medskip
\noindent
$\bullet$  Suppose now that there exists $N_0$ such that  
$$\sum_{k=n_N+1}^{n_N+N}\delta_k<0,$$ for every $N\ge N_0$. Then  by   \eqref{eAnm}, we obtain 
      \begin{align*}
        |A_{n_N,n_N+N}|^p & \gtrsim \exp\left[-C_\psi\left(\frac{1}{\psi''(y_{n_N+N})}-\frac{1}{\psi''(y_{n_N})}\right)+\frac{p\kappa_{N}}{2}(y_{n_N+N}-y_{n_N})\right]
    \end{align*}
    If $(\kappa_{N})$ contains a subsequence bounded below by some $\varepsilon>0$, we denote this subsequence again by $(\kappa_N)$ for simplicity. Then 
    \begin{align*}
        |A_{n_N,n_N+N}|^p & \gtrsim \exp\left[-C_\psi\left(\frac{1}{\psi''(y_{n_N+N})}-\frac{1}{\psi''(y_{n_N})}\right)+\frac{p\varepsilon}{2}(y_{n_N+N}-y_{n_N})\right]
    \end{align*}
    This implies that $(A_{n_N,n_N+N})_N$ is unbounded and hence $A$ is not bounded in $\ell^p$.
    
Finally, assume that $(\kappa_N)\to 0$. Since for every convex function  $f$ (with $f'$ non-decreasing)  $\frac{t}{2}f'(t/2)\le f(t)$, we obtain  
\[\frac{n_N}{2}\left(\frac{1}{\psi''(n_N/2+N)}-\frac{1}{\psi''(y_{n_N}/2)}\right)\le y_{n_N+N}-y_{n_N}.\]
It follows that 
\begin{align*}
  |A_{n_N,n_N+N}  |^p 
    \gtrsim \exp\left[-C_\psi\left(\frac{1}{\psi''(y_{n_N+N})}-\frac{1}{\psi''(y_{n_N})}\right)+\frac{p\kappa_Nn_N}{4}\left(\frac{1}{\psi''(y_{\frac{n_N}{2}+N})}-\frac{1}{\psi''(y_{\frac{n_N}{2}})}\right)\right].
\end{align*}
By choosing $n_N$ sufficiently large, the right-hand side becomes unbounded, and hence $(A_{n_N,n_N+N})$. Consequently the matrix $A$ cannot define a bounded operator on $\ell^p$.

%%%%%%%%%%%%%%%%%%%%%%%%%%%%%%%%%%%%%%%
%%%%%%%%%%%%%%%%%%%%%%%%%%%%%%%%%%%%%%%%%%
%%%%%%%
\section{Proof of Theorem \ref{inftyThm}.}\label{sect7}
The statements proved in this section are essentially the same as in \cite{BDHK,KO20}. We include the proof here for completeness.\\

Suppose that $\cK_\Gamma=\{K_\gamma/\|K_\gamma\|_{\varphi,2}\}_{\gamma\in \Gamma}$ is a Riesz basis for $\cF^2_\varphi$ and let $\gamma^*\in\C\setminus \Gamma$. We will prove that $\Gamma^*:=\Gamma\cup\{\gamma^*\}$ is a complete interpolating sequence for $\cF^\infty_\varphi$.

First, if $F$ is a function of $\cF^\infty_\varphi$ that vanishes on $\Gamma^*$ then $F(z)=(z-\gamma^*)G_\Gamma(z)h(z)$ for some entire function $h$. According to Lemma \ref{lem3.3}, we have 
\[
1\gtrsim |F(z)|e^{-\varphi(z)} \gtrsim\frac{\dist(z,\Gamma)}{(1+|z|)^{\frac{1}{2}+\Delta_N+\varepsilon}}|h(z)|,\quad z\in\C.
\]
Together with the condition $\Delta_N<\frac{1}{2}$, ensure that $h$ must be identically zero. Hence $\Gamma^*$ is a uniqueness set for $\cF^\infty_\varphi.$

It remains to prove that $\Gamma^*$ is an interpolating sequence for $\cF^\infty_\varphi$. For simplicity of notations, we denote  $\gamma^*=\gamma_{-1}$ and let $v=(v_n)_n\in \ell^\infty$. Consider the function 
\begin{equation}
    F_v(z) = \sum_{n\ge -1} v_n e^{\varphi(\gamma_n)}\frac{G_{\Gamma^*}(z)}{G_{\Gamma^*}'(\gamma_n)(z-\gamma_n)}
\end{equation}
The  series converges uniformly on every compact set of $\C$ to an entire function that solves the interpolation problem : $F_v(\gamma_n)e^{-\varphi(\gamma_n)}=v_n$, for every $n\ge -1$. {We only need to show that} $F_v\in\cF^\infty_\varphi$. For this we observe that
\begin{align*}
    \|F_v\|_{\varphi,\infty} & \asymp \sup_m \left|e^{-\varphi(\sigma_m)} \sum_n v_n e^{\varphi(\gamma_n)}\frac{G_{\Gamma^*}(\sigma_m)}{G_{\Gamma^*}'(\gamma_n)(\sigma_m-\gamma_n)} \right| \asymp \sup_m\left|\sum_n v_n B_{n,m}\right|,
\end{align*}
where 
\begin{align*}
    |B_{n,m}| 
     & \asymp e^{\varphi(\gamma_n)-\varphi(\sigma_m)}\frac{|\sigma_m|}{|\gamma_n|}\left|\frac{G_{\Gamma}(\sigma_m)}{G_{\Gamma}'(\gamma_n)(\sigma_m-\gamma_n)}\right| = |A_{n,m}|e^{o(1)|y_n-y_m|}.
\end{align*}
The estimates for $(A_{n,m})$ imply that $(B_{n,m})$ defines a bounded operator on $\ell^\infty$ and then the desired result follows. \\

\medskip

Conversely, suppose $\Gamma^*=\Gamma\cup\{\gamma^*\}$ is a complete interpolating sequence for $\cF^\infty_\varphi$. To prove that $\cK_\Gamma$ is a Riesz basis for $\cF^2_\varphi$,  it suffices to show that $(1)-(3)$ of Theorem \ref{thm1} hold.

{First, by Corollary \ref{cor22}, since $\Gamma^*$ is a complete interpolating set for $\cF^\infty_\varphi$ then it is logarithmically separated. By consequence, condition $(1)$ is satisfied.\\

On the other hand, recall} that 
\begin{align*}
    |B_{n,m}| & \asymp e^{\varphi(\gamma_n)-\varphi(\sigma_m)}\frac{|\sigma_m|}{|\gamma_n|}\left|\frac{G_{\Gamma}(\sigma_m)}{G_{\Gamma}'(\gamma_n)(\sigma_m-\gamma_n)}\right| \ge \frac{(\psi''(y_m))^{1/2}}{(\psi''(x_n))^{1/2}} |A_{n,m}|.
\end{align*}
Following the lines of the proof of Theorem \ref{thm1}, the unboundedness of the sequence $(\psi''(y_n)\delta_n)$ implies that $(A_{n,m})$,  and therefore  $(B_{n,m})$ are unbounded. Hence, $(B_{n,m})$ cannot define a bounded operator
on $\ell^\infty$, and  condition $(2)$ follows.\\

Moreover, if 
$$\Delta_N = \frac{1}{2} + \kappa_N$$ for some non-negative sequence $(\kappa_N)$, then $(A_{n,m})$ is unbounded, and consequently so is $(B_{n,m})$. Thus, $(B_{n,m})$ cannot represent a bounded operator on $\ell^\infty$, which implies that condition $(3)$ is verified.

\section{Proof of Theorem \ref{densitythm}}\label{sect8}
To prove Theorem \ref{densitythm}, we adopt an approach inspired by \cite{BDHK,S95}.  
To establish the necessary conditions for a sequence to be sampling or interpolating, we rely on the following lemma. The proof in our setting is identical to that of \cite[Lemma 5.1]{BDHK} (see also \cite[Lemma 40]{NMO}), using  the point evaluation norm estimates and the canonical product bounds derived above.

\begin{lem}\label{Lemma 5.1}
Let $\varepsilon>0$. Assume that the sequence $\Lambda$ is interpolating for $\mathcal{F}^p_{(1-\varepsilon)\varphi}$, $0<p<\infty$, and that $\Sigma$ is sampling for $\mathcal{F}^2_\varphi$ and logarithmically separated. 
Then, for sufficiently small $\delta>0$ and sufficiently large $R$, we have
\[
(1-c(\delta)) \, \mathrm{Card}\bigl(\Lambda \cap \mathcal{C}(x,Rx)\bigr) \le \mathrm{Card}\bigl(\Sigma \cap \mathcal{C}(\delta x, Rx/\delta)\bigr),
\]
where $c(\delta) \to 0$ as $\delta \to 0$.
\end{lem}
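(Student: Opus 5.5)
The plan is to follow Seip's comparison scheme from \cite{S95}, in the version used in \cite[Lemma 5.1]{BDHK}. We build an operator between two finite-dimensional spaces of functions attached to the annulus $\cC(x,Rx)$, and then compare dimensions. The interpolation hypothesis on $\Lambda$ for the weaker weight $(1-\varepsilon)\varphi$ produces, for each $\lambda\in\Lambda\cap\cC(x,Rx)$, a function $g_\lambda\in\cF^p_{(1-\varepsilon)\varphi}$ with $g_\lambda(\mu)=\delta_{\lambda\mu}\|L_\lambda\|$ for $\mu\in\Lambda$, with uniformly bounded norm. We then multiply $g_\lambda$ by a localizing entire factor. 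A natural choice is a normalized power $h_\lambda(z)=(z/\lambda)^{k}$ with $k\approx\psi'(\log|\lambda|)$ times a small multiple of $\varepsilon$, or a normalized kernel of $\cF^2_\varphi$ at $\lambda$. The aim is that $F_\lambda=g_\lambda h_\lambda$ lies in $\cF^2_\varphi$ and decays geometrically in $|\log|z|-\log|\lambda||$. The $\varepsilon\varphi$ room in the weight is exactly what pays for this decay, through the convexity of $\psi$ and the estimates of Lemma~\ref{lem3.1} and Lemma~\ref{zn}.

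The key steps run as follows. First, check that the family $\{F_\lambda\}$ is almost orthogonal in the sense needed: after normalization, its Gram matrix is a small perturbation of the identity. This uses the interpolation property, since $F_\lambda(\mu)=0$ for $\mu\neq\lambda$, together with the localization. Second, use that $\Sigma$ is sampling for $\cF^2_\varphi$, so that $\|F\|_{\varphi,2}^2\asymp\sum_{\sigma\in\Sigma}|F(\sigma)|^2/\|L_\sigma\|^2$. This turns the span $V$ of the $F_\lambda$ into a subspace of $\ell^2(\Sigma)$ via the map $T:F\mapsto(F(\sigma)/\|L_\sigma\|)_\sigma$. Third, let $P$ be the coordinate projection onto $\Sigma\cap\cC(\delta x,Rx/\delta)$ and show that $\|(I-P)TF\|\le c(\delta)\|TF\|$ for $F\in V$. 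The tail sums over $\sigma$ with $|\sigma|<\delta x$ or $|\sigma|>Rx/\delta$ are controlled by the geometric decay of $F_\lambda$ in $\log|z|$, by logarithmic separation of $\Sigma$ (which bounds the number of points in each dyadic logarithmic shell by Lemma~\ref{ufsp}), and by the kernel asymptotics of Theorem~\ref{kernelestimate}. Fourth, finish with linear algebra. Let $E_\lambda$ be the images under $PT$, and compute $\operatorname{tr}$ of the Gram-type operator $\sum_\lambda\langle\cdot,E_\lambda^*\rangle E_\lambda$, where $E^*_\lambda$ is the dual system obtained from the interpolation property. This trace is at most $\operatorname{rank}\le\operatorname{Card}(\Sigma\cap\cC(\delta x,Rx/\delta))$. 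It is also at least $(1-c(\delta))\operatorname{Card}(\Lambda\cap\cC(x,Rx))$, because each diagonal term is close to $1$ by step three.

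The main obstacle is step three, which is the uniform smallness of the tails. One needs the decay of $F_\lambda$ away from $|\lambda|$ to be uniform in $x$, even though $\psi''$ may tend to zero, so that the natural scale in $\log|z|$ is $(\psi''(\log x))^{-1/2}$ and not $1$. I would first compare $|F_\lambda(z)|e^{-\varphi(z)}$ with $e^{-\frac{\varepsilon}{p}(\psi(t)-\psi(s)-\psi'(s)(t-s))}$, where $t=\log|z|$ and $s=\log|\lambda|$. The estimate would then be summed over $\Sigma$, using the log-separation count together with the condition $|\psi'''|=O((\psi'')^{5/3})$ to keep $\psi''$ comparable on the relevant windows. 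If the quadratic gain is too weak when $\psi''$ is small, the fallback is to choose $h_\lambda$ as a power $z^k$ with $k$ offset by a fixed amount. This yields linear decay $e^{-c|t-s|}$, as in Lemma~\ref{ufsp} (where the matrix entries decay like $e^{-c|y_m-y_n|}$), and that decay suffices once $\log(1/\delta)$ is large.
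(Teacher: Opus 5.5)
Your scheme is the Seip-type comparison argument that the paper defers to (\cite[Lemma~5.1]{BDHK}, after \cite{S95}): interpolating functions $g_\lambda$ for $(1-\varepsilon)\varphi$ times a localizing factor, transfer to $\ell^2(\Sigma)$ by sampling, then a counting argument. The genuine gap is step three, exactly where you suspected, and your fallback does not close it. Set $f(t)=\log\bigl(|z|^k e^{-\varepsilon\varphi(z)}\bigr)=kt-\frac{\varepsilon}{p}\psi(t)$. This is concave with curvature $\frac{\varepsilon}{p}\psi''(t)$, so changing $k$ only tilts the profile. If $f(s\pm d)-f(s)\le C-cd$, concavity forces $\frac{\varepsilon}{p}\int_{s-d}^{s+d}\psi''\ge 2c-2C/d$. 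For fixed $d=\log(1/\delta)$ and $\psi''\to0$ this leaves $cd\le C+o(1)$, that is, no decay at all. An offset $k$ gives decay on one side and growth on the other. The decay $e^{-c|y_m-y_n|}$ in Lemma~\ref{ufsp} comes from the exact match between the piecewise linear profile $\ell$ of $G_{\Sigma_p}$ and $\|L_z\|$, not from a localization of this kind.

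In fact, when $\psi''\to0$ the inequality is false with $\delta,R$ independent of $x$, so no proof can close this step. The conditions \eqref{conditions} allow this, for example with $\varphi=(\log^+r)^{3/2}$.
\begin{itemize}
\item Take $\Lambda=\Sigma_{(1-\varepsilon)\varphi,p}$ and $\Sigma=\Sigma_{\varphi,2}$. By Theorem~\ref{theoRef} and Corollary~\ref{cor22}, $\Lambda$ is interpolating for $\cF^p_{(1-\varepsilon)\varphi}$, and $\Sigma$ is sampling for $\cF^2_\varphi$ and logarithmically separated.
\item In the variable $u=\psi_p'(\log|z|)$, the points of $\Lambda$ sit at $u=(pn+2)/(1-\varepsilon)$ and those of $\Sigma$ at $u=p(m+1)$.
\item The annulus $\cC(\delta x,Rx/\delta)$ has $u$-length at most $\psi_p''(\log(\delta x))\,(\log R+2\log\frac1\delta)$, which tends to $0$ as $x\to\infty$.
\item Modulo $p$, $(pn+2)/(1-\varepsilon)\equiv\frac{p\varepsilon}{1-\varepsilon}n+\frac{2}{1-\varepsilon}$, with step in $(0,p)$ when $\varepsilon<\frac12$. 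Hence infinitely many $n$ lie at distance $\ge c_0>0$ from $p\mathbb{Z}$.
\item For $x=e^{\eta((pn+2)/(1-\varepsilon))}$ with such $n$ large, $\cC(x,Rx)$ contains a point of $\Lambda$ while $\cC(\delta x,Rx/\delta)$ contains no point of $\Sigma$. You would need $1-c(\delta)\le0$.
\end{itemize}
Your argument therefore works only when $\inf\psi''>0$, that is $\psi''\to c>0$, the situation actually covered by \cite{BDHK}. In that case Theorem~\ref{kernelestimate} gives $\|L_z\|\asymp e^{\varphi(z)}(\psi''(t))^{1/(2p)}|z|^{-2/p}$, and the factor $\exp\bigl(-\frac{\varepsilon}{2p}(\inf\psi'')(t-s)^2\bigr)$ is uniform in $s$. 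For $\psi''\to0$ the annuli and the enlargement must be measured in the $\psi'$-scale, as in $\cD^\pm_\varphi$.

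Two smaller repairs are needed even in the good case.
\begin{itemize}
\item Step one is false as stated. Neighbouring $F_\lambda$ overlap, so their Gram matrix is not close to the identity. What you have, and need, is that $\{TF_\lambda\}$ is a Riesz sequence with bounds independent of $x$ and $R$. The upper bound comes from a Schur test on the decay. The lower bound comes from $F(\lambda)=c_\lambda F_\lambda(\lambda)$ and the Carleson inequality of Lemma~\ref{ufsp}.
\item In step four, trace $\le$ rank holds only if $E^*_\lambda$ is the biorthogonal system of $\{TF_\mu\}$ inside their span. Then your operator is $P\Pi$ with $\Pi$ an orthogonal projection. A dual system built from the kernels at $\Lambda$ gives an oblique projection $Q$, and only $\operatorname{tr}\le\|Q\|\operatorname{rank}P$, which loses the sharp constant.
\end{itemize}
Alternatively, a tail bound $\|(I-P)TF\|\le\theta\|TF\|$ with $\theta<1$, uniform on $V$ (which the Schur test provides), makes $PT|_V$ injective and gives the count directly.
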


\subsection*{Proof of Theorem \ref{densitythm}}
\subsection*{Proof of \eqref{a}.} 
For $r$ and $R$ two positive real parameters, let
\[
\cC(r,R) := \left\{z\in\C\ |z|=e^t\ \mbox{and}\quad r<\psi'(t)<r+R\right\}.
\]
The condition $\cD^+_\varphi(\Gamma)<\frac{1}{p}$ implies that there exists a large positive integer $M$ such that 
\[
n_{\Gamma}(\cC(r,r+pM)) < \frac{pM}{p} = M,\quad r>0.
\]
Hence, for every positive integer $m$, the annulus 
$$\cC_{m,M}:=\cC(pmM+2-\frac{p}{2},pM)$$  contains at most $M-1$ points from $\Gamma$. 

Note that $\cC_{m,M}$ contains $M$ points from $\Sigma_p$. Indeed 
\begin{align*}
    n_{\Sigma_p}(\cC_{m,M})     & = {\rm Card}\left(\left\{n\ge0\ :\ mM\le n\le M(m+1)-1\right\}\right)   = M
\end{align*}
Consequently, there exist $\delta>0$ and $s_m>0$ such that the annulus  
$$ B_m =\cC(s_m,s_m+\delta) \subset \cC_{m,M}$$ 
contains no point from $\Gamma$.

The idea is to complete $\Gamma$ by a sequence $\Gamma'$ (will be chosen later) in such a way that the sequence $\widetilde{\Gamma}=\Gamma\cup \Gamma'$  satisfies conditions of Theorem \ref{thm1}. We denote the points of $\Gamma\cap\cC_{m,M}$ by $\gamma_1^m,\gamma_2^m,\cdots,\gamma_{l_m}^m$, for some $0\leq l_m\le M-1$ (where $|\gamma_j^m|\leq |\gamma_{j+1}^m|$), and write $$\gamma_l^m=\sigma_{mM+l}e^{\delta_{mM+l}}e^{i\theta_{mM+l}},\quad 1\leq l\leq l_m.$$
Recall that each annulus {$\cC_{m,M}$ } contains at least one point from $\Lambda$ for which no thing is associated (free).
 
Take now $N$ a large integer, and consider the grouping of annuli :
$$\ccA_k\ :=\ \bigcup_{m=kN+1}^{kN+N}\ \cC_{m,M},\quad k\geq 0.$$

It follows that, every set $\ccA_k$ contains at least $N$ free points from $\Sigma_p$,   which will assign to $\Gamma'$. We will construct such points in such a way that 
\begin{equation}\label{condi}
 \frac{1}{y_{(kN+2)M}-y_{(kN+1)M}}\left|\sum_{n=(kN+1)M+1}^{(kN+N+1)M}\ \delta_n\right|\ \leq\ C.
\end{equation}
for some absolute constant $C$. Such construction implies that 
\begin{align*}
     \frac{1}{y_{(kN+N+1)M}-y_{(kN+1)M}}\left|\sum_{n=(kN+1)M+1}^{(kN+N+1)M}\ \delta_n \right| \le C  \frac{y_{(kN+2)M}-y_{(kN+1)M}}{y_{(kN+N+1)M}-y_{(kN+1)M}} < \frac{1}{2}
\end{align*}
for large $N$.Thus, from now on, $k$ will be fixed. 

The points of $\Gamma'\cap \ccA_m$ will be chosen within the annuli $B_m=\cC(s_m,s_m+\delta)$. Note that we could even place all missing points in a single annulus $B_m$ and still obtain logarithmically separation sequence. However, in that case the separation 
  constant would depend on $\delta$, $M$, and $N$ and may therefore be rather small. Let us consider all possible sequences 
  $$\displaystyle \Gamma'\subset \bigcup_{m=kN+1}^{kN+N} B_m$$ with separation constants uniformly bounded away from zero, and  write the elements of $\widetilde{\Gamma} = \Gamma\cup \Gamma'$ as $\gamma_n=\sigma_ne^{\delta_n}e^{i\theta_n}$. 
  Note that for any $m$ and $n = mM + 1,\cdots, mM+l_m$ the values $\delta_n$ are already fixed. Moreover, since for these $n$ the corresponding $\sigma_n$ are in the same annulus $\cC_{m,M}$. We have 
\[
\eta(pmM+2-\frac{p}{2}) < y_n <  \eta(pM(m+1)+2-\frac{p}{2})
\]
and 
\[
\eta(pmM+2-\frac{p}{2}) < y_n+\delta_n <  \eta(pM(m+1)+2-\frac{p}{2}).
\]
Hence 
\begin{equation}
    |\delta_n| < \eta(pM(m+1)+2-\frac{p}{2})-\eta(pmM+2-\frac{p}{2})
\end{equation}
whence
\begin{align*}
    \left|\sum_{m=kN+1}^{kN+N} \sum_{n=mM+1}^{mM+l_m} \delta_n\right| & \le \sum_{m=kN+1}^{kN+N} l_m \left(\eta(pM(m+1)+2-\frac{p}{2})-\eta(pmM+2-\frac{p}{2})\right)\\
      & \le M\left(y_{M(kN+N)}-y_{M(kN+1)-1}\right).
\end{align*}
Now, choose all the points of $\Gamma'$ in the annulus $B_{kN+1}$. Then for  \[kNM+jM + l_{kN+j} + 1 < n \le kNM + (j+1)M,\quad 2 \le j\le N-1,\]
we have
\[
   y_{kNM+jM + l_{kN+j} + 1} < y_n \le y_{kNM + (j+1)M}
\]
and the corresponding $\sigma_n\in\cC_{kN+1,M}$ and then
\[
y_{(kN+1)M-1}<\eta(p(kN+1)M+2-\frac{p}{2}) < y_n+\delta_n <  \eta(pM(kN+2)+2-\frac{p}{2})\le y_{M(kN+2)}.
\]
Thus
\[
\delta_n = \delta_n+y_n-y_n\le y_{M(kN+2)}-y_{(kN+j)M+l_{kN+j}+1}\le y_{M(kN+2)}-y_{(kN+j)M+1}.
\]
Recall that we have at least $N$ free point and hence 
\begin{align*}
 \sum_{j=2}^{N-1}\sum_{n=kNM+jM+l_{kN+j}+1}^{kNM+(j+1)M} \delta_n & \le N \sum_{j=2}^{N-1} (y_{M(kN+2)}-y_{(kN+j)M+1})\le N(y_{M(kN+2)}-y_{(kN+N-1)M+1}).
\end{align*}
With this choice, and for for large $N\gg M$. we obtain
\begin{align*}
 \sum_{n=(kN+1)M+1}^{(kN+N+1)M}&\delta_n = \sum_{m=kN+1}^{kN+N}\left(\sum_{n=mM+1}^{mM+l_m}\delta_n+\sum_{n=mM+l_m+1}^{(m+1)M}\delta_n \right)\\
    & \le N(y_{M(kN+2)}-y_{(kN+N-1)M+1}) +  M\left(y_{M(kN+N)}-y_{M(kN+1)-1}\right)<0.
\end{align*}

Now, if instead we choose the terms of $\Gamma'$ in the annulus $B_{kN+N}$, we obtain for every $kNM + jM + l_{kN+j} + 1 < n \le kNM + (j + 1)M$, $0 \le j\le N-3,$ 
that 
\begin{equation}
y_{kNM + jM + l_{kN+j} + 1} < y_n \le y_{kNM + (j + 1)M}.
\end{equation}
Moreover
\begin{equation}
 y_{(kN+N)M-1} < y_n+\delta_n \le y_{M(kN+N+1)} .  
\end{equation}
Hence
\begin{align*}
    \delta_n = \delta_n+y_n-y_n & \ge y_{(kN+N)M-1}-y_{kNM + (j + 1)M}.
\end{align*}
This ensures that
\begin{align*}
    \sum_{n=(kN+1)M+1}^{(kN+N+1)M}\delta_n & = \sum_{m=kN+1}^{kN+N}\left(\sum_{n=mM+1}^{mM+l_m}\delta_n+\sum_{n=mM+l_m+1}^{(m+1)M}\delta_n \right)\\
    & \ge \sum_{m=kN+1}^{kN+N}\sum_{n=mM+l_m+1}^{(m+1)M}\delta_n -M\left(y_{M(kN+N)}-y_{M(kN+1)-1}\right)\\
    & \ge \sum_{j=1}^{N-3}\sum_{n=kNM+jM+l_{kN+j}+1}^{kNM+(j+1)M}\delta_n - O\left(M\left(y_{M(kN+N)}-y_{M(kN+1)-1}\right)\right)\\
    & \ge (N-3)\left(y_{(kN+N)M}-y_{kNM+2M}\right)-O\left(M\left(y_{M(kN+N)}-y_{M(kN+1)-1}\right)\right)>0.
\end{align*}
It  follows that the $\gamma_n$ may be chosen so that
\[
\left|\sum_{n=(kN+1)M+1}^{(kN+N+1)M}\ \delta_n\right| < 1= \frac{y_{(kN+2)M}-y_{(kN+1)M}}{y_{(kN+2)M}-y_{(kN+1)M}}\le (y_{(kN+2)M}-y_{(kN+1)M}).
\]

\subsection*{Proof of \eqref{b}.}
Suppose that $\cD^-_\varphi(\Gamma)>\frac{1}{p}$, then there exists a large positive integer $M$ such that 
\[
n_{\Gamma}(\cC(r,r+pM)) > \frac{pM}{p} = M,\quad r>0.
\]
This ensures that for every integer $m$, the annulus $\cC_{m,M}$ contains at least $M+1$ points from $\Gamma$. Recall that it contains $M$ points from $\Sigma_p.$ Take $N$ such that $N\gg M$ and choose $j_0$ an integer so that $3j_0M< N \le 3(j_0+1)M$. 

For $j_0 \le j \le N-j_0$ and $kNM + jM + 1 \le n\le kNM + (j + 1)M$ we choose in an arbitrary way $\gamma_n\in\Gamma\cap \cC_{j,M}$ and write them as
$\gamma_n=\sigma_ne^{\delta_n}e^{i\theta_n}.$
Then 
\begin{equation}
    |\delta_n| < \eta(pM(j+1)+2-\frac{p}{2})-\eta(pjM+2-\frac{p}{2})
\end{equation}
and 
\begin{align*}
   \left| \sum_{j=j_0}^{N-j_0}\sum_{n=kNM + jM + 1}^{kNM + (j+1)M} \delta_n\right| & \le \sum_{j=j_0}^{N-j_0}M\left(\eta(pM(j+1)+2-\frac{p}{2})-\eta(pjM+2-\frac{p}{2})\right)\\
   & \le M\left(y_{M(N-j_0+1)}-y_{Mj_0-1}\right).
\end{align*}
On the other hand, recall that we still have $N$ free points from the sequence $\Gamma$ in each $\cC_{m,M}$. We now consider two possible choices of $\gamma_n$ for the remaining indices, namely for  
$$kNM + 1 \le n \le kNM + j_0M$$ and  $$kNM + (N-j_0)M + 1 \le n\le kNM + NM.$$
\medskip
\noindent
{\bf $\bullet$ For the first choice.} Assign  points $\gamma_n\in\Gamma\cap\cC_{j,M}$ to $kNM+jM+1\le n \le kNM+(j+1)M$ and $N-j_0 \le j \le N-1$. 
On the other hand, since we have $N/3>j_0M$ free points in $\displaystyle\cup_{j>2N/3}\cC_{j,M}$, we choose $j_0M$ points $\gamma_n$ in $\displaystyle\cup_{j>2N/3}\cC_{j,M}$, for $kNM + 1 \le n\le kNM +j_0M$. Then,
\begin{align*}
    \delta_n & = y_n+\delta_n-y_n \le y_{M(j+1)} -y_{kNM + 1} \le y_{M(N+1)} -y_{kNM + 1}.
\end{align*}
Hence 
\begin{align*}
    \sum_{n=(kN+1)M+1}^{(kN+N+1)M}\delta_n & = \sum_{j=0}^{j_0-1}\sum_{n=kNM+jM+1}^{kNM+(j+1)M}\delta_n + \sum_{j=j_0}^{N-1}\sum_{n=kNM+jM+1}^{kNM+(j+1)M}\delta_n\\
    & \le j_0M(y_{M(N+1)}-y_{kNM+1}) + O\left(M\big(y_{M(N-j_0+1)-y_{Mj_0+1}}\big)\right) <0.
\end{align*}

\medskip
\noindent
$\bullet$ \textbf{Second choice.} Choose $\gamma_n \in \Gamma \cap \cC_{j,M}$ for
\[
kNM + jM + 1 \le n \le kNM + (j + 1)M, \qquad 0 \le j \le j_0 - 1,
\]
and select the remaining $\gamma_n$ so that $\gamma_n \in \bigcup_{j < N/3} \cC_{j,M}$ whenever
\[
kNM + (N - j_0)M + 1 \le n \le kNM + NM.
\]
Arguing as above, we obtain that the corresponding sum of the $\delta_n$ is positive.
This completes the proof of \eqref{b}, as in the proof of \eqref{a}.

\subsection*{Proof of \eqref{c} and \eqref{d}}

It suffices to observe that the lower and upper $\varphi$-densities of a sequence $\Gamma$ can be expressed as
\begin{equation*}
\begin{aligned}
\cD_\varphi^-(\Gamma) &= \liminf_{R\rightarrow\infty}\ \inf_{r\geq 0}\ \frac{n_\Gamma\bigl(\cC(e^{r},e^{r+R})\bigr)}{R \, \psi''(r)},\\[1mm]
\cD_\varphi^+(\Gamma) &= \limsup_{R\rightarrow\infty}\ \sup_{r\geq 0}\ \frac{n_\Gamma\bigl(\cC(e^{r},e^{r+R})\bigr)}{R \, \psi''(r)}.
\end{aligned}
\end{equation*}

The result then follows by applying Lemma \ref{Lemma 5.1} and comparing the density of $\Gamma$ with that of the corresponding reference sequence. 

If $\Gamma$ is an interpolating sequence for $\cF^p_\varphi$, consider the reference sequence $\Sigma_{(1+\varepsilon)\varphi,2}$, which is a sampling sequence for $\cF^p_{(1+\varepsilon)\varphi}$ with density
\[
\cD_\varphi^+(\Sigma_{(1+\varepsilon)\varphi,2}) = \cD_\varphi^-(\Sigma_{(1+\varepsilon)\varphi,2}) = \frac{1+\varepsilon}{p}.
\]
Then, by Lemma \ref{Lemma 5.1} and comparing the densities of interpolating and sampling sequences, we obtain 
\[
\cD_\varphi^+(\Gamma) \le \frac{1}{p}.
\]

Similarly, if $\Gamma$ is a sampling sequence for $\cF^p_\varphi$, consider the reference sequence $\Sigma_{(1-\varepsilon)\varphi,2}$, which is an interpolating sequence for $\cF^p_{(1-\varepsilon)\varphi}$ with density
\[
\cD_\varphi^+(\Sigma_{(1-\varepsilon)\varphi,2}) = \cD_\varphi^-(\Sigma_{(1-\varepsilon)\varphi,2}) = \frac{1-\varepsilon}{p}.
\]
Then, again by Lemma \ref{Lemma 5.1} and comparing densities, we obtain 
\[
\cD_\varphi^-(\Gamma) \ge \frac{1}{p}.
\]
{This completes the proof} of Theorem \ref{densitythm}.

\subsection*{Acknowledgements} The authors would like to thank Sasha Borichev for his helpful remarks and suggestions. The first author  was partially supported by ANR Project ANR-24-CE40-5470.

\end{document}